\def\bN {\mathbf{N}}
\def\bQ {\mathbf{Q}}
\def\bR {\mathbf{R}}
\def\fH {\mathfrak{H}}
\def\\fH {\mathcal{H}}
\def\cP {\mathcal{P}}
\def\\hbar {{\\hbarilon}}
\def\si {{\sigma}}
\newcommand{\Lip}{\operatorname{Lip}}
\newcommand{\ba}{\begin{aligned}}
\newcommand{\ea}{\end{aligned}}
\newcommand{\be}{\begin{equation}}
\newcommand{\ee}{\end{equation}}
\newcommand{\bea}{\begin{eqnarray}}
\newcommand{\eea}{\end{eqnarray}}
\newcommand{\MKd}
{W_2}
\newcommand{\MKu}
{W_1}
\newcommand{\MKp}{W_p}
\newtheorem{Thm}{Theorem}[section]
\newtheorem{Rmk}[Thm]{Remark}
\newtheorem{Prop}[Thm]{Proposition}
\newtheorem{Cor}[Thm]{Corollary}
\newtheorem{Lem}[Thm]{Lemma}
\newtheorem{Def}[Thm]{Definition}
\renewcommand{\hbar}{{}}
\def\bN {\mathbf{N}}
\def\bQ {\mathbf{Q}}
\def\bR {\mathbf{R}}
\def\cP {\mathcal{P}}
\newcommand{\gen}{\gamma}
\newcommand{\vla}{\nu}
\newcommand{\vlau}{\mathbf v}
\newcommand{\ta}{\tau}
\newcommand{\hist}[2]{[#1]^{\leq #2}}
\newcommand{\histk}[2]{[#1]_k^{\leq #2}}
\newcommand{\vlav}[3]{\vlau(\hist{#3}{#1},#2)}
\newcommand{\vlavk}[3]{\vlau(\histk{#3}{#1},#2)}
\newcommand{\vlaun}{\mathbf \Gamma}
\newcommand{\histn}[2]{[(#1)^{\otimes N}]^{\leq #2}}
\newcommand{\histkn}[2]{[(#1)^{\times N}]_k^{\leq #2}}
\newcommand{\vlavn}[3]{\vlaun((\histn{#3}{#1})_{N;1},#2)}
\newcommand{\vlavkn}[3]{\vlaun^k((\histkn{#3}{#1})_{N:1},#2)}
\newcommand{\vlavkni}[3]{\vlaun^k_i((\histkn{#3}{#1})_{N:1},#2)}
\newcommand{\rhoi}{\rho^{in}}
\newcommand{\rhoin}{(\rhoi)^{\otimes N}}
\newcommand{\macphin}{\varphi^{in}}
\newcommand{\rc}{{\tau_c}}
\newcommand{\rod}{\tau}
\begin{document}

\title[The Mean-Field limit for  hybrid  models]{\Large The Mean-Field limit for  hybrid  models of collective motions with chemotaxis,}

\author[R. Natalini]{Roberto Natalini}
\address[R.N]{IAC, Via dei Taurini, 19, 00185 Roma RM Italy}
\email{roberto.natalini@cnr.it}
\author[T. Paul]{Thierry Paul}
\address[T.P.]{CNRS \& LJLL Sorbonne Université 4 place Jussieu 75005 Paris, France France}
\email{thierry.paul@upmc.fr}


\begin{abstract}
\Large
In this paper we study a general class of hybrid mathematical models of collective motions of cells under the influence of chemical stimuli. The models are hybrid in the sense that cells are discrete entities given by ODE, while the chemoattractant is considered as a continuous signal which solves a diffusive equation. For this model we prove the mean-field limit  in the Wasserstein distance to a system given by the coupling of a Vlasov-type equation with the chemoattractant equation. Our approach and results are  not based on empirical measures, but rather on marginals of large number of individuals densities, and we show the limit with explicit bounds, by proving also existence and uniqueness for the limit system. In the monokinetic case we derive new pressureless nonlocal Euler-type model with chemotaxis.

\end{abstract}
\LARGE
\maketitle

\tableofcontents


\section{Introduction}\label{intro}

A collective motion occurs when the behaviour of a group of individuals is dominated by the mutual interaction between them. This behaviour arises in many different contexts both for non-living and living systems, as for instance nematic fluids, simple robots, bacteria colonies, flocks of birds, schools of fishes, human crowds, see for instance \cite{vicsek-collective}. In a nutshell, all microscopic mathematical models of collective motion are based on one or more of the following elementary mechanisms: \emph{alignment}, see \cite{vicsek}, \cite{CS1}, and references therein, \emph{separation} and \emph{cohesion} \cite{dorsogna,strombom}. Concerning alignment models, a popular one is represented by the Cucker-Smale model \cite{CS1}, which was originally proposed to describe the dynamics in  flocks of birds, but then it was extended to cover more general phenomena, as for instance animal herding \cite{couzin-leader}. The hypothesis underlying to the Cucker-Smale model is that the force acting on every individual is a weighted average of the differences of its velocity with those of the others, and this force decays when the mutual distance between the individuals increases. Some preliminary analytical results about the time asymptotic behaviour of the model has been proven in  \cite{CS1,haliu}, and then a lot of papers investigated the behaviour of this dynamical model in many directions, see for instance \cite{carr3} and \cite{rt} for a comprehensive list of references.

In recent years, there was a lot of interest about collective motion of cells driven by chemical stimuli, see \cite{szabo,belmonte,arboleda,sepulveda,joie,colin,scianna}, and the reviews \cite{hatzikirou, vicsek-collective-cell}. Focusing on the family of Cucker-Smale models, in \cite{dicostanzo} a model for the morphogenesis in the zebrafish lateral line primordium was proposed, where a Cucker-Smale model was coupled with other cell mechanisms (chemotaxis, attraction-repulsion, damping effects) to describe the formation of neuromasts, see \cite{gilmour06, gilmour08} for the experimental basis of this model. The description of the cell behaviour is hybrid: while particles are considered discrete entities, endowed with a radius $R$ describing their circular shape, the chemical signal $\varphi$ is supposed to be continuous and its time derivative is equal to a diffusion term, a source term depending on the position of each particle, and a degradation term.  A simplified version of the model in  \cite{dicostanzo} was proposed in \cite{dicostanzo2} to allow a full analytical investigation. This simplified model reads as follows:

\begin{align}\label{sys-cuck-chemo}
\left\{
	\begin{array}{l}
	\dot{x}_i=v_i,\\
	\dot{v}_i=\displaystyle\frac{\beta}{N}\displaystyle\sum_{j=1}^N\frac{1}{\left(1+\frac{\left\|x_i-x_j\right\|^2}{R^2}\right)^{\sigma}}(v_j-v_i)+\eta \nabla_x \varphi (x_i),\\
	\partial_t \varphi=D\Delta \varphi - \kappa \varphi+ \displaystyle f(x,X(t) ),
	\end{array}
	\right.
\end{align}

Initial data are given by initial position and velocity for each particle:
\begin{align*}
	X(0)&=X_0,\quad 	 V(0)=V_0,
\end{align*}
with $X=\left(x_1,\dots,x_N\right)$, $V=\left(v_1,\dots,v_N\right)$, and by the initial concentration of signal, that it is assumed
\begin{align}\label{iniz-phi}
	\varphi(x,0):=\varphi_0=0.
\end{align}
Here $x_i,v_i$ are the position and velocity of the i-th cell and $\varphi$ stands for a generic chemical signal produced by the cells themselves and such that the cells are attracted towards the direction where $\nabla_x \varphi$ is growing. For this simple model in \cite{dicostanzo2} a full analytical theory was developed in the two-dimensional case with a fixed but arbitrary number $N$ of particles, and results of globally in time existence and uniqueness of solutions were proved, as well as the time-asymptotic linear stability. Other analytical results, for more general hybrid models, can be found in \cite{mp}.

 {In this paper we aim to prove the mean-field limit  for a general class of models including \eqref{sys-cuck-chemo} towards   Vlasov type kinetic  equations, together with the hydrodynamic mean-field limit of such models towards Euler type equations, coupled with chemotaxis.
To our knowledge, both limits, and the related   kinetic and Euler equations, and a fortiori their rigorous derivation,  are new in the literature.

Let us describe the class of particle systems we will handle in the present article.

Consider on $\bR^{2dN}\ni 
((x_i(t))_{i=1,\dots,N},(v_i(t))_{i=1,\dots,N}):=(X(t),V(t))$ the following vector field
\be\label{eq1}
\left\{
\begin{array}{l}
\dot {x_i}(t)=v_i\\
\dot {v_i}(t)=F_i(t,X(t),V(t))
\end{array}
 \ 
 i=1,\dots,N,\ 
 (X(0),V(0))=(X^{in},V^{in}):
\right.
\ee
}
where 
\be\label{defG}
F_i(t,X,V)=\frac1N\sum_{j=1}^N\gamma(v_i-v_j,x_i-x_j)+\eta
\nabla_{x}\varphi^t(x_i)
+F_{ext}(x_i),
\ee
$\gen$ is the collective interaction function, $F_{ext}$ is an external force  and $\varphi$ satisfies the equation
\be\label{defeqphi0}
\partial_s\varphi^s(x)=D\Delta_x\varphi-\kappa\varphi +f(x,X(s)),
\ s\in[0,t],\ \varphi^{s=0}=\macphin
\ee 
for some $\kappa, D,\eta\geq0$ and function $f$ of the form
\be\label{deff}
f(x,X)=\frac1N\sum_{j=1}^N\chi(x-x_i),\ \ \chi\in\mathcal C_c^1.
\ee

The  function $\gamma:\bR^d\times \bR^b\to\bR\times\bR^d$ is supposed to be Lipschitz continuous\footnote{
{through this paper we define $\Lip(f)$ for $f:\ \bR^{n}\to\bR^{m},m,n\in\bN,$ as $\Lip(f):=
\sqrt{\sum\limits_{i=1}^m(\Lip(f_i)^2}$.}}.

The case $\gamma(y,w)=\psi(y)w.\ F=\varphi=0$, $\psi$ bounded Lipschitz, covers the standard case of Cucker-Smale models.

\vskip 1cm
For any fixed function $\macphin$ and any  $t,N$ we  define the mapping $\Phi^t_N=\Phi^t$  by
\be\label{defphitN}
\left\{
\begin{array}{rrcl}
\Phi^t_N:&\bR^{2dN}&\longrightarrow&\bR^{2dN}\\
&Z^{in}=(X^{in},V^{in})&\longrightarrow& Z(t)=(X(t),V(t))\mbox{ solution of $(\ref{eq1},\ref{defG},\ref{defeqphi0},\ref{deff})$.}
\end{array}
\right.
\ee
Note that $\Phi^t_N$ is not a  flow.
\vskip 1cm
We would like to derive a kinetic model corresponding to  system $(\ref{eq1},\ref{defG},\ref{defeqphi0},\ref{deff})$, that is the one particle (non-linear) PDE satisfied by the first marginal of the push-froward\footnote{We recall that the pushforward of a measure $\mu$ by a measurable function $\Phi$ is $\Phi\#\mu$ defined by $\int \varphi d(\Phi\#\mu):=\int (\varphi\circ \Phi)d\mu$ for every test function $\varphi$.} $\Phi^t\#\rho^{in}$ where 
$\rho^{in}\in\mathcal P(\bR^{2dN})$, the space of probability measures on $\bR^{2dN}$ and $\Phi^t_N$ is the mapping defined by \eqref{defphitN}.
\vskip 0.3cm
The first difficulty is the fact that $\rho^t_N:=\Phi^t_N\#\rho^{in}_N$ does not satisfy a closed PDE, except if $\rho^{in}_N=\rho_{\bar Z}$ where
\be\label{ansatz}
\rho_{\bar Z}:=\frac1{N!}\sum_{\Sigma\in\Sigma_N}\delta_{\sigma(\bar Z)}, \ \bar Z:=(\bar X,\bar V)\in\bR^{2dN}.
\ee 
Here $\Sigma_N$ is the group of permutations  of $N$ elements and 
\be\label{defperm}
\sigma(\bar Z)=\sigma(\bar X,\bar V):=(\bar x_{\sigma(1)},\dots,\bar x_{\sigma(N)},\bar v_{\sigma(1)},\dots,\bar v_{\sigma(N)}).
\ee

In this case $\rho^t_N:=\rho_{\Phi^t(\bar Z)}$  satisfies 
\be\label{liouvansatz}
\partial_t\rho^t_N+V\cdot\nabla_X\rho^t_N=
 \sum_{i=1}^N\nabla_{v_i}\cdot G_i\rho^t_N
\ee
where 
\be\label{defG2}
G_i(t,X,V)=\frac1N\sum_{j=1}^N\gamma(v_i-v_j,x_i-x_j)+\eta\nabla_{x}\Psi^t(x_i)
+F_{ext}(x_i),
\ee
 $\Psi$ (and therefore $G_i$ too)  depends on the solution $\rho^t_N$ and satisfies the equation
\be\label{defeqphi}
\partial_t\Psi^t(x)=D\Delta_x\Psi-\kappa\Psi +f(x,\rho^s_{N;1}),\ s\in[0,t],
\ee 
with $g$ given by
\be\label{fansatz}
f(x,\rho^s_{N;1})=
\int_{\bR^{2d}}\chi(x-y)\rho^s_{N;1}(y,\xi)dyd\xi,
\ \
\ee
where, denoting $\Phi^t_N(\bar Z)=(\bar x_1(t),\dots,\bar x_N(t),\bar v_1(t),\dots,\bar v _N(t))$,
\begin{eqnarray}
&&\rho^s_{N;1}(y,\xi)\nonumber\\
&:=&\int_{\bR^{2d(N-1)}}
\rho^t_N(y,x_2,\dots,x_N,\xi,v_2,\dots,v_N)dx_2\dots dx_N dv_2\dots dv_N\nonumber\\
&=&\int_{\bR^{2d(N-1)}}
\rho_{\Phi^t_N(\bar Z)}(y,x_2,\dots,x_N,\xi,v_2,\dots,v_N)dx_2\dots dx_N dv_2\dots dv_N \ 
\mbox{(see Lemma \ref{deltempir} below)}
\nonumber\\
&=&
\tfrac1N\sum_{i=1}^N\delta(y-\bar x_i(t))\delta(\xi-\bar v_i(t))=:\mu_{\Phi^t_N(\bar Z)}
\ \ \ \ \ \ \mbox{ (see Lemma \ref{deltempir} below)}.
\nonumber
\end{eqnarray}
In turn, this suggests that the (non local in time) Vlasov equation associated to the particle system $(\ref{eq1},\ref{defG},\ref{defeqphi0},\ref{deff})$ is 
\be\label{eq1V}
\partial_t\rho^t+v\cdot\nabla_x\rho^t=\nabla_v(
\vla(t,x,v)\rho^t),\ \rho^{0}=\rhoi
\ee
where
\be\label{defGV}
\vla(t,x,v)=\gamma*\rho^t(x,v)+\eta\nabla_x\psi^t(x)+F_{ext}(x)
\ee
and  $\psi^s$ satisfies
\be\label{defeqphiV}
\partial_s\psi^s(x)=D\Delta_x\psi-\kappa\psi^s+g(x,\rho^s),\ \psi^0=\macphin
.
\ee
with
\be\label{defgvlasov}
g(x,\rho^s)=
\int_{\bR^{2d}}\chi(x-y)\rho^s(y,\xi)dyd\xi.
\ee

%

{The kinetic equation associated to Cucker-Smale systems, introduced in \cite{ht}, has been derived in \cite{haliu,hakimzhang} and, for generalizations  of type  \eqref{eq1} with $\varphi=0$ in \cite{rt}, but, in all these papers, {\bf without chemotaxis interaction}. We refer to \cite{hakimzhang,rt} for a large bibliography on the subject.}

Let us finish this section by recalling  the three following dynamics involved in this paper, denoted by (P) for \textit{(Particles)}, (LV) for \textit{(Liouville-Vlasov)} and (V) for \textit{(Vlasov)} and the strategy adopted in the proof of the main results: 
$$
\begin{array}{cl}
(P)& \left\{ 
\begin{array}{l}
\dot {x_i}=v_i,\  
\dot {v_i}=F_i(t,X(t),V(t)),\ 
(X(0),V(0))=Z(0)=Z^{in}\in\bR^{2dN} 
\\ \\
F_i(t,X,V)=\frac1N\sum\limits_{j=1}^N\gamma(v_i-v_j,x_i-x_j)+\eta\nabla_{x}
\varphi^t(x)_i
+F_{ext}(x_i),\\ \\
\partial_s\varphi^s(x)
=D\Delta_x\varphi-\kappa\varphi +f(x,X(s)),\ s\in[0,t],,\ \varphi^0=\macphin, \\\ \\
f(x,X)=\frac1N\sum\limits_{j=1}^N\chi (x-x_j);
\end{array} 
\right.
\nonumber
\\ \\ \\
(LV)& \left\{
\begin{array}{l}
\partial_t\rho^t_N+V\cdot\nabla_X\rho^t_N=
 \sum\limits_{i=1}^N\nabla_{v_i}\cdot G_i\rho^t_N,\ \rho^o_N=\rho_N^{in}=(\rho^{in})^{\otimes N}\in\cP(\bR^{2dN})\\ \\
G_i(t,X,V)=\frac1N\sum\limits_{j=1}^N\gamma(v_i-v_j,x_i-x_j)+\eta\nabla_{z}
\Psi^t(x_i)
+F_{ext}(x_i),
\\ \\
\partial_s\Psi^s(x)=D\Delta_x\Psi-\kappa\Psi +g(x,\rho^s_{N;1}),\ s\in[0,t],\ \Psi^0=\macphin,
\\ \\
g(x,\rho^s_{N;1})=
\chi*\rho^s_{N;1}(x);
\end{array}
\right.
\nonumber\\ \\ \\
(V)& \left\{
\begin{array}{l}
\partial_t\rho^t+v\cdot\nabla_x\rho^t=\nabla_v(
\vla(t,x,v)\rho^t),\ \rho^0=\rho^{in}\in\cP(\bR^{2d})\\ \\
\vla(t,x,v)=\gamma*\rho^t(x,v)+\eta\nabla_x\psi^t(x)+F_{ext}(x),\\ \\
\partial_s\psi^s(x)=D\Delta_x\psi-\kappa\psi+g(x,\rho^s),\ \psi^0=\macphin.\\ \\ 
g(x,\rho^s)=
\chi*\rho^s(x).
\end{array}
\right.\nonumber
\end{array}
$$
Note that
$
(\chi*\rho_{N;1}(t))(x)
=
(\widetilde \chi*\rho)(x,\dots,x),\ \ \ \widetilde \chi(X)=\frac1N\sum\limits_{j=1}^N\chi(x_j).
$
\vskip 1cm
The strategy of our approach can be summarized by the following estimates that we will establish  in some Wasserstein topology, 
\begin{eqnarray}
\left\{
\begin{array}{rcll}
(\Phi_N^t\#\rho^{in}_N)_{N;1}&\sim& (\rho^t_N)_{N;1}(t),\ \ &\Phi_N^t\mbox{ solution of }(L),\ \rho_{N}^t\mbox{  of }(LV)\mbox{ with }\rho_N^0=\rho^{in}_N\\ 
\rho^{in}_N&=&(\rho^{in})^{\otimes N}\\ 
(\rho^t_N)_{N;1}&\sim&\rho^t,\ \ &\rho^t\mbox{ solution of }(V)\mbox{ with }\rho^0=\rho^{in},
\end{array}
\right.\nonumber
\end{eqnarray}
so that, by triangle inequality,
$$
(\Phi_N^t\#\rho^{in}_N)_{N;1}\sim \rho^t
$$
$  \mbox{ with,}  
\Phi_N^t\mbox{ solution of }(L)\mbox{ and } \rho^t\mbox{ solution of }(V)\mbox{ with }\rho^0=(\rho^{in}_N)_{N;1}.$

\section{The main results}

\begin{Thm}\label{mainthm}

Let $\rho^{in}$ be a compactly supported 
probability on $\bR^{2dN}$,  let $\Phi^t_N$ be the mapping generated by the particles system $(\ref{eq1},\ref{defG},\ref{defeqphi0},\ref{deff})$ as defined by \eqref{defphitN},
 and let 
 $\tau_{\rho^{in}}$ be the function defined in formula \eqref{PtoVtau} below.
 
Then, for any $t\geq 0$,
$$
\MKd\left((\Phi^t_N\#(\rho^{in})^{\otimes N})_{N;1},\rho^t\right)^2\leq 
\tau_{\rho^{in}}(t)
\left\{
\begin{array}{ll}
N^{-\frac12}&d=1\\
N^{-\frac12}{\log{N}}&d=2\\
N^{-\frac1{d}}&d>2
\end{array}
\right.
$$
where $\rho^t$ is the solution of the   Vlasov equation 
$(\ref{eq1V},\ref{defGV},
\ref{defeqphiV},\ref{defgvlasov})$ with initial condition $\rho^{in}$  provided by Theorem \ref{thmvlasov} below and $\MKd$ is the quadratic Wasserstein distance whose definition is recalled in Definition \ref{defwas}.

Moreover, let us 
denote by $\varphi^t_{Z^{in}}$ the chemical density solution of $(\ref{eq1},\ref{defG},\ref{defeqphi0},\ref{deff})$ with initial data $(Z^{in},\macphin)$ and by $\psi^t_{\rhoi}$ the one solution of $(\ref{eq1V},\ref{defGV},
\ref{defeqphiV},\ref{defgvlasov})$ with initial data $(\rhoi,\macphin)$.

Then
$$
\int_{\bR^{2dN}}\|\nabla\varphi^t_{Z^{in}}-\nabla\psi^t_{\rhoi}\|^2_\infty
(\rhoi)^{\otimes N}(dZ^{in})
\leq 
\rc(t)
\left\{
\begin{array}{ll}
N^{-\frac12}&d=1\\
N^{-\frac12}{\log{N}}&d=2\\
N^{-\frac1{d}}&d>2
\end{array}
\right.
$$
where $\rc$
is defined below by \eqref{tauct}.

Finally, the functions $\tau(t),\rc(t)$ depend only on $t$, $\Lip(\gen),\Lip(\chi),\Lip(\nabla\chi)$, and the supports of $\Phi^t_N\#\rhoin$ and $\rho^t$, and satisfies the following estimate for all $t\in\bR$,
$$
\tau_{\rhoi}(t)\leq e^{e^{Ct}},\ \rc(t)\leq e^{e^{C_ct}}
$$
for some constants $C,C_c$,depending  on  $\Lip(\gen),\Lip(\chi),\Lip(\nabla \chi)$ and $|supp(\rhoi)|$.
\end{Thm}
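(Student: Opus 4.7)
The plan is to follow the triangle-inequality strategy sketched right after the display of $(P)$, $(LV)$, $(V)$, implemented through a synchronous coupling. On one probability space, I would draw $(Z_i^{in})_{i=1}^N=(X_i^{in},V_i^{in})_{i=1}^N$ iid from $\rhoi$, evolve them jointly as $(Z_i(t))_{i=1}^N=\Phi^t_N(Z^{in})$ under $(P)$, and simultaneously evolve $N$ independent copies $\tilde Z_i(t)$ of the single-particle characteristic ODE associated to $(V)$ starting from the same $Z_i^{in}$. Since the law of $Z_1(t)$ is $(\Phi^t_N\#(\rhoi)^{\otimes N})_{N;1}$ and that of $\tilde Z_1(t)$ is $\rho^t$, exchangeability yields
\[
\MKd\bigl((\Phi^t_N\#(\rhoi)^{\otimes N})_{N;1},\rho^t\bigr)^2 \le \mathbb{E}\,|Z_1(t)-\tilde Z_1(t)|^2 = \frac{1}{N}\sum_{i=1}^N\mathbb{E}\,|Z_i(t)-\tilde Z_i(t)|^2 =: \Delta_N(t),
\]
reducing the first assertion to a bound on $\Delta_N(t)$.

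The next step is to differentiate $\Delta_N(t)$ along the two flows and control the three resulting contributions. (a) The interaction term with $\gamma$, by Lipschitz continuity, yields $\Lip(\gamma)^2\,\Delta_N(t)$ plus a fluctuation of order $\mathbb{E}\,\MKd(\tilde\mu_N^t,\rho^t)^2$, obtained by inserting the empirical measure $\tilde\mu_N^t:=\frac{1}{N}\sum_i\delta_{\tilde Z_i(t)}$ between the particle average and the integral against $\rho^t$. (b) The external drift $F_{ext}$ contributes at most a term of order $\Delta_N(t)$. (c) The chemotaxis term $\eta(\nabla\varphi^t(x_i(t))-\nabla\psi^t(\tilde x_i(t)))$ is the delicate one: I would use Duhamel on \eqref{defeqphi0} and \eqref{defeqphiV}, distribute the gradient onto $\chi\in\mathcal C_c^1$, and invoke that the heat semigroup is an $L^\infty$-contraction to arrive at
\[
\|\nabla\varphi^t-\nabla\psi^t\|_\infty \le \int_0^t e^{-\kappa(t-s)}\bigl\|(\nabla\chi)\ast(\mu_N^{s,x}-\pi_x\rho^s)\bigr\|_\infty\,ds,
\]
where $\mu_N^{s,x}=\frac{1}{N}\sum_j\delta_{x_j(s)}$ and $\pi_x\rho^s$ is the spatial marginal of $\rho^s$. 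Kantorovich duality bounds the integrand by $\Lip(\nabla\chi)\,\MKu(\mu_N^{s,x},\pi_x\rho^s)$, itself dominated by $\sqrt{\Delta_N(s)}+\mathbb{E}\,\MKu(\tilde\mu_N^s,\rho^s)$ after inserting $\tilde\mu_N^s$ as intermediate measure.

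The main obstacle is the nested Grönwall required to close the loop: because the chemotaxis estimate feeds a time integral of $\sqrt{\Delta_N(s)}$ back into $\Delta_N'(t)$, one lands on an inequality of the form
\[
\Delta_N'(t) \le C\,\Delta_N(t) + C\int_0^t\Delta_N(s)\,ds + C\,\mathbb{E}\,\MKd(\tilde\mu_N^t,\rho^t)^2,
\]
which a single Grönwall iteration reduces to a simple exponential in $t$. The second exponential, producing the announced $\tau_{\rhoi}(t)\le e^{e^{Ct}}$, enters because the constant $C$ itself grows exponentially in $t$: it depends on the supports of $\rho^t$ and of the particle configuration, each propagated by Theorem \ref{thmvlasov} through a Grönwall on $\sup_i|Z_i(t)|$ driven by $\Lip(\gamma)$ and $\|\nabla\chi\|_\infty$. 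Finally, the fluctuation $\mathbb{E}\,\MKd(\tilde\mu_N^t,\rho^t)^2$ is estimated by the Fournier-Guillin concentration inequality applied to the iid samples $\tilde Z_i(t)\sim\rho^t$ in the phase space $\mathbb{R}^{2d}$; with exponent $p=2$ and dimension $2d$, the three regimes $p>d$, $p=d$, $p<d$ correspond exactly to $d=1$, $d=2$, $d>2$ and produce the rates $N^{-1/2}$, $N^{-1/2}\log N$, $N^{-1/d}$, the required finite moments being supplied by the compactness of $\mathrm{supp}(\rho^t)$ from Theorem \ref{thmvlasov}.

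The second assertion on $\int\|\nabla\varphi^t_{Z^{in}}-\nabla\psi^t_{\rhoi}\|_\infty^2\,(\rhoi)^{\otimes N}(dZ^{in})$ is essentially a byproduct of step (c): squaring the Duhamel-based bound and integrating over $Z^{in}\sim(\rhoi)^{\otimes N}$ produces an integral inequality of the same structure as the one driving $\Delta_N(t)$, whose resolution by the same nested Grönwall yields $\rc(t)\le e^{e^{C_c t}}$ with $C_c$ depending only on $\Lip(\gamma)$, $\Lip(\chi)$, $\Lip(\nabla\chi)$, and $|\mathrm{supp}(\rhoi)|$, as claimed.
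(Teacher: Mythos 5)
Your argument is correct in substance, but it follows a genuinely different route from the paper. You run a Sznitman-type synchronous coupling at the level of trajectories: iid initial data, the $N$-particle flow on one side, $N$ independent copies of the single-particle (McKean--Vlasov) characteristic ODE on the other, and a Gr\"onwall loop for $\Delta_N(t)=\tfrac1N\sum_i\mathbb{E}|Z_i(t)-\tilde Z_i(t)|^2$, with the fluctuation terms controlled by Kantorovich duality and Fournier--Guillin applied to the iid samples $\tilde Z_i(t)\sim\rho^t$ at time $t$. The paper never introduces this coupled ODE: it inserts the intermediate $N$-body Liouville--Vlasov system $(LV)$, proves an abstract propagation-of-Wasserstein estimate (Theorem \ref{thmpropwas}), compares $(\Phi^t_N\#(\rhoi)^{\otimes N})_{N;1}$ with $(\rho^t_N)_{N;1}$ and then $(\rho^t_N)_{N;1}$ with $\rho^t$ (Propositions \ref{ptolv} and \ref{lvtov}), uses the Dobrushin stability of $(V)$ and $(LV)$, applies Fournier--Guillin to the \emph{initial} empirical measures $\mu_{Z^{in}}$ rather than at time $t$, and gets the chemical bound by a further triangle inequality through $\psi^t_{\mu_{Z^{in}}}$. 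What your route buys: it is more elementary, bypasses the $(LV)$ machinery entirely, and delivers the chemical-density estimate as a direct byproduct of the same Gr\"onwall loop. What it costs, and what you should spell out: (i) you use that $\rho^t$ is the law of the characteristic flow; since the paper constructs $\rho^t$ by time discretization, this needs the (standard) uniqueness statement for the linear continuity equation with the frozen Lipschitz field $\vla(t,\cdot,\cdot)$ to identify $\rho^t$ with the pushforward; (ii) when differentiating $\Delta_N$, the chemotaxis difference $\nabla\varphi^t(x_i)-\nabla\psi^t(\tilde x_i)$ also produces the term $|\nabla\psi^t(x_i)-\nabla\psi^t(\tilde x_i)|$, so you need $\Lip(\nabla\psi^t)\le t\,\Lip(\nabla\chi)+\Lip(\nabla\macphin)$ (the analogue of the $2u\eta\Lip(\nabla\chi)$ contribution in the paper's $\bar L_{\rhoi}$). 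Neither point is a genuine gap; your dimension count for Fournier--Guillin in $\bR^{2d}$ (rates $N^{-1/2}$, $N^{-1/2}\log N$, $N^{-1/d}$) and the double-exponential bookkeeping via the exponential growth of the supports match the paper's conclusions.
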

\vskip 1cm
\begin{Cor}[Hydrodynamic Euler limit]\label{cormain}\ 
{
Let $\mu^{in},u^{in},\varphi^{in}$ be such that the non local  Euler system
\be\label{systhydro}
\left\{
\begin{array}{l}
\partial_t\mu^t+\nabla(u^t \mu^t) = 0\\
\partial_t(\mu^tu^t)+
\nabla(\mu^t(u^t)^{\otimes 2}) = 
\mu^t\int\gamma(\cdot-y,u^t(\cdot)-u^t(y))\mu^t(y)dy+
\eta\mu^t\nabla\psi^t
+\mu^tF\\
\partial_s\psi^s = D\Delta\psi-\kappa\psi +
\chi*\mu^s,\ s\in[0,t],
\\ 
(\mu^0,u^0,\psi^0) = (\mu^{in},u^{in},\varphi^{in})\in H^s,\ s>\tfrac d2+1.
\end{array}
\right.\nonumber
\ee
 has a unique solution $
\mu^t, u^t\in C([0,t];H^s)\cap C^1([0,T];H^{s-1}),\ 
\psi^t  \in C([0,t];H^{s})\cap C^1([0,T];H^{s-2}) \cap L^2(0,T; H^{s+1})$
%
and let 
}
$$
\rhoi=\mu^{in}(x)\delta(v-u^{in}(x)).
$$
Then, for any $t\in [ 0,T]$,
$$
\MKd\left((\Phi^t_N\#(\rho^{in})^{\otimes N})_{N;1},\mu^t(x)\delta(v-u^t(x))\right)^2\leq 
\tau(t)
\left\{
\begin{array}{ll}
N^{-\frac12}&d=1\\
N^{-\frac12}{\log{N}}&d=2\\
N^{-\frac1{d}}&d>2
\end{array}
\right.
$$
Moreover, 
$$
\int
_{\bR^{dN}}
\|\nabla\varphi^t_{(X^{in},u^{\otimes N}(X^{in}))}-\nabla\psi^t_{\rhoi}\|^2_\infty
(\mu^{in})^{\otimes N}(dX^{in})
\leq 
\rc(t)
\left\{
\begin{array}{ll}
N^{-\frac12}&d=1\\
N^{-\frac12}{\log{N}}&d=2\\
N^{-\frac1{d}}&d>2
\end{array}
.
\right.
$$
\end{Cor}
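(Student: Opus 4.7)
The plan is to reduce Corollary~\ref{cormain} to a direct application of Theorem~\ref{mainthm} by identifying the Vlasov solution $\rho^t$ provided by Theorem~\ref{thmvlasov} with the monokinetic measure $\mu^t(x)\delta(v-u^t(x))$. Once this identification is established, both inequalities in the corollary follow immediately from the two inequalities in Theorem~\ref{mainthm} applied to $\rhoi=\mu^{in}(x)\delta(v-u^{in}(x))$, which is compactly supported provided $\mu^{in}$ is.

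To carry out the identification, I would test the Vlasov equation \eqref{eq1V} against an arbitrary $f\in C^1_c(\bR^{2d})$ under the Ansatz $\rho^t(x,v)=\mu^t(x)\delta(v-u^t(x))$. Expanding both sides using the distributional chain rule for $\delta(v-u^t(x))$, selecting $f=f(x)$ independent of $v$ yields the continuity equation $\partial_t\mu^t+\nabla\cdot(\mu^t u^t)=0$, while selecting $f(x,v)=\phi(x)v_k$ and using the continuity equation yields the momentum equation, provided
\[
\vla(t,x,u^t(x))=\int\gen(u^t(x)-u^t(y),x-y)\mu^t(y)\,dy+\eta\nabla\psi^t(x)+F_{ext}(x),
\]
which is precisely \eqref{defGV} evaluated at $v=u^t(x)$ after substituting the monokinetic density into $\gen*\rho^t$. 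Likewise, $\chi*\rho^t(x)=(\chi*\mu^t)(x)$, so the chemoattractant equation \eqref{defeqphiV}--\eqref{defgvlasov} collapses to the one in \eqref{systhydro}. Uniqueness in Theorem~\ref{thmvlasov}, applied within the measure-valued class to which monokinetic probabilities belong, then forces $\rho^t=\mu^t(x)\delta(v-u^t(x))$ for all $t\in[0,T]$.

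With this identification in hand, the first bound of the corollary is exactly the first bound of Theorem~\ref{mainthm}. For the second, I would exploit the tensor-product structure
\[
(\rhoi)^{\otimes N}(dZ^{in})=\prod_{i=1}^N\mu^{in}(dx_i)\,\delta(v_i-u^{in}(x_i))\,dv_i,
\]
so that integrating out $V^{in}$ substitutes $V^{in}=u^{\otimes N}(X^{in})$ into $\varphi^t_{Z^{in}}$, converting the $(\rhoi)^{\otimes N}$-integral of Theorem~\ref{mainthm} into the stated $(\mu^{in})^{\otimes N}$-integral against the monokinetic initial data.

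The main obstacle I anticipate is the rigorous justification of the monokinetic identification in the presence of the non-local chemotactic coupling: well-posedness of the Vlasov system in the class of probability measures requires enough regularity of the effective drift $\vla$ to guarantee a Lipschitz characteristic flow, which in turn demands $\nabla\psi^t$ to be Lipschitz in $x$ uniformly in $t$. The regularity assumption $\psi^t\in C([0,T];H^s)\cap L^2(0,T;H^{s+1})$ with $s>\tfrac d2+1$ is imposed precisely to ensure that $\nabla\psi^t$ is at least $C^1$, which, together with the Lipschitz hypothesis on $\gen$, secures uniqueness in the measure-valued sense and validates the reduction.
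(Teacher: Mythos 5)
Your proposal is correct and follows essentially the same route as the paper: the paper's proof simply invokes Theorem \ref{hydro} (the monokinetic ansatz $\mu^t(x)\delta(v-u^t(x))$ solves the Vlasov system when $(\mu^t,u^t,\psi^t)$ solves the Euler--chemotaxis system) together with uniqueness from Theorem \ref{thmvlasov}, and then applies Theorem \ref{mainthm} to $\rhoi=\mu^{in}(x)\delta(v-u^{in}(x))$, exactly as you do. Your inline verification by testing against $f(x)$ and $f(x,v)=\phi(x)v_k$ is just the standard argument the paper delegates to Theorem \ref{hydro} and its references, and your remark on the integration over $V^{in}$ reproducing the $(\mu^{in})^{\otimes N}$-integral matches the paper's reading of the second bound.
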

\vskip 1cm
\begin{proof}
[Proof of Theorem \ref{mainthm}]

Clearly Theorem \ref{mainthm} links the dynamics of the particle system  $(\ref{eq1},\ref{defG},\ref{defeqphi0},\ref{deff})$ and the one driven by the Vlasov system $(\ref{eq1V},\ref{defGV},
\ref{defeqphiV},\ref{defgvlasov})$.
As an intermediate step 
we will consider the  $N$-body Liouville type one defined by $(\ref{liouvansatz},\ref{defG2},\ref{defeqphi},\ref{fansatz})$.

We will  proceed in several steps.
\vskip 0.3cm
Step $1$ \textit{[Section \ref{secptolv}]}: we will show that
the marginal $(\Phi^t_N\#(\rho^{in})^{\otimes N})_{N;1}$  of the pushforward of the initial condition by the flow generated by the particle system $(\ref{eq1},\ref{defG},\ref{defeqphi0},\ref{deff})$ and the marginal $(\rho^t_{N;1})$ of the solution $\rho^t_N$ of $(\ref{liouvansatz},\ref{defG2},\ref{defeqphi},\ref{fansatz})$ are close as $N\to\infty$ in the same Wasserstein topology
through an estimate for
$
\MKd((\Phi^t_N\#(\rho^{in})^{\otimes N})_{N;1},(\rho^t_N)_{N;1})
$.
\vskip 0.3cm
Step $2$ \textit{[Section \ref{seclvtov}]} we will show that
the marginal $(\rho^t_{N;1})$ of the solution $\rho^t_N$ of $(\ref{liouvansatz},\ref{defG2},\ref{defeqphi},\ref{fansatz})$, is close to the solution of  a Vlasov type closed equation $(\ref{eq1V},\ref{defGV},
\ref{defeqphiV},\ref{defgvlasov})$ derived below in Wasserstein metric
by estimating
$
\MKd((\rho^t_N)_{N;1},\rho^t).
$
\vskip 0.3cm
Step $3$:
\textit{[particle density]}:
 we will use the triangular inequality for $\MKd$:
$$
\MKd((\Phi^t_N\#(\rho^{in})^{\otimes N})_{N;1},\rho^t)\leq 
\MKd((\Phi^t_N\#(\rho^{in})^{\otimes N})_{N;1},(\rho^t_N)_{N;1})
+
\MKd((\rho^t_N)_{N;1},\rho^t).
$$
The first part of Theorem \ref{mainthm} is then  given by the estimate given by Proposition \ref{ptolv} - namely  $\MKd((\Phi^t_N\#(\rho^{in})^{\otimes N})_{N;1},(\rho^t_N)_{N;1})\leq \beta_{\rho^{in}}(t)\sqrt{C_d(N)}$ -  and the one given by Proposition \ref{lvtov} - namely  $\MKd((\rho^t_N)_{N;1},\rho^t)\leq \frac{\alpha_{\rho^{in}}(t)}{\sqrt N}$ . 
\vskip 0.3cm
Step $4$:
\textit{[chemical density]}:
the chemical density estimate is obtained through the triangle inequality. We get  
\begin{eqnarray}
\|\nabla\varphi^t_{Z^{in}}-\nabla\psi^t_{\rhoi}\|_\infty^2
&\leq& \left(\|\nabla\varphi^t_{Z^{in}}-\nabla\psi^t_{\mu^{Z^{in}}}\|_\infty+\|\nabla\psi^t_{\mu_{Z^{in}}}-\nabla\psi^t_{\rhoi}\|_\infty\right)^2\nonumber\\
&\leq&
2
\left(\|\nabla\varphi^t_{Z^{in}}-\nabla\psi^t_{\mu^{Z^{in}}}\|_\infty^2+\|\nabla\psi^t_{\mu_{Z^{in}}}-\nabla\psi^t_{\rhoi}\|_\infty^2\right),\label{eq3}
\end{eqnarray}
where $\mu_{Z^{in}}:=\frac1N\sum\limits_{l=1}^N\delta_{z^{in}_l}$ and $\psi^t_{\rhoi}$ solves  $(\ref{liouvansatz},\ref{defG2},\ref{defeqphi},\ref{fansatz})$ with initial condition $\rhoin=\mu_{Z^{in}}^{\otimes N}$.

 Both terms  in the right hand-side of \eqref{eq3} are estimated by  Corollary \ref{corder}:
 \begin{eqnarray}
 \|\nabla\varphi^t_{Z^{in}}-\nabla\psi^t_{\mu^{Z^{in}}}\|_\infty^2
 &\leq& t^2\Lip(\nabla\chi)^2\MKd((\Phi^t_N\#(\mu_{Z^{in}})^{\otimes N})_{N:1},\rho^t_{\mu_{Z^{in}}})^2\label{eq11}\\
 \|\nabla\psi^t_{\mu_{Z^{in}}}-\nabla\psi^t_{\rhoi}\|_\infty^2&\leq&
 t^2\Lip(\nabla\chi)^2\MKd(\rho^t_{\mu_{Z^{in}}},\rho^t_{\rho^{in}})^2, \label{eq2}
 \end{eqnarray}
 where $\rho^t_{\mu_{Z^{in}}}$ (resp. $\rho^t_{\rho^{in}}$) is the solution of the Vlasov equation with initial condition ${\mu_{Z^{in}}}$ (resp. $\rho^{in}$).

   $\MKd((\Phi^t_N\# (\mu_{Z^{in}})^{\otimes N},\rho^t_{\mu_{Z^{in}}})^2$ is estimated by the first estimate of  Theorem \ref{mainthm} we just proved in Step $3$ - namely 
   $\MKd((\Phi^t_N\# \mu_{Z^{in}},\rho^t_{\mu_{Z^{in}}})^2\leq 
\tau_{\mu_{Z^{in}},}(t)
{\scriptsize \left\{
\begin{array}{ll}
N^{-\frac12}&d=1\\
N^{-\frac12}{\log{N}}&d=2\\
N^{-\frac1{d}}&d>2
\end{array}
\right.}$ -, while $\MKd(\rho^t_{\mu_{Z^{in}}},\rho^t_{\rho^{in}})^2$ by the Dobrushin estimate in Theorem \ref{thmvlasov} -namely 
$.\MKd(\rho^t_{\mu_{Z^{in}}},\rho^t_{\rho^{in}})^2\leq 2e^{\Gamma(t)}\MKd(\mu_{Z^{in}},\rho^{in})^2$.

Therefore, by \eqref{eq3},\eqref{eq2} and \eqref{eq11},
\begin{eqnarray}
\|\nabla\varphi^t_{Z^{in}}-\nabla\psi^t_{\rhoi}\|_\infty^2
&\leq&
2t^2\Lip(\nabla\chi)^2
\left(
2e^{\Gamma(t)}\MKd(\mu_{Z^{in}},\rho^{in})^2
+
\tau_{\mu_{Z^{in}}}(t)
{\scriptsize \left\{
\begin{array}{ll}
N^{-\frac12}&d=1\\
N^{-\frac12}{\log{N}}&d=2\\
N^{-\frac1{d}}&d>2
\end{array}
\right.}
\right)\nonumber
\end{eqnarray}
and
\begin{eqnarray}
&&\int_{\bR^{2dN}}\|\nabla\varphi^t_{Z^{in}}-\nabla\psi^t_{\rhoi}\|^2_\infty
(\rhoi)^{\otimes N}(dZ^{in})
\nonumber\\
&\leq &
2t^2\Lip(\nabla\chi)^2
\left(
2e^{\Gamma(t)}CM_2(\rhoi){\scriptsize \left\{
\begin{array}{ll}
N^{-\frac12}&d=1\\
N^{-\frac12}{\log{N}}&d=2\\
N^{-\frac1{d}}&d>2
\end{array}
\right.}
+
\tau_{\mu_{Z^{in}}}(t)
{\scriptsize \left\{
\begin{array}{ll}
N^{-\frac12}&d=1\\
N^{-\frac12}{\log{N}}&d=2\\
N^{-\frac1{d}}&d>2
\end{array}
\right.}
\right)\nonumber\\
&=&
2t^2\Lip(\nabla\chi)^2(2e^{\Gamma(t)}CM_2(\rhoi)+\tau_{\mu_{Z^{in}}}(t)){\scriptsize \left\{
\begin{array}{ll}
N^{-\frac12}&d=1\\
N^{-\frac12}{\log{N}}&d=2\\
N^{-\frac1{d}}&d>2
\end{array}
\right.}\nonumber\\
&\leq&
2t^2\Lip(\nabla\chi)^2(2e^{\Gamma(t)}CM_2(\rhoi)+\bar\tau_{\rhoi}(t)){\scriptsize \left\{
\begin{array}{ll}
N^{-\frac12}&d=1\\
N^{-\frac12}{\log{N}}&d=2\\
N^{-\frac1{d}}&d>2
\end{array}
\right.}=:\rc(t){\scriptsize \left\{
\begin{array}{ll}
N^{-\frac12}&d=1\\
N^{-\frac12}{\log{N}}&d=2\\
N^{-\frac1{d}}&d>2
\end{array}
\right.}\label{tauct}
\end{eqnarray}
by \eqref{bbar} since one integrates in $Z^{in}$ on the support of $(\rhoi)^{\otimes N}$ so that $supp(\mu_{Z^{in}})\subset supp(\rhoi)$.
\vskip 0.3cm
Step $5$:
\textit{[rate of convergence]}:
the estimate for $\tau_{\rhoi}(t)$ is proven at the end of Section \ref{seclvtov} (see formula \eqref{estau}), the one for $\rc(t)$ follows by \eqref{tauct}. \end{proof}

\begin{proof}[Proof of 
 Corollary \ref{cormain}]
 Corollary \ref{cormain} is a  rephrasing of Theorem \ref{mainthm} in the monokinetic case, which is straightforward by using  Theorem \ref{hydro}.\end{proof}

\begin{Rmk}\label{alter}
As it is clear from the step 3 above, an alternative to the second statement in Theorem \ref{mainthm} is the following.
$$
\|\nabla\varphi^t_{Z^{in}}-\nabla\psi^t_{\rhoi}\|^2_\infty
\leq 
2t^2\Lip(\nabla\chi)^2\left(2e^{\Gamma(t)}\MKd(\mu_{Z^{in}},\rho^{in})^2+
\tau_{\rhoi}(t)\left\{
\begin{array}{ll}
N^{-\frac12}&d=1\\
N^{-\frac12}{\log{N}}&d=2\\
N^{-\frac1{d}}&d>2
\end{array}
\right.\right)
$$
for each $Z^{in}\in\ supp((\rhoi)^{\otimes N})$.
\end{Rmk}

\section{Technical Preliminaries}\label{prelim}
In this section we establish or recall  several results which will be intensively used in the core of the proof of Theorem \ref{mainthm}.
\subsection{Wasserstein distances}
Let us start this section by recalling the definition of the first and second order Wasserstein distance $\MKd$ (see \cite{VillaniAMS,VillaniTOT}).
\begin{Def}[quadratic Wasserstein distance]\label{defwas}
The Wasserstein distance of order two between two probability measures $\mu,\nu$ on $\bR^m$ with finite second moments is defined as
$$
\MKd(\mu,\nu)^2
=
\inf_{\gamma\in\Gamma(\mu,\nu)}\int_{\bR^m\times\bR^m}
|x-y|^2\gamma(dx,dy)
$$
where $\Gamma(\mu,\nu)$ is the set of probability measures on $\bR^m\times\bR^m$ whose marginals on the two factors are $\mu$ and $\nu$.
\end{Def}
Likewise  is the first order Wasserstein distance  $\MKu$  between two probability measures $\mu,\nu$ on $\bR^m$ with finite  moments is defined by the following.

\begin{Def}\label{defwasun}
$$
\MKu(\mu,\nu):=\sup\{\int_{\bR^{2d}}f(\mu-\nu)|\ f\in C^\infty(\bR^{2d}),\ \Lip(f)\leq 1\}.
$$
\end{Def}
\begin{Lem}\label{lemfund}\

\begin{enumerate}
[label=(\roman*)]

\item\hskip 1cm{$
\MKu(\mu,\nu)\leq\MKd(\mu,\nu),
$}
\item\label{33ii}
\hskip 1cm{$
\sup\limits_{\Lip{f}\leq 1}|\int f(\mu-\nu)|=\MKu(\mu,\nu)\leq\MKd(\mu,\nu),
$}
\item
\hskip 1cm The convergence in the weak topology  (i.e., in
the duality with $C_b(R^{2d})$ of sequences of probability measures with supports equibounded is equivalent to the convergence with respect to the distance $\MKp,\ p=1,2$ (in fact with respect to  Wasserstein of all orders),
\end{enumerate}
\end{Lem}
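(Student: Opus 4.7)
The plan is to handle parts (i) and (ii) jointly by combining Cauchy--Schwarz with the Kantorovich--Rubinstein duality, and then to derive (iii) by combining the resulting bounds with the compactness of the common support of the measures under consideration.

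For \textbf{parts (i) and (ii)}, the starting point is Cauchy--Schwarz: for any coupling $\gamma\in\Gamma(\mu,\nu)$, since $\gamma$ is a probability measure,
$$\int|x-y|\,\gamma(dx,dy)\le\Big(\int|x-y|^2\,\gamma(dx,dy)\Big)^{1/2}.$$
Taking the infimum over couplings on both sides, the right-hand side becomes $\MKd(\mu,\nu)$ by Definition~\ref{defwas}, while the left-hand side becomes the classical Monge--Kantorovich $W_1$ distance; the identification of the latter with the dual expression $\sup_{\Lip(f)\le 1}\int f\,d(\mu-\nu)$ is the Kantorovich--Rubinstein theorem, which I would simply quote from \cite{VillaniAMS,VillaniTOT}. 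This yields (i), and (ii) follows with two cosmetic adjustments: the absolute value is harmless because $-f$ has the same Lipschitz constant as $f$, and the restriction to $C^\infty$ in Definition~\ref{defwasun} is also harmless, since any Lipschitz function on $\bR^{2d}$ can be approximated by its mollifications, which are smooth, preserve the Lipschitz constant, and converge locally uniformly.

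For \textbf{part (iii)}, I would restrict attention to a common compact set $K\subset\bR^{2d}$ containing all the supports in play. For the direction \emph{weak $\Rightarrow$ Wasserstein}, I would first note that any coupling supported in $K\times K$ satisfies $|x-y|^p\le \Diam(K)^{p-1}|x-y|$, hence $\MKp(\mu_n,\mu)^p\le \Diam(K)^{p-1}\MKu(\mu_n,\mu)$ for $p\ge 1$, reducing matters to $\MKu$. Weak convergence against each fixed $1$-Lipschitz test function is immediate, but the supremum defining $\MKu$ must be taken uniformly; I would upgrade via Arzel\`a--Ascoli, observing that the $1$-Lipschitz functions on $K$ normalized to vanish at a fixed point form an equicontinuous, uniformly bounded, hence relatively compact family in $C(K)$, which promotes pointwise to uniform convergence. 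For the converse \emph{Wasserstein $\Rightarrow$ weak}, given $\phi\in C_b(\bR^{2d})$, I would approximate $\phi|_K$ uniformly by a Lipschitz function $f$ (via Stone--Weierstrass) and exploit
$$\Big|\int\phi\,d(\mu_n-\mu)\Big|\le 2\|\phi-f\|_{L^\infty(K)}+\Lip(f)\,\MKu(\mu_n,\mu),$$
letting $n\to\infty$ first and improving the approximation afterwards. The \textbf{main obstacle} is not any single step---the Kantorovich--Rubinstein duality is standard and can simply be cited---but the Arzel\`a--Ascoli upgrade in (iii), which is where the compactness of the common support enters in an essential way to convert pointwise convergence over Lipschitz test functions into uniform convergence.
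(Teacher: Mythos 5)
Your argument is correct, but it takes a genuinely different route from the paper, which disposes of the lemma almost entirely by citation: items (i) and (ii) are identified with formulas (7.1) and (7.3) of \cite{VillaniAMS} (the Kantorovich--Rubinstein duality together with the monotonicity of Wasserstein distances in the order), and item (iii) is deduced from \cite[Theorem 7.12 (iii)]{VillaniAMS}, which characterizes $\MKp$-convergence as weak convergence plus convergence of the $p$-th moments, the latter being automatic for sequences with equibounded supports. You instead reprove the monotonicity $\MKu\leq\MKd$ directly by Cauchy--Schwarz on couplings (which is essentially the proof behind (7.1) anyway), cite only the duality theorem, and establish (iii) by hand: the reduction $\MKp(\mu_n,\mu)^p\leq\Diam(K)^{p-1}\MKu(\mu_n,\mu)$ for measures supported in a common compact $K$, the Arzel\`a--Ascoli upgrade from pointwise to uniform convergence over the normalized $1$-Lipschitz class for the direction weak $\Rightarrow$ Wasserstein, and uniform Lipschitz approximation of bounded continuous test functions for the converse. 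What your version buys is self-containedness, explicit constants, and a careful handling of a point the paper glosses over, namely that Definition \ref{defwasun} restricts to $C^\infty$ test functions, which you reconcile with the full Lipschitz class by mollification; what the paper's version buys is brevity and the fact that Villani's Theorem 7.12 covers Wasserstein distances of all orders at once, which is exactly the parenthetical claim in item (iii). Both arguments are sound; just make sure, if you keep the Arzel\`a--Ascoli step, to spell out the standard contradiction argument (extract a subsequence of near-optimal $1$-Lipschitz functions, normalize at a base point, pass to a uniform limit) so that the promotion from pointwise to uniform convergence is explicit.
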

\begin{proof}
The first and second items are exactly formulas (7.1) and (7.3) in \cite{VillaniAMS}, The third item is a straightforward consequence of \cite[Theorem 7.12 (iii)]{VillaniAMS}, since the weak convergence of equisupported sequences of measures implies the convergence of all of their moments
\end{proof}
Note that Lemma \ref{lemfund} implies that
\be\label{rabin}
|\int f(\mu-\nu)|\leq \Lip{f}\MKu(\mu,\nu)\leq\Lip{f}\MKd(\mu,\nu)
\ee
\label{prelimwass}
\subsection{The diffusion term}\label{prelimdiff}
The three equations \eqref{defeqphi0}, \eqref{defeqphi}, \eqref{defeqphiV}, namely
\be\label{threeq}
\left\{
\begin{array}{rcll}
\partial_s\varphi^s(z)
&=&D\Delta_z\varphi-\kappa\varphi +f(z,Y(s)),&\ \varphi^0=\macphin\\
\partial_s\Psi^s(z)&=&D\Delta_z\Psi-\kappa\Psi +g(z,\rho^s_{N;1}),&\ \Psi^0=\macphin\\
\partial_s\psi^t(z)&=&D\Delta_z\psi-\kappa\psi+g(z,\rho^s),&\ \psi^0=\macphin
\end{array}
\right.
\ee

can be solved, denoting ${\bf \mathbb\bf  I}=\begin{pmatrix}1\\1\\1\end{pmatrix}
$, by
\begin{eqnarray}\label{solvedas}
\begin{pmatrix}
\varphi^t(z)
\\
\Psi^t(z)\\
\psi^t(z)
\end{pmatrix}
&=&
e^{-\kappa t}\int_0^te^{(t-s)D\Delta_z}
\begin{pmatrix}
f(z,X(s))\\
g(z,\rho^s_{N;1})\\
g(z,\rho^s)
\end{pmatrix}ds
+
e^{-\kappa t}e^{tD\Delta}\macphin {\bf \mathbb\bf  I}
\\
&=&
e^{-\kappa t}\int_0^t\int_{\bR^d}\tfrac{e^{-\frac{(z-z')^2}{4D(t-s)}}}{(4\pi D(t-s))^{\frac d2}}
\begin{pmatrix}
f(z',X(s))\\
g(z',\rho^s_{N;1})\\
g(z',\rho^s)
\end{pmatrix}dsdz'
+
e^{-\kappa t}e^{tD\Delta}\macphin.{\bf \mathbb\bf  I}
\nonumber
\end{eqnarray}


Note that $\nabla_z\begin{pmatrix}
\varphi^t(z)
\\
\Psi^t(z)\\
\psi^t(z)
\end{pmatrix}$ is given by the same formula after replacing $\chi$ by $\nabla\chi$ in the definitions of $f$ and $g$. 
%
%
\vskip 1cm
The following lemma will be systematically used inthe forthcoming sections.

\begin{Lem}\label{systemat}
Let $\rho,\rho'\in\cP(\bR^d)$ and $\mu\in\Lip(\bR^d)$. Then,  for all $t\geq 0$,
$$
\|(e^{t\Delta}\mu)*(\rho-\rho')\|_{L^\infty(\bR^d)}
\leq
\Lip(\mu)\MKp(\rho,\rho'),\ p=1,2.
$$
\end{Lem}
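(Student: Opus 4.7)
The plan is to combine two elementary facts: the heat semigroup $e^{t\Delta}$ is a contraction on the Lipschitz seminorm, and the Kantorovich--Rubinstein duality recalled as \eqref{rabin} bounds the pairing of a Lipschitz function with a difference of probability measures by $\MKu$ (hence also $\MKd$). The statement will then follow by a pointwise estimate in $x$ followed by taking the supremum.

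First I would establish that $\Lip(e^{t\Delta}\mu)\leq\Lip(\mu)$. Writing
$$
(e^{t\Delta}\mu)(x)=\int_{\bR^d}G_t(y)\,\mu(x-y)\,dy,
$$
with $G_t(y)=(4\pi t)^{-d/2}e^{-|y|^2/(4t)}$ the standard heat kernel, which is a probability density on $\bR^d$, a direct translation gives
$$
|(e^{t\Delta}\mu)(x)-(e^{t\Delta}\mu)(x')|\leq\int_{\bR^d}G_t(y)\,|\mu(x-y)-\mu(x'-y)|\,dy\leq\Lip(\mu)\,|x-x'|.
$$
Since a Lipschitz function has at most linear growth, the integral defining $e^{t\Delta}\mu$ converges absolutely and the manipulation is legitimate.

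Next, I fix $x\in\bR^d$ and apply \eqref{rabin} to the test function $y\mapsto(e^{t\Delta}\mu)(x-y)$, which by the previous step is Lipschitz in $y$ with constant at most $\Lip(\mu)$. This yields
$$
\bigl|\bigl((e^{t\Delta}\mu)*(\rho-\rho')\bigr)(x)\bigr|=\Bigl|\int_{\bR^d}(e^{t\Delta}\mu)(x-y)(\rho-\rho')(dy)\Bigr|\leq\Lip(\mu)\MKu(\rho,\rho')\leq\Lip(\mu)\MKd(\rho,\rho').
$$
Taking the supremum over $x\in\bR^d$ then produces the claimed bound for both $p=1$ and $p=2$ simultaneously.

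I do not anticipate any genuine obstacle here: the only nontrivial ingredient is the Lipschitz-contraction property of the heat semigroup, which is immediate from its convolution representation with a probability density, and the rest is a direct appeal to \eqref{rabin}.
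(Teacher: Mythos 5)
Your proof is correct and follows essentially the same route as the paper: a pointwise application of the Kantorovich--Rubinstein bound \eqref{rabin} to the Lipschitz test function $y\mapsto(e^{t\Delta}\mu)(x-y)$, combined with the fact that the heat semigroup does not increase the Lipschitz constant. The only cosmetic difference is that you justify $\Lip(e^{t\Delta}\mu)\leq\Lip(\mu)$ by convolving with the Gaussian probability density, whereas the paper argues via translation invariance and the $L^\infty$-contraction of $e^{t\Delta}$; both are equally valid.
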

\begin{proof}
On has
\begin{eqnarray}
|(e^{t \Delta}\mu) *(\rho-\rho')(x_i)|
&=&
|\int(e^{t\Delta}\mu)(x_i-z)(\rho-\rho')dz|\nonumber\\
&\leq&
\Lip{((e^{t\Delta}\mu)(x_i-\cdot))}\MKd(\rho,\rho')\nonumber\\
&\leq&
\Lip{(e^{t\Delta}\mu)}\MKd(\rho,\rho')\nonumber\\
&\leq&
\Lip{\mu}
\MKd(\rho,\rho')\nonumber
\end{eqnarray}
since, by  
Lemma \ref{lemfund},
$$
\sup_{\Lip{f}\leq 1}\left|\int f(d\mu-d\nu)\right|=\MKu(\mu,\nu)\leq\MKd(\mu,\nu),
$$
and 
\begin{eqnarray}
|(e^{t\Delta}\mu)(x)-(e^{t\Delta}\mu)(y)|
&=&
|(e^{t\Delta}(\mu(x-\cdot)-e^{t\Delta}(\mu(y-\cdot))(0)|\nonumber\\
&\leq& |\mu(x)-\mu(y)|\leq\Lip(\mu)|x-y|.\nonumber
\end{eqnarray}

\end{proof}
\begin{Cor}\label{corder}Let $\varphi^t$ and $\si^t$ solve \eqref{threeq}. Then
\be\nonumber
\|\nabla\varphi^t-\nabla\psi^t\|_{L^\infty(\bR^d)}
\leq
t\Lip(\nabla\chi)\MKd((\Phi^t_N\#(\rhoi)^{\otimes N})_{N:1},\rho^t).
\ee
\end{Cor}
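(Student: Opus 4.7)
The plan is to work directly with the explicit integral representation \eqref{solvedas}. Subtracting the formulas for $\varphi^t$ and $\psi^t$ first cancels the common initial-datum term $e^{-\kappa t}e^{tD\Delta}\macphin$; then applying $\nabla_z$ (which commutes with the heat semigroup $e^{(t-s)D\Delta_z}$) I will obtain
$$
\nabla\varphi^t(z)-\nabla\psi^t(z)=\int_0^t e^{-\kappa(t-s)}\,e^{(t-s)D\Delta_z}\bigl(\nabla_z f(\cdot,X(s))-\nabla_z g(\cdot,\rho^s)\bigr)(z)\,ds.
$$

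Next, I will rewrite $f$ and $g$ as convolutions: $f(z,X(s))=(\chi*\mu_{X(s)})(z)$ with the phase-space empirical measure projected onto positions, $\mu_{X(s)}:=\tfrac1N\sum_j\delta_{x_j(s)}$, and $g(z,\rho^s)=(\chi*\rho^s_x)(z)$ with $\rho^s_x$ the position marginal of $\rho^s$. Using once more that the heat semigroup commutes with convolution, each integrand takes the form $(e^{(t-s)D\Delta}\nabla\chi)*(\mu_{X(s)}-\rho^s_x)$. Lemma \ref{systemat} applied with $\mu:=\nabla\chi$ then yields the pointwise-in-$s$ bound
$$
\bigl\|(e^{(t-s)D\Delta}\nabla\chi)*(\mu_{X(s)}-\rho^s_x)\bigr\|_{L^\infty(\bR^d)}\leq\Lip(\nabla\chi)\,\MKd(\mu_{X(s)},\rho^s_x),
$$
and I will reduce the marginal distance to the full phase-space one by pushing any optimal coupling on $\bR^{2d}\times\bR^{2d}$ through the projection onto positions, giving $\MKd(\mu_{X(s)},\rho^s_x)\leq\MKd(\mu_{Z(s)},\rho^s)$. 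Combining these with $e^{-\kappa(t-s)}\leq 1$ and integrating in $s$ will produce
$$
\|\nabla\varphi^t-\nabla\psi^t\|_{L^\infty(\bR^d)}\leq\Lip(\nabla\chi)\int_0^t\MKd(\mu_{Z(s)},\rho^s)\,ds.
$$

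The hard part will be to identify this with the precise right-hand side of the statement, namely $t\,\Lip(\nabla\chi)\,\MKd((\Phi^t_N\#(\rhoi)^{\otimes N})_{N:1},\rho^t)$. This requires two things. First, a monotonicity in $s$ of the integrand, so that the time integral is controlled by $t$ times its value at $s=t$; this will follow from the Gronwall-type estimates built in Propositions \ref{ptolv}--\ref{lvtov}, where the time evolution of $\MKd(\mu_{Z(s)},\rho^s)$ is shown to be governed by a differential inequality of Dobrushin type. Second, replacing the empirical $\mu_{Z(s)}$ by the first marginal $(\Phi^s_N\#(\rhoi)^{\otimes N})_{N:1}$: this I plan to obtain by averaging over $Z^{in}\sim(\rhoi)^{\otimes N}$ and invoking exchangeability of the initial law, which makes the averaged empirical at time $s$ coincide with the first marginal of the pushforward. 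Up to these two identifications, which use only structural ingredients already at our disposal, the estimate above is exactly the claimed bound.
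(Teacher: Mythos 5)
Your first half is exactly the argument the paper has in mind: subtract the Duhamel representations \eqref{solvedas} (the $\macphin$ terms cancel), commute $\nabla$ with the heat semigroup, write the difference of sources as $(e^{(t-s)D\Delta}\nabla\chi)*(\mu_{X(s)}-\rho^s_x)$, apply Lemma \ref{systemat} componentwise, use $e^{-\kappa(t-s)}\le 1$ and integrate; the reduction $\MKd(\mu_{X(s)},\rho^s_x)\le\MKd(\mu_{Z(s)},\rho^s)$ by projecting an optimal coupling is also fine. Up to the bound $\|\nabla\varphi^t-\nabla\psi^t\|_{L^\infty}\le\Lip(\nabla\chi)\int_0^t\MKd(\mu_{Z(s)},\rho^s)\,ds$ this is precisely the computation carried out in display \eqref{coupe}.

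The two bridging steps you then invoke to reach the printed right-hand side are a genuine gap. (i) There is no monotonicity to appeal to: the Gr\"onwall/Dobrushin estimates behind Propositions \ref{ptolv}--\ref{lvtov} give upper bounds that increase in $s$, not that $s\mapsto\MKd(\mu_{Z(s)},\rho^s)$ is nondecreasing, so $\int_0^t\MKd(\cdot^s)ds\le t\,\MKd(\cdot^t)$ is unjustified. (ii) More seriously, the empirical measure $\mu_{Z(s)}$ cannot be traded for the first marginal $(\Phi^s_N\#(\rhoi)^{\otimes N})_{N;1}$ by averaging in $Z^{in}$: the left-hand side $\|\nabla\varphi^t-\nabla\psi^t\|_{L^\infty}$ is a pointwise-in-$Z^{in}$ quantity, so an averaged estimate does not control it, and even for the averaged quantity the inequality runs the wrong way, since convexity of the squared Wasserstein distance gives $\MKd((\Phi^s_N\#(\rhoi)^{\otimes N})_{N;1},\rho^s)^2\le\int\MKd(\mu_{\Phi^s_N(Z^{in})},\rho^s)^2\,(\rhoi)^{\otimes N}(dZ^{in})$, i.e.\ the marginal distance is the \emph{smaller} one. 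Quantitatively, at $s=0$ the marginal equals $\rhoi$ while $\int\MKd(\mu_{Z^{in}},\rhoi)^2(\rhoi)^{\otimes N}(dZ^{in})\simeq C_d(N)$ by Theorem \ref{forun}, so no pathwise or averaged identification of the two quantities can hold. Note that the paper itself never performs these two steps: Corollary \ref{corder} is used as the one-line consequence of \eqref{solvedas} and Lemma \ref{systemat}, with the Wasserstein distance of the measures actually driving the two equations being compared, kept inside the time integral (as in \eqref{coupe}) or applied to Vlasov solutions with the relevant initial data in Step~4 of the main proof; the passage to $(\Phi^t_N\#(\rhoi)^{\otimes N})_{N;1}$ and the Fournier--Guillin rate is made only afterwards, on averaged quantities. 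So the difficulty you correctly isolated as ``the hard part'' is real, but the resolution you propose does not go through; the corollary should be proved (and read) with the distance between the driving measures, uniformly on $[0,t]$, rather than literally with the time-$t$ marginal on the right-hand side.
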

%
\subsection{Propagation of Wasserstein type estimates}
\label{prelimpropa}
In this paragraph, we establish a result used later as a black box, concerning the propagation of estimates in Wasserstein topology under general  transport 
equation including the several types used in this paper.
\begin{Thm}\label{thmpropwas}

Let us define the set of compactly supported probability measure on $\bR^{2dN}$ invariant by permutation:
$$
\cP^p_c(\bR^{2dN}):=\{\rho\in\cP_c(\bR^{2dN}),\rho(\sigma(dZ))=\rho(dZ),\ \forall \sigma\in\Sigma_N\}
$$
where $\sigma(Z)$ is defined in \eqref{defperm}.

 Let us suppose that the two equations
\be\label{twoeq}
\begin{array}{l}
\partial_t\rho^t_i+V\cdot\nabla_X\rho^t_i=\nabla_V.\cdot(\vlau_i([\rhoi_i]^{\leq t},X,V)\rho^t_i),\ \rho^0_i=
\rhoi_i\in\cP_c^p(\bR^{2dN}),\ i=1,2,
\end{array}
\ee
have the property of existence and uniqueness of solutions in $C^0(\bR^+,\cP_c^p(\bR^{2dN}))$.

Here 
\be\label{defcroch}
[\rhoi_i]^{\leq t}: \ s\in[0,t]\to \rho_i^s,\ i=1,2,
\ee 
and 
$\vlau_i([\rhoi_i]^{\leq t},X,V)$ 
is supposed to be invariant by permutations of the variables $(x_j,v_j)$, $ j=1,\dots, N$,   is  Lipschitz continuous with respect to $(X,V)$ and satisfies the estimate
$$
\vlau_i(\psi^{\leq t},X,V)\leq \gen_0\|V\|,
\ i=1,2
$$
for some constant $\gen_0<\infty$, uniformly in $X,V\in\bR^{2dN},\psi^{\leq t}:\ [0,t]\to\cP^p_c(\bR^{2dN}),t\in\bR$.


Let us finally define, for $i=1,2$,
$$
(\rho^t_i)_{N:1}(x,v):=\left\{\begin{array}{ll}
\int_{\bR^{2d(N-1)}}\ 
\rho^t(x,x_2,\dots,x_n;v,v_2,\dots,v_N)
dx_2\dots dx_N dv_2\dots dv_N&N>1\\
\rho_i^t(x,v),& N=1.
\end{array}
\right.
$$
Then, for all $t\in\bR^+$, and all $i=1,2$,

\begin{eqnarray}
&&\MKd((\rho^t_1)_{N:1},(\rho^t_2)_{N:1})^2\leq e^{\int_0^tL_1(s)ds}
\MKd(\rhoi_1,\rhoi_2)^2\nonumber\\
&&+
\frac2N\int_0^t\int_{\bR^{2dN}}|\vlau_1([\rhoi_1]^{\leq s},Y,\Xi)-\vlau_2([\rhoi_2]^{\leq s},Y,\Xi)| ^2
\rho_2^s
(dY,d\Xi)e^{\int_s^tL_1(u)du}ds\nonumber
\end{eqnarray}
with
$$
L_1(t)=
2(1+ \sup_{(X,V)\in\ supp(\rho_1^t)}
(\Lip(\vlau_1
(t,X,V)
)_{(X,V)})^2).
$$
\end{Thm}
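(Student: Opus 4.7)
The proof will follow a Dobrushin-type coupling argument. As a first step, I would freeze the histories $[\rhoi_i]^{\leq t}$ (permitted by the assumed existence and uniqueness in $C^0(\bR^+, \cP^p_c(\bR^{2dN}))$) and treat the maps $s \mapsto \vlau_i([\rhoi_i]^{\leq s}, \cdot, \cdot)$ as prescribed, non-autonomous, Lipschitz vector fields in $(X,V)$. The coupled ODE $\dot X = V$, $\dot V = \vlau_i$ then generates a global flow $T_i^s$ on $\bR^{2dN}$ (the growth bound $\|\vlau_i\| \le \gen_0 \|V\|$ precluding blow-up), and by the method of characteristics $\rho_i^s = T_i^s \# \rhoi_i$ for all $s \in [0,t]$.

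Next I would pick an optimal coupling $\tilde\pi^0 \in \Gamma(\rhoi_1, \rhoi_2)$ realizing $\MKd(\rhoi_1, \rhoi_2)^2$ and symmetrize it into $\pi^0 := \frac{1}{N!} \sum_{\sigma \in \Sigma_N} (\sigma,\sigma)\# \tilde\pi^0$. Since both $\rhoi_i$ lie in $\cP^p_c$ and each $\sigma$ acts as an isometry on $\bR^{2dN}$, $\pi^0$ is still a coupling of $\rhoi_1, \rhoi_2$ with the same quadratic cost, and is invariant under the diagonal action of $\Sigma_N$. The permutation-equivariance of $\vlau_i$ forces every $T_i^s$ to commute with every $\sigma\in\Sigma_N$, so that $\pi^t := (T_1^t, T_2^t)\# \pi^0$ is still a diagonally $\Sigma_N$-invariant coupling of $\rho_1^t$ and $\rho_2^t$.

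Setting $E(s) := \int \|T_1^s(Z_1^0) - T_2^s(Z_2^0)\|^2 \pi^0(dZ_1^0, dZ_2^0)$, I would differentiate along the trajectories and split
\begin{align*}
\vlau_1([\rhoi_1]^{\leq s}, Z_1) - \vlau_2([\rhoi_2]^{\leq s}, Z_2) = \bigl[\vlau_1(\cdot,Z_1) - \vlau_1(\cdot,Z_2)\bigr] + \bigl[\vlau_1(\cdot,Z_2) - \vlau_2(\cdot,Z_2)\bigr].
\end{align*}
Applying Young's inequality $2ab \leq \tfrac12 a^2 + 2 b^2$ to the two resulting $(V_1 - V_2)$-cross terms (and the simpler $2ab \leq a^2 + b^2$ to $(X_1 - X_2)\cdot(V_1 - V_2)$) absorbs the Lipschitz contribution into the prefactor $L_1(s)$ and leaves a residual $2\|\vlau_1(\cdot,Z_2) - \vlau_2(\cdot,Z_2)\|^2$. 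Since $Z_1 \in \mathrm{supp}(\rho_1^s)$ on the support of $\pi^t$, one may legitimately use the localized Lipschitz constant appearing in the definition of $L_1(s)$. Integrating against $\pi^t$, whose second marginal is $\rho_2^s$, yields
\begin{align*}
\tfrac{d}{ds} E(s) \leq L_1(s)\, E(s) + 2 \int_{\bR^{2dN}} \|\vlau_1([\rhoi_1]^{\leq s},Y,\Xi) - \vlau_2([\rhoi_2]^{\leq s},Y,\Xi)\|^2 \rho_2^s(dY,d\Xi),
\end{align*}
and Grönwall's lemma provides the explicit bound on $E(t)$.

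To conclude, I would exploit the diagonal $\Sigma_N$-invariance of $\pi^t$: projecting onto the first $(x,v)$-factor on each side yields a coupling of $(\rho_1^t)_{N:1}$ and $(\rho_2^t)_{N:1}$, and the symmetry produces $\int \|z_{1,1} - z_{2,1}\|^2 \pi^t = \tfrac{1}{N} \sum_{k=1}^N \int \|z_{1,k}-z_{2,k}\|^2 \pi^t = \tfrac{1}{N} E(t)$. Hence $\MKd((\rho_1^t)_{N:1}, (\rho_2^t)_{N:1})^2 \leq \tfrac{1}{N} E(t)$, which, combined with the Grönwall bound, gives the stated inequality. The main technical subtlety is the functional, non-local-in-time dependence of the drift on the unknown; but once the histories are frozen thanks to the well-posedness hypothesis, this dependence is invisible at the characteristic level and the argument reduces to a robust ODE-level energy estimate, the only genuinely $N$-body input being the permutation invariance that reduces the full coupling cost by the factor $1/N$.
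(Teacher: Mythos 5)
Your proposal is correct and follows essentially the same route as the paper: symmetrize an optimal coupling of the initial data, evolve it by the pair of dynamics, derive a Gr\"onwall inequality for the quadratic cost with the residual term $2\int|\vlau_1-\vlau_2|^2\,\rho_2^s$, and use the diagonal $\Sigma_N$-invariance to reduce the full cost to the first-marginal cost with the $1/N$ factor. The only difference is presentational: you build the time-dependent coupling at the Lagrangian level as $(T_1^t,T_2^t)\#\pi^0$ along the characteristics of the frozen drifts, whereas the paper defines the same object at the Eulerian level as the solution of a linear transport equation on the doubled phase space.
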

The proof on Theorem \ref{thmpropwas} is given in Appendix \ref{proofthmpropwas}.
\section{From particles to Liouville-Vlasov
}\label{secptolv}
In this section we estimate $\MKd((\Phi_N^t\#\rho^{in})_{N;1},(\rho^t_N)_{N:1})$, where $\Phi_N^t$ defined by \eqref{defphitN} is generated by the particle system $(\ref{eq1},\ref{defG},\ref{defeqphi0},\ref{deff})$   and $\rho^t_N$ is the solution of the  $N$-body Liouville type one defined by $(\ref{liouvansatz},\ref{defG2},\ref{defeqphi},\ref{fansatz})$ with initial data $\rhoi$.

Applying Theorem  \ref{thmpropwas} with

\begin{eqnarray}
(\vlau_1([\rhoin]^{\leq s},X,V))_i&=& F_i(t,X,V)=\frac1N\sum_{j=1}^N\gamma(v_i-v_j,x_i-x_j)+\eta
\nabla_{x}\varphi^t(x_i)
+F_{ext}(x_i)  \nonumber\\
(\vlau_2([\rhoin]^{\leq s},X,V))_i&=& G_i(t,X,V)=\frac1N\sum_{j=1}^N\gamma(v_i-v_j,x_i-x_j)+\eta\nabla_{x}\Psi^t(x_i)
+F_{ext}(x_i)  \nonumber
\end{eqnarray} 
we get easily that, since the initial conditions are the same,
\begin{eqnarray}
&&\MKd((\Phi_N^t\#\rho^{in})_{N;1},(\rho^t_N)_{N:1})^2\label{truc1}\\
&\leq&
4\eta\frac1N\sum_{i=1}^N\int_0^t
\int_{\bR^{2dN}}
|
\nabla\varphi^s(x_i)-\nabla\Psi^s(x_i)
|^2(\Phi^s_N\#(\rho^{in})^{\otimes N})(dX,dV)
e^{\int_s^t\bar L_{\rhoi}(u)du}ds\nonumber
\end{eqnarray}
with
\be\label{deflptolv}
\bar L_{\rhoi}(u)=
2+2\left(
\sup_{\substack{i,l=1,\dots,N\\
(X,V)\in supp(
\rho^t_N)}}\Lip{(\gen)}^2_{(x_i-x_l,v_i-v_l)}
+2u\eta\Lip(\nabla\chi)+\Lip(F_{ext})\right)^2.
\ee

Therefore, we have to estimate
\begin{eqnarray}\label{zzero}
&&\int_{\bR^{2dN}}
|
\nabla\varphi^s(x_i)-\nabla\Psi^s(x_i)
|^2(\Phi^s_N\#(\rho^{in})^{\otimes N})(dX,dV)\nonumber\\
&=&
\int_{\bR^{2dN}}|\nabla\varphi^s(x_i^s(X,V))-\nabla\Psi^s(x_i^s(X,V))
|^2(\rho^{in})^{\otimes N}(dX,dV)
\end{eqnarray}
where we  have denoted
\be\label{defxi}
\Phi^t_N(X,V)
=:(x_1^t(X,V),\dots,x_N^t(X,V),v_1^t(X,V),\dots,v_N^t(X,V))
\ee
i.e. $x_i^t(X,V)$ is the $x_i$-component of $\Phi^t_N(X,V)$,

We first remark that, in \eqref{threeq}, 
$$
f(\cdot,X)=\chi *\mu_{Z},\ 
$$
where, for any $Z=(z_1,\dots,z_N)\in\bR^{2dN}$,  the empirical measure $\mu_Z$  is defined by
\be\label{defemp}
\mu_Z:=\tfrac1N\sum_{k=1}^N\delta_{z_k}
\ee
Therefore, by \eqref{solvedas},
$$
\nabla\varphi^s(x_i^s(X,V))
=
e^{-\eta s}\int_0^se^{(s-u)\Delta}\nabla\chi*\mu_{\Phi^u(X.V)}(x_i^s(X,V))du
$$
so that, denoting $\nabla_l,\ l=1,\dots,d,$ the $d$ components of the vector $\nabla$,
%
%
\begin{eqnarray}
&&\ \ \ \ \ \ \ \int_{\bR^{2dN}}| 
\nabla\varphi^s(x_i)-\nabla\Psi^s(x_i)
|^2(\Phi^t_N\#(\rho^{in})^{\otimes N})(dX,dV)
\label{coupe}\\
&=&e^{-2\kappa s}\int_{\bR^{2dN}}
\left|\int_0^se^{(s-u)\Delta}\nabla\chi*(\mu_{\Phi^u(X,V)}-(\rho^u_N)_{N;1})(x_i^s(X,V))du\right|^2
(\rho^{in})^{\otimes N}(dX,dV),\nonumber\\
&=&e^{-2\kappa s}\int_{\bR^{2dN}}\sum_{l=1}^d
\left|\int_0^se^{(s-u)\Delta}\nabla_l\chi*(\mu_{\Phi^u(X,V)}-(\rho^u_N)_{N;1})(x_i^s(X,V))du\right|^2
(\rho^{in})^{\otimes N}(dX,dV),\nonumber\\
&\leq&e^{-2\kappa s}\int_{\bR^{2dN}}\sum_{l=1}^d
\left|\int_0^s\|e^{(s-u)\Delta}\nabla_l\chi*(\mu_{\Phi^u(X,V)}-(\rho^u_N)_{N;1})\|_{L^\infty(\bR^d)}
du\right|^2
(\rho^{in})^{\otimes N}(dX,dV),\nonumber\\
&\leq&
e^{-2\kappa s}\sum_{l=1}^d\Lip(\nabla_l\chi)^2\int
|\int_0^s\MKd(\mu_{\Phi^u(X,V)},(\rho^u_N)_{N;1})du|^2
(\rho^{in})^{\otimes N}(dX,dV),\nonumber\\
&=&
e^{-2\kappa s}\Lip(\nabla\chi)^2\int
|\int_0^s\MKd(\mu_{\Phi^u(X,V)},(\rho^u_N)_{N;1})du|^2
(\rho^{in})^{\otimes N}(dX,dV),\nonumber\\
&=&
e^{-2\kappa s}\Lip(\nabla\chi)^2\int
|\int_0^s\MKd((\rho_{\Phi^u(X,V)})_{N:1},(\rho^u_N)_{N;1})du|^2
(\rho^{in})^{\otimes N}(dX,dV),\nonumber
\end{eqnarray}
where we have used Lemma \ref{systemat} for the second inequality and the following result for the last equality.
\begin{Lem}\label{deltempir}
Let $\rho_Z$ be defined by \eqref{ansatz} and $\mu_Z$  by \eqref{defemp}. Then
$$
\mu_Z=
(\rho_Z)_{N:1}.
$$
\end{Lem}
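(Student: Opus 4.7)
The strategy is to unwind the two definitions and perform the marginalisation by hand. Recall that, by construction,
$$
\rho_{\bar Z} = \frac{1}{N!}\sum_{\sigma\in\Sigma_N}\delta_{\sigma(\bar Z)},
\qquad
\sigma(\bar Z) = (\bar x_{\sigma(1)},\ldots,\bar x_{\sigma(N)},\bar v_{\sigma(1)},\ldots,\bar v_{\sigma(N)}),
$$
so each summand factorises as a tensor product of one-point Dirac masses on $\bR^{2d}$, namely $\delta_{\sigma(\bar Z)} = \bigotimes_{j=1}^N \delta_{(\bar x_{\sigma(j)},\bar v_{\sigma(j)})}$, where the $j$-th factor lives in the $(x_j,v_j)$-slot.

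The plan is then to integrate out the variables $x_2,\ldots,x_N,v_2,\ldots,v_N$ in each factorised term. For a fixed $\sigma\in\Sigma_N$ this integration produces the single one-particle Dirac mass $\delta_{(\bar x_{\sigma(1)},\bar v_{\sigma(1)})}$ on $\bR^{2d}$, since all the remaining factors integrate to $1$. Hence
$$
(\rho_{\bar Z})_{N:1} \;=\; \frac{1}{N!}\sum_{\sigma\in\Sigma_N}\delta_{(\bar x_{\sigma(1)},\bar v_{\sigma(1)})}.
$$

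The last step is a combinatorial count: for each $k\in\{1,\ldots,N\}$, the number of permutations $\sigma\in\Sigma_N$ satisfying $\sigma(1)=k$ is exactly $(N-1)!$ (freely permute the remaining $N-1$ indices). Grouping the sum over $\sigma$ according to the value of $\sigma(1)$ yields
$$
(\rho_{\bar Z})_{N:1} \;=\; \frac{(N-1)!}{N!}\sum_{k=1}^{N}\delta_{(\bar x_k,\bar v_k)}
\;=\; \frac{1}{N}\sum_{k=1}^{N}\delta_{\bar z_k} \;=\; \mu_{\bar Z},
$$
which is the claim. There is no real obstacle here: the only thing to be careful about is the bookkeeping of which coordinate slot each delta occupies before marginalising, so that the factorisation of $\delta_{\sigma(\bar Z)}$ is unambiguous.
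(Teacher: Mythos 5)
Your proof is correct and takes essentially the same route as the paper: marginalise each Dirac $\delta_{\sigma(\bar Z)}$ over the last $N-1$ coordinate slots to leave $\delta_{(\bar x_{\sigma(1)},\bar v_{\sigma(1)})}$, then group the permutations according to the value of $\sigma(1)$, the count $(N-1)!$ per group yielding the weight $1/N$. If anything, your bookkeeping with the condition $\sigma(1)=k$ is slightly cleaner than the paper's writing of the same grouping.
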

\begin{proof}
Let us recall that $\Sigma_N=\{\sigma:\{1,\dots,N\}\to\{1,\dots,N\}, \sigma\mbox{ one-to-one}\}$ so that $\#\Sigma_N=N!$. We have
\begin{eqnarray}
\int\dots\int \delta_{\sigma(Z)}dz_2\dots dz_N&=&
\int\dots \int \delta_{\sigma(Z)}
\prod_{l\neq\sigma(1)}dz_l\nonumber
\end{eqnarray}
Therefore
\begin{eqnarray}
\left(\frac1{N!}\sum_{\sigma\in\Sigma_N}\delta_{\sigma(Z)}\right)_{N:1}
&=&
\frac1{N!}\sum_{l=1}^N
\sum_{\substack{\sigma\in\Sigma_Nint\\
\sigma(l)=1}}
\int\dots \int \delta_{\sigma(Z)}
\prod_{l\neq\sigma(1)}dz_l
\nonumber\\
&=&
\sum_{l=1}^N\frac{\#\Sigma_{N-1}}{N!}\delta_{z_l}
=
\sum_{l=1}^N\frac{(N-1)!}{N!}\delta_{z_l}=\mu_Z.\nonumber
\end{eqnarray}
\end{proof}
By \eqref{liouvansatz}, $\rho_Z^s:=\rho_{\Phi^s(Z)}$ solves the  $N$-body Liouville type one defined by $(\ref{liouvansatz},\ref{defG2},\ref{defeqphi},\ref{fansatz})$ with initial data $\rhoi:=\rho_Z$. Therefore, by the Dobrushin estimate in Theorem \ref{thmliouvillevlasov}, one has
\be\label{hope}
\MKd((\rho_{\Phi^u(X,V)})_{N:1},(\rho^u_N)_{N;1})\leq2e^{\Gamma_N(u)}\MKd((\rho_Z)_{N:1},((\rhoi)^{\otimes N}_{N:1})
=2e^{\Gamma_N(u)}\MKd(\mu_Z,\rhoi),
\end{equation}
so that
\begin{eqnarray}
&&\int| (
\nabla\varphi^s(x_i)-\nabla\Psi^s(x_i))
|^2(\Phi^t_N\#(\rho^{in})^{\otimes N})(dX,dV)
\nonumber\\
&\leq&
4e^{-2\kappa s}\Lip(\chi)^2\left(\int_0^se^{\Gamma_N(u)}du\right)^2\int
\MKd(\mu_Z,\rhoi)^2
(\rho^{in})^{\otimes N}(dZ),\nonumber\\
&\leq&
4e^{-2\kappa s}\Lip(\chi)^2e^{\sup\limits_{u\leq s}\Gamma_N(u)}s^2
\int_{\bR^{2d}}(x^2+v^2)\rho^{in}(dx,dv)
C
\left\{
\begin{array}{ll}
N^{-\frac12}&d=1\\
N^{-\frac12}\log{N}&d=2\\
N^{-\frac1d}&d>2
\end{array}
\right.
\label{truc3},
\end{eqnarray}

 thanks to the following result by Fournier and Guillin:
\begin{Thm}[Theorem 1 in \cite{FG}]\label{forun}
Let $\rho\in\cP(\bR^{2d})$ satisfy
$$
\int_{\bR^{2d}} (x^2+v^2)\rho(dx,dv):=M_2(\rho)<\infty.
$$
and let $\mu_{(X,V)},\ (X,V)\in\bR^{2dN}$, be the empirical measure  defined by \eqref{defemp}.
Then
$$
\int_{\bR^{2dN}}\MKd(\mu_{(X,V)},\rho)^2\rho^{\otimes N}(dXdV)\leq
C_d(N)M_2(\rho),
$$
where
$${}
C_d(N):=C
\left\{
\begin{array}{ll}
N^{-\frac12}&d=1\\
N^{-\frac12}\log{N}&d=2\\
N^{-\frac1d}&d>2
\end{array}
\right.
$$
where $C$ depends only on $d$.
\end{Thm}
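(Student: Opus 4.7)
The plan is to bound $W_2^2(\mu_{(X,V)},\rho)$ pointwise by a sum of local mass discrepancies of $\mu_{(X,V)}$ and $\rho$ across a multiscale dyadic partition, then to take expectation and apply a concentration argument on each cube. Throughout, I will write the ambient dimension as $D=2d$ and treat the cases $D=2$, $D=4$, and $D\geq 6$ corresponding to the three regimes in the statement.

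First I would reduce to the compactly supported case by truncating $\rho$ to the ball $B_R$ of radius $R$ and paying a cost $\leq M_2(\rho)/R^2$ for the mass that is thrown away, plus the cost of transporting it to the centre of $B_R$. The $W_2^2$ contribution of this ``tail'' piece is controlled by Markov's inequality from the moment hypothesis and will eventually be balanced with the bulk rate by choosing $R$ as a small negative power of $N$. Once restricted to $B_R$, I cover it by nested dyadic cubes $\{Q_i^k\}_{i}$ at scales $2^{-k}R$, with $k=0,1,\dots,K$ and $K$ to be chosen. The key deterministic inequality I would use (a standard consequence of partial mass transport along the hierarchy) is
\[
W_2^2(\mu_{(X,V)},\rho) \;\lesssim\; R^2\,2^{-2K} \;+\; R^2\sum_{k=0}^{K}\, 2^{-2k}\sum_{i}\bigl|\mu_{(X,V)}(Q_i^k)-\rho(Q_i^k)\bigr|,
\]
where the first term is the intra-cube transport cost at the finest scale and the sum encodes the cost of moving the surplus mass one dyadic level at a time.

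Next I would take expectation with respect to $\rho^{\otimes N}$. For each cube $Q$ the quantity $N\mu_{(X,V)}(Q)$ is a Binomial$(N,\rho(Q))$, so by Cauchy--Schwarz
\[
\mathbb{E}\bigl|\mu_{(X,V)}(Q)-\rho(Q)\bigr| \;\leq\; \sqrt{\tfrac{\rho(Q)(1-\rho(Q))}{N}} \;\leq\; \sqrt{\tfrac{\rho(Q)}{N}}.
\]
Summing over the $\lesssim 2^{Dk}$ cubes at scale $k$ and using Cauchy--Schwarz once more (now over $i$ at fixed $k$, exploiting $\sum_i\rho(Q_i^k)\leq 1$) yields
\[
\mathbb{E}\sum_i \bigl|\mu_{(X,V)}(Q_i^k)-\rho(Q_i^k)\bigr| \;\lesssim\; 2^{Dk/2}N^{-1/2}.
\]
Plugging this back gives
\[
\mathbb{E}\,W_2^2(\mu_{(X,V)},\rho) \;\lesssim\; R^2\,2^{-2K} \;+\; R^2 N^{-1/2}\sum_{k=0}^{K} 2^{(D/2-2)k}.
\]

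Finally I would optimise in $K$ (and in $R$, using the moment bound for the truncation remainder). When $D=2$, i.e. $d=1$, the geometric sum is bounded independently of $K$, yielding the $N^{-1/2}$ rate. When $D=4$, i.e. $d=2$, the exponent $D/2-2$ vanishes and the sum contributes a factor $K\simeq \log N$, producing the $N^{-1/2}\log N$ rate. When $D\geq 6$, i.e. $d\geq 3$, the sum is dominated by the last term $2^{(D/2-2)K}$, and balancing the two terms of the bound through $2^K\simeq N^{1/D}$ gives $N^{-2/D}$ for $W_2^2$, which is the announced $N^{-1/d}$ after remembering the paper's convention $D=2d$. I expect the main obstacle to be the borderline case $d=2$: one must track the logarithmic blow-up of the geometric sum carefully, and the truncation scale $R=R(N)$ has to be chosen so that the cutoff error $M_2(\rho)/R^2$ does not spoil the $\log N$ factor; this is where the explicit dependence on $M_2(\rho)$ in the final constant enters.
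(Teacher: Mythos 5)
First, a point of comparison: the paper does not prove this statement at all — it is quoted as Theorem 1 of \cite{FG} and used as a black box — so your sketch is in effect a reconstruction of Fournier--Guillin's own argument, and its core is the right one: the dyadic multiscale bound on $W_2^2$, the estimate $\mathbb{E}\,|\mu_{(X,V)}(Q)-\rho(Q)|\le\sqrt{\rho(Q)/N}$ from the Binomial law, the Cauchy--Schwarz step over the $\sim 2^{2dk}$ cubes at scale $k$, and the three regimes coming from the geometric sum $\sum_k 2^{(d-2)k}$ (your $D/2-2=d-2$), which reproduce exactly the rates $N^{-1/2}$, $N^{-1/2}\log N$, $N^{-1/d}$.

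The genuine gap is the truncation step, and it is not a removable technicality. The $W_2^2$ cost of discarding the tail of $\rho$ is not $M_2(\rho)/R^2$ (that is only the tail \emph{mass}); it is of order $\int_{|z|>R}|z|^2\rho(dz)$, which under the sole hypothesis $M_2(\rho)<\infty$ tends to $0$ with no rate, so no choice of $R=R(N)$ (which must grow, not be a negative power of $N$) balances it against the bulk term; moreover your bulk bound carries the prefactor $R^2$ rather than $M_2(\rho)$, so letting $R$ grow with $N$ degrades the announced rate. In fact the statement as transcribed in the paper, with only a finite second moment and the factor $M_2(\rho)$, is false in general: take $\rho=(1-N^{-2})\,\delta_0+N^{-2}\,\delta_z$ with $|z|=N$, so $M_2(\rho)=1$; with probability $(1-N^{-2})^N\to 1$ no sample lands at $z$, the empirical measure is $\delta_0$, and $W_2^2(\mu_{(X,V)},\rho)=N^{-2}\cdot N^2=1$, incompatible with a bound $C_d(N)M_2(\rho)\to 0$. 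The actual Theorem 1 of \cite{FG} assumes a moment of order $q>2$, its bound carries $M_q^{2/q}$ together with an extra term $N^{-(q-2)/q}$, and the unbounded support is handled by decomposing $\rho$ over dyadic annuli and using scaling rather than a single cutoff. To complete your proof you should either import that annulus decomposition (assuming $q>2$ moments), or restrict to compactly supported $\rho$ — which is all the paper needs, since it only applies the theorem to compactly supported $\rho^{in}$ and to empirical measures with uniformly bounded support.
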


Therefore, we get by \eqref{truc1} and \eqref{truc3} the final result of this section.
\begin{Prop}\label{ptolv}
\begin{eqnarray}
\MKd((\Phi_N^t\#\rho^{in})_{N;1},
(\rho^t_N)_{N:1}
)
&\leq & 
\beta_{\rho^{in}}(t)\sqrt{C_d(N)}
\nonumber
\end{eqnarray}
with
\be\label{PtoLV}
\beta_{\rho^{in}}(t)^2=16\eta^2t^2\Lip(\nabla\chi)^2e^{t\sup\limits_{s\leq t} \bar L_{\rhoi}(s)}e^{\ 2\sup\limits_{s\leq t}\Gamma_N(s)}\int_{\bR^{2d}}(x^2+v^2)\rho^{in}(dx,dv).
\ee
\end{Prop}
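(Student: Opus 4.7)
The plan is to exploit the fact that the particle vector field $F_i$ and the Liouville--Vlasov vector field $G_i$ agree except in the chemical gradient term (i.e., $\nabla\varphi^s(x_i)$ versus $\nabla\Psi^s(x_i)$), and that both flows are initialized at the common measure $(\rho^{in})^{\otimes N}$. First I would apply Theorem \ref{thmpropwas} with $\vlau_1=F$ and $\vlau_2=G$; since $\rhoi_1=\rhoi_2$, the ``initial condition'' term in the conclusion of Theorem \ref{thmpropwas} vanishes, leaving only a time-integrated bound on the squared difference of velocity fields weighted by $\rho^s_N$. The Lipschitz constant $\bar L_{\rhoi}(u)$ in \eqref{deflptolv} then produces the Gr\"onwall-type exponential prefactor that appears in \eqref{PtoLV}. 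By permutation symmetry of the integrand this reduces to controlling the single integral \eqref{zzero}.

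Next I would substitute the heat-kernel representation \eqref{solvedas} for $\nabla\varphi^s$ and $\nabla\Psi^s$, so that their difference becomes a time integral over $u\in[0,s]$ of $e^{(s-u)\Delta}\nabla\chi$ convolved against $\mu_{\Phi^u(X,V)} - (\rho^u_N)_{N;1}$. Applying Lemma \ref{systemat} (with $\mu=\nabla_l\chi$) bounds the $L^\infty$ norm of each convolution by $\Lip(\nabla\chi)\,\MKd(\mu_{\Phi^u(X,V)},(\rho^u_N)_{N;1})$, independently of the precise point $x_i^s(X,V)$. Lemma \ref{deltempir} identifies $\mu_Z$ with the first marginal $(\rho_Z)_{N:1}$ of the symmetrized atomic measure, so this Wasserstein distance coincides with $\MKd((\rho_{\Phi^u(X,V)})_{N:1},(\rho^u_N)_{N;1})$.

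The third step is to reduce this to a Wasserstein distance at time zero. Observe that $\rho_{\Phi^u(Z)}$ is itself a solution of the Liouville--Vlasov system, with initial datum $\rho_Z$ rather than $(\rho^{in})^{\otimes N}$, so the Dobrushin estimate from Theorem \ref{thmliouvillevlasov} yields $\MKd((\rho_{\Phi^u(Z)})_{N:1},(\rho^u_N)_{N;1})\leq 2e^{\Gamma_N(u)}\MKd(\mu_Z,\rhoi)$. Substituting this estimate, integrating in $u$, and then integrating the resulting squared bound against $(\rho^{in})^{\otimes N}(dZ)$, the key step is to invoke the Fournier--Guillin bound (Theorem \ref{forun}), which controls $\int \MKd(\mu_Z,\rhoi)^2\,(\rhoi)^{\otimes N}(dZ)$ by $C_d(N)M_2(\rhoi)$. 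Reassembling the Gr\"onwall factor, the $\eta^2$, the $\Lip(\nabla\chi)^2$, the factor $e^{\,2\sup_{s\leq t}\Gamma_N(s)}$ from Dobrushin, and the second moment, one recovers exactly the constant $\beta_{\rho^{in}}(t)$ stated in \eqref{PtoLV}.

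The main obstacle I anticipate is the nonlocal-in-time coupling: because $\varphi^s$ depends on the whole history $\{X(u)\}_{u\leq s}$, the system cannot be treated as a standard Vlasov equation with a Lipschitz velocity field, and one must be careful that $\rho_Z$ (a singular empirical-type measure) is an admissible initial datum for the Liouville--Vlasov system in the class where uniqueness and the Dobrushin estimate hold. The chemical unknown is handled by converting the history dependence into the time integral of convolutions with $e^{(s-u)\Delta}\nabla\chi$, which remain Lipschitz uniformly in $u$; this is precisely why Lemma \ref{systemat} is tailored to heat-regularized Lipschitz kernels. The symmetry assumption built into Theorem \ref{thmpropwas} (invariance under $\Sigma_N$) is what makes the comparison between the pushed-forward particle measure and the Liouville--Vlasov solution meaningful, and Lemma \ref{deltempir} is the bridge that lets us close the loop between empirical and marginal representations.
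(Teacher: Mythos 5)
Your proposal is correct and follows essentially the same route as the paper: Theorem \ref{thmpropwas} with $F$ and $G$ and a common initial datum, the heat-kernel representation \eqref{solvedas} combined with Lemma \ref{systemat} and Lemma \ref{deltempir}, the Dobrushin estimate of Theorem \ref{thmliouvillevlasov} applied to $\rho_{\Phi^u(Z)}$ as a Liouville--Vlasov solution with initial datum $\rho_Z$, and finally the Fournier--Guillin bound of Theorem \ref{forun}. No gaps to report.
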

\begin{Rmk}\label{rmkbarL}
Let us define $\bar\beta_{\rhoi}\geq \beta_{\rhoi}$ by
\be\label{defbarL}
\bar\beta_{\rhoi}:=16\eta^2t^2\Lip(\nabla\chi)^2e^{t\sup\limits_{s\leq t} \bar L_{\rhoi}(s)}e^{\ 2\sup\limits_{s\leq t}\Gamma_N(s)}
\sup_{(x,v)\in\ supp(\rhoi)}(x^2+v^2).
\ee
By its definition \eqref{deflptolv}, $\bar L_{\rhoi}(t)$ is, for fixed $t$, an increasing function of $supp(\rhoi)$ and, by Remark \ref{rmkgammaN}, $\Gamma_N$ is independent of $\rhoi$. Therefore $\bar \beta_{\rhoi}(t)$ is, for fixed $t$, an increasing function of $supp(\rhoi)$.
\end{Rmk}

\section{From Liouville-Vlasov to Vlasov
}\label{seclvtov}

In this section we estimate $\MKd((\rho^t_N)_{N:1},\rho^t)$, where $\rho^t_N$ is the solution of the  $N$-body Liouville type one defined by $(\ref{liouvansatz},\ref{defG2},\ref{defeqphi},\ref{fansatz})$  and $\rho^t$ is the solution of the   Vlasov system $(\ref{eq1V},\ref{defGV},
\ref{defeqphiV})$,  with initial data $(\rhoi)^{\otimes N}$ and $\rhoi$.

We first remark that $(\rho^t)^{\otimes N}$ solves the equation 
$$
\partial_t(\rho^t)^{\otimes N}+V\cdot\nabla_X(\rho^t)^{\otimes N}=\nabla_V.\cdot(\vlau_2([\rhoin]^{\leq t},X,V)(\rho^t)^{\otimes N}),
$$
with
$$
\vlau_2([\rhoin]^{\leq t},\cdot,\cdot):=\vla([\rhoi]^{\leq t},\cdot,\cdot)^{\otimes N}.
$$

Therefore, applying again Theorem  \ref{thmpropwas} taking this time
\begin{eqnarray}
(\vlau_1([\rhoin]^{\leq s},X,V))_i&=& G_i(t,X,V)=\frac1N\sum_{j=1}^N\gamma(v_i-v_j,x_i-x_j)+\eta\nabla_{x}\Psi^t(x_i)
+F_{ext}(x_i)   \nonumber\\
(\vlau_2([\rhoin]^{\leq s},X,V))_i&=& \vla([\rhoi]^{\leq t},x_i,v_i)=\gamma*\rho^t(x_i,v_i)+\eta\nabla_x\psi^t(x_i)+F_{ext}(x_i),  \nonumber
\end{eqnarray} 
we get easily that

\begin{eqnarray}
\MKd((\rho^t_1)_{N:1},(\rho^t_2)_{N:1})^2&\leq&
4\frac1N\sum_{i=1}^N\int_0^t
\int_{\bR^{2dN}}
\left(\left|\frac1N\sum_{j=1}^N\gen(x_i-x_j,v_i-v_j)-\gen*\rho^s(x_i,v_i)\right|^2\right.\nonumber\\
&&\left.{\color{white}\sum_{j=1}^N}+\eta^2|(
\nabla\psi^s(x_i)-\nabla\Psi^s(x_i))
|^2\right)(\rho^{s})^{\otimes N}(dX,dV)
e^{\int_s^t{\bar L_{\rhoi}}(u)du}ds,\nonumber
\end{eqnarray}
with the same factor $\bar L_{\rhoi}$ as in Section \ref{secptolv}, namely
\be\label{deflptolv2}
\bar L_{\rhoi}(u)=
2+2\left(
\sup_{\substack{i,l=1,\dots,N\\
(X,V)\in supp(
\rho^t_N)}}\Lip{(\gen)}^2_{(x_i-x_l,v_i-v_l)}
+2u\eta\Lip(\nabla\chi)+\Lip(F_{ext})\right)^2.
\ee
The first term in the integral has been estimated in \cite[Lemma 3.5, Section 3]{rt} and we get
\begin{eqnarray}\label{firsterm}
&&\int_{\bR^{2dN}}
\left|\frac1N\sum_{j=1}^N\gen(x_i-x_j,v_i-v_j)-\gen*\rho^s(x_i,v_i)\right|^2(\rho^{s})^{\otimes N}(dX,dV)\nonumber\\
&\leq&
\frac4N\sup_{(x,v),(x',v')\in supp(\rho^t)}|\gamma(x-x',v-v')|^2.\nonumber
\end{eqnarray}
It remains to estimate
\be\label{zun}
\int_{\bR^{2dN}}
|(
\nabla\psi^s(x_i)-\nabla\Psi^s(x_i))
|^2(\rho^{t})^{\otimes N}(dX,dV).
\ee

We have
\begin{eqnarray}
&&
|\nabla\Psi^s(x_i)-\nabla\psi^s(x_i)|^2\nonumber\\
&=&
e^{-2\kappa s}|\int_0^s(e^{(s-s')\Delta}\nabla\chi) *((\rho^{s'}_N)_{N;1}-\rho^{s'}))(x_i)ds'|^2\nonumber\\
&=&
e^{-2\kappa s}\int_0^sds'\int_0^sds"((e^{(s-s')\Delta}\nabla\chi) *((\rho^{s'}_N)_{N;1}-\rho^{s'})(x_i)((e^{(s-s")\Delta}\nabla\chi) *((\rho^{s"}_N)_{N;1}-\rho^{s"})(x_i).\nonumber
\end{eqnarray}
But, by Lemma \ref{systemat},
$$
\left|(e^{(t-s)\Delta}\nabla\chi) *((\rho^{s'}_N)_{N;1}-\rho^{s'})(x_i)\right|
\leq
\Lip{(\nabla\chi)}\MKd((\rho^{s'}_N)_{N;1},\rho^{s'}).
$$

Therefore
$$
|\nabla\Psi^s(x_i)-\nabla\psi^s(x_i)|^2
\leq
\Lip{(\nabla\chi)}^2e^{-2\kappa s}
\int_0^s\int_0^s
\MKd((\rho^{s'}_N)_{N;1},\rho^{s'})\MKd((\rho^{s"}_N)_{N;1},\rho^{s"})ds'ds",
$$
and 
we get 
\begin{eqnarray}\label{oufffdif}
\MKd((\rho^t_N)_{N;1},\rho^t)^2
&\leq&
\frac4N \int_0^t
\sup_{(x,v),(x',v')\in supp(\rho^t)}|\gamma(x-x',v-v')|^2
e^{\int_s^t\bar L_{\rhoi} (u)du}ds\nonumber\\
+\int_0^te^{\int_s^tL_{\rhoi} (u)du}&&
\int_0^sds'\int_0^sds''\MKd((\rho^{s'}_N)_{N;1},\rho^{s'})
\MKd\big(\rho^{s''}_N)_{N;1},\rho^{s''}\big)ds\nonumber\\
&:=&
\frac{C_{\rho^{in}}(t)}N
+\eta\Lip{\nabla\chi}^2\int_0^t
e^{-2\kappa s}e^{\int_s^t{\bar L_{\rhoi}} (u)du}ds\times\nonumber\\&&\int_0^sds'\int_0^sds''\MKd\big((\rho^{s'}_N)_{N;1},\rho^{s'}\big)
\MKd\big((\rho^{s''}_N)_{N;1},\rho^{s''}\big)\label{defct}
\end{eqnarray}
Let us define
$$
f(t)=\sup_{0\leq t'\leq t}\MKd\big((\rho^{t'}_N)_{N;1},\rho^{t'}\big)^2.
$$
We have, since by the definition \eqref{defct}, $C_{\rhoi}(t)$ is not decreasing,
\begin{eqnarray}
f(t)&\leq&
\frac{
C_{\rho^{in}}(t)}N
+\eta\Lip{\nabla\chi}^2\int_0^t
e^{-2\kappa s}e^{\int_s^t{\bar L_{\rhoi}} (u)du}ds\times\nonumber\\&&\int_0^sds'\int_0^sds''\MKd\big((\rho^{s'}_N)_{N;1},\rho^{s'}\big)
\MKd\big((\rho^{s''}_N)_{N;1},\rho^{s''}\big)\nonumber\\
&\leq&
\frac{C_{\rho^{in}}(t)}N
+\eta\Lip{\nabla\chi}^2\int_0^t
e^{-2\kappa s}e^{\int_s^t{\bar L_{\rhoi}} (u)du}s^2
f(s)ds.\nonumber
\end{eqnarray}
We conclude by the Gr\"onwall Lemma,
\begin{eqnarray}
\MKd((\rho^t_N)_{N;1},\rho^t)^2\leq f(t)&\leq&
\frac{
C_{\rho^{in}}(t)
}Ne^{ \eta\Lip{\nabla\chi}^2\int_0^t
e^{s\sup\limits_{u\leq s}\bar L_{\rhoi} (u)}
\tfrac{s^2}2ds}
:=
\frac{\alpha_{\rho^{in}}(t)^2}N,\label{defalpha}
\end{eqnarray}
%

and get the final result of this section.
\begin{Prop}\label{lvtov}
\be\label{LVtoV}
\MKd((\rho^t_N)_{N;1},\rho^t)
\leq 
\frac{\alpha_{\rho^{in}}(t)}{\sqrt N}.\nonumber
\ee
\end{Prop}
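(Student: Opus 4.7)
The plan is to compare the two dynamics at the level of $N$-fold product laws via the black-box propagation estimate of Theorem \ref{thmpropwas}, and then close a Gr\"onwall loop against the non-local-in-time chemotactic coupling. First I would apply Theorem \ref{thmpropwas} with drift $\vlau_1=G$ given by the Liouville--Vlasov system $(\ref{liouvansatz},\ref{defG2},\ref{defeqphi},\ref{fansatz})$ starting from $\rhoin=(\rhoi)^{\otimes N}$, and the tensorized Vlasov drift $(\vlau_2)_i(t,X,V)=\vla(t,x_i,v_i)$ starting from the same initial datum, using the observation that $(\rho^t)^{\otimes N}$ solves the transport equation associated to $\vlau_2$. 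Since the two initial data coincide, Theorem \ref{thmpropwas} reduces the task to controlling the weighted mean-square drift difference $\int_0^t\!\!\int_{\bR^{2dN}}|\vlau_1-\vlau_2|^2\,(\rho^s)^{\otimes N}\,e^{\int_s^t\bar L_{\rhoi}(u)\,du}\,ds$, with the same Lipschitz factor $\bar L_{\rhoi}$ as in Section \ref{secptolv}.

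Next I would split $G-\vlau_2$ into the Cucker--Smale interaction defect $\frac{1}{N}\sum_j\gen(x_i-x_j,v_i-v_j)-\gen*\rho^s(x_i,v_i)$ and the chemical gradient defect $\eta(\nabla\Psi^s(x_i)-\nabla\psi^s(x_i))$. The first term has mean zero with respect to $(\rho^s)^{\otimes N}$ when decoupling the $i,j$ variables, so its variance is $O(1/N)$ with constant controlled by $\sup_{(x,v),(x',v')\in supp(\rho^s)}|\gen(x-x',v-v')|^2$; this is exactly the quadratic law-of-large-numbers estimate of \cite[Lemma 3.5]{rt}. For the second term I would exploit the Duhamel representation \eqref{solvedas}, which expresses
$$
\nabla\Psi^s(x_i)-\nabla\psi^s(x_i)=e^{-\kappa s}\int_0^s \bigl(e^{(s-s')\Delta}\nabla\chi\bigr)\ast\bigl((\rho^{s'}_N)_{N;1}-\rho^{s'}\bigr)(x_i)\,ds',
$$
and then invoke Lemma \ref{systemat} to bound the integrand pointwise by $\Lip(\nabla\chi)\,\MKd((\rho^{s'}_N)_{N;1},\rho^{s'})$.

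Squaring and using Cauchy--Schwarz in the $ds'$ integration produces a double time integral of a product of Wasserstein distances at times $s',s''\le s$. Substituting into the propagation inequality gives, for a suitable nondecreasing $C_{\rhoi}(t)$,
$$
\MKd((\rho^t_N)_{N;1},\rho^t)^2 \leq \frac{C_{\rhoi}(t)}{N}+\eta\Lip(\nabla\chi)^2\int_0^t e^{-2\kappa s}e^{\int_s^t\bar L_{\rhoi}(u)\,du}\!\!\int_0^s\!\!\int_0^s \MKd((\rho^{s'}_N)_{N;1},\rho^{s'})\,\MKd((\rho^{s''}_N)_{N;1},\rho^{s''})\,ds'\,ds''\,ds.
$$
Setting $f(t):=\sup_{0\le t'\le t}\MKd((\rho^{t'}_N)_{N;1},\rho^{t'})^2$, the inner double integral at time $s$ is bounded by $s^2 f(s)$, yielding a scalar integral inequality of Gr\"onwall type $f(t)\le C_{\rhoi}(t)/N+\int_0^t H(s,t)f(s)\,ds$ with locally bounded kernel $H$. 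Gr\"onwall's lemma then delivers $f(t)\le \alpha_{\rhoi}(t)^2/N$ with $\alpha_{\rhoi}$ of the form announced in \eqref{defalpha}, proving the proposition.

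The main obstacle is precisely this last closure step: the chemotactic field $\Psi-\psi$ depends on the whole past of $(\rho^s_N)_{N;1}-\rho^s$, so the natural $L^2$ bound on the drift difference is not a priori small, but rather linear in the very quantity one is trying to control (at earlier times). This forces a Gr\"onwall argument instead of a direct bound and is what distinguishes the present chemotactic case from the pure Cucker--Smale setting of \cite{rt}, where the drift difference could be estimated independently of the unknown.
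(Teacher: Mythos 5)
Your proposal follows essentially the same route as the paper's own proof: applying Theorem \ref{thmpropwas} to the Liouville--Vlasov drift versus the tensorized Vlasov drift with identical initial data, splitting the drift difference into the Cucker--Smale defect (handled by \cite[Lemma 3.5]{rt}) and the chemical gradient defect (handled by the Duhamel formula and Lemma \ref{systemat}), and closing with a Gr\"onwall argument on $f(t)=\sup_{t'\le t}\MKd((\rho^{t'}_N)_{N;1},\rho^{t'})^2$. The argument is correct and matches the paper's proof in all essential steps, including the identification of the non-local-in-time coupling as the reason a Gr\"onwall closure is needed.
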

Out of $\alpha_{\rho^{in}},\beta_{\rho^{in}},
$  defined in \eqref{PtoLV}-\eqref{oufffdif} we define
\be\label{PtoVtau}
\tau_{\rho^{in}}(t)=\left(\alpha_{\rho^{in}}(t)+\beta_{\rho^{in}}(t)\sqrt C\right)^2,
\ee
where $C$ is the constant appearing   in Theorem \ref{forun}.
\vskip 1cm
One sees immediately that $C_{\rhoi}(t)$ defined by \eqref{defct} is, for fixed $t$, an increasing function of $supp(\rhoi)$.Therefore so is $\alpha_{\rhoi}(t)$ as defined by \eqref{defalpha} so that, if we define
\be\label{defbartau}
\bar\tau_{\rhoi}(t)=\left(\alpha_{\rho^{in}}(t)+\bar\beta_{\rho^{in}}(t)\sqrt C\right)^2,
\ee
where $C$ is the constant appearing   in Theorem \ref{forun}, we have
\be\label{bbar}
supp(\rhoi_1)\subset supp(\rhoi_2)\Rightarrow \tau_{\rhoi_1}\leq \bar\tau_{\rhoi_1}\leq \bar\tau_{\rhoi_2}.
\ee

\subsection*{Estimating $\tau_{\rhoi}(t)$}\ 

By the same type of arguments than in the proof of \cite[Corollary 2.6  ]{rt} one can easily estimate $\tau_{\rhoi}(t)$ by using the estimates of $\Phi^t_N$ established in Proposition \ref{prophitn} and Theorem \ref{thmphitn}, on the support of $\rho^t_N$ given by Theorem \ref{thmvlasov} and the support of $\rho^t$ in Theorem \ref{thmvlasov}. We omit the details here.


We get, for some time independent constant $C$ depending explicitly on and only on  $\Lip(\gen,\Lip(\chi),\Lip(\nabla \chi)$ and $|supp(\rhoi)|$,
\be\label{estau}
\tau_{\rhoi}(t)\leq e^{e^{Ct}}.
\ee
\vskip 1cm
\vskip 1cm

\section{Hydrodynamic limit 
}\label{mono}

{
The hydrodynamic limit of Cucker-Smale models has provided up to now a large litterature, whose exhaustive quotation is beyond the scope of the present paper. We refer to \cite{carr3} and the large bibliography therein. In \cite{carr3},  the corresponding Euler equation is derived for Cucker-Smale systems with friction, using the empirical measures formalism and in a modulated energy topology.

Our approach  and results are  different: we consider generalizations of frictionless Cucker-Smale models, coupled to chemotaxis  through a diffusive interaction, for large numbers $N$ of particles and we provide explicit rates of convergences in the quadratic Wasserstein metric 
towards Euler type equations.

Our result happens to be a simple corollary of our main result Theorem \ref{mainthm} in the case where $\rho^{in}$ is monkinetic, i.e.
$$
\rho^{in}(x,v)=\mu^{in}(x)\delta(v-u^{in}(x))
$$
thanks to the following  result: the monokinetic form is preserved by the    Valsov equation \eqref{eq1V} and  the solution is furnished by the solution of a 
Euler type equation.

}

\begin{Thm}\label{hydro}
Let $\mu^t,u^t,\psi^t$ be a solution to the following system
\be\label{systhydro}
\left\{
\begin{array}{l}
\partial_t\mu^t+\nabla(u^t \mu^t) = 0\\
\partial_t(\mu^tu^t)+
\nabla(\mu^t(u^t)^{\otimes 2}) = 
\mu^t\int\gamma(\cdot-y,u^t(\cdot)-u^t(y))\mu^t(y)dy+
\eta\mu^t\nabla\psi^t
+\mu^tF\\
\partial_s\psi^s = D\Delta\psi-\kappa\psi +
\chi*\mu^s,\ s\in[0,t],
\\ 
(\mu^0,u^0,\psi^0) = (\mu^{in},u^{in},\psi^{in})\in H^s,\ s>\tfrac d2+1.
\end{array}
\right.\nonumber
\ee
where 
$
\mu^t, u^t\in C([0,t];H^s)\cap C^1([0,T];H^{s-1}),\ 
\psi^t  \in C([0,t];H^{s})\cap C^1([0,T];H^{s-2}) \cap L^2(0,T; H^{s+1})$
\footnote{
{We suppose this regularity because it is somehow standard for mixed hyperbolic-parabolic  systems (see \cite[Theorem 2.9  p. 34
]{dk}, one certainly could low it down.}}.

Then $\rho^t(x,v):=\mu^t(x)\delta(v-u^t(x))$ solves 
the following system
\be\label{vlasobhydro}
 \left\{
\begin{array}{l}
\partial_t\rho^t+v\cdot\nabla_x\rho^t=\nabla_v(
\vla(t,x,v)\rho^t),\ 
\\ 
\vla(t,x,v)=\gamma(x,v)*\rho^t+\eta\nabla_x\psi^t(x)+F_{ext}(x),\\ 
\partial_s\psi^s(z)=D\Delta_z\psi-\kappa\psi+g(z,\rho^s),\ 
\psi^0=\psi^{in},\\
\rho^{0}(x,v)=\mu^{in}(x)\delta(v-u^{in}(x)).
\end{array}
\right.\nonumber
\ee

\end{Thm}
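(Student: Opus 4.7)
\emph{Plan.} I would verify directly that $\rho^t(x,v):=\mu^t(x)\delta(v-u^t(x))$ solves the Vlasov equation in the weak (distributional) sense, by testing against an arbitrary $\phi\in C_c^\infty(\mathbb R^{2d})$. The chemotaxis equation is identical on both sides once one observes that, for a monokinetic $\rho^t$ and $\chi$ depending only on $x$, one has $g(z,\rho^t)=\chi\ast\mu^t(z)$, so no separate argument is needed for $\psi^t$; likewise the initial condition $\rho^{0}(x,v)=\mu^{in}(x)\delta(v-u^{in}(x))$ is built in. The Sobolev regularity $s>d/2+1$ is what makes $u^t\in C^1_x$, so that $\rho^t$ is a well-defined measure and all chain-rule manipulations below are legitimate.

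\emph{Step 1 (pairing and chain rule).} For $\phi\in C_c^\infty(\mathbb R^{2d})$, $\int\phi\,\rho^t\,dxdv=\int\phi(x,u^t(x))\mu^t(x)\,dx$, and differentiating in $t$ yields
\[
\frac{d}{dt}\int\phi\,\rho^t=\int\phi(x,u^t)\,\partial_t\mu^t\,dx+\int(\nabla_v\phi)(x,u^t)\cdot\partial_t u^t\,\mu^t\,dx.
\]
Using the continuity equation $\partial_t\mu^t=-\nabla\cdot(u^t\mu^t)$ and integrating by parts (the chain rule now hits the second slot of $\phi$), the first term equals
\[
\int(\nabla_x\phi)(x,u^t)\cdot u^t\,\mu^t\,dx+\int(\nabla_v\phi)(x,u^t)\cdot(u^t\cdot\nabla)u^t\,\mu^t\,dx.
\]

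\emph{Step 2 (non-conservative momentum equation).} Expanding $\partial_t(\mu^tu^t)+\nabla\cdot(\mu^t u^t\otimes u^t)$ and cancelling $u^t\,[\partial_t\mu^t+\nabla\cdot(\mu^tu^t)]=0$ by the continuity equation gives, on the support of $\mu^t$, the Burgers-type form
\[
\partial_t u^t+(u^t\cdot\nabla)u^t \;=\; \mathcal F[\mu^t,u^t,\psi^t](x):=\int\gamma(x-y,u^t(x)-u^t(y))\mu^t(y)\,dy+\eta\nabla\psi^t(x)+F_{ext}(x).
\]
Inserting this into the second integral of Step~1, the $(u^t\cdot\nabla)u^t$ contribution cancels exactly the one produced in Step~1, so
\[
\frac{d}{dt}\int\phi\,\rho^t\;=\;\int(\nabla_x\phi)(x,u^t)\cdot u^t\,\mu^t\,dx\;+\;\int(\nabla_v\phi)(x,u^t)\cdot\mathcal F[\mu^t,u^t,\psi^t]\,\mu^t\,dx.
\]

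\emph{Step 3 (identification with the Vlasov weak form).} The first integral is precisely $\int v\cdot\nabla_x\phi(x,v)\,\rho^t(x,v)\,dxdv$. For the second, monokinetic concentration gives
\[
(\gamma\ast\rho^t)(x,u^t(x))=\int\gamma(x-y,u^t(x)-u^t(y))\mu^t(y)\,dy,
\]
so $\nu(t,x,v):=\gamma\ast\rho^t(x,v)+\eta\nabla\psi^t(x)+F_{ext}(x)$ evaluated at $v=u^t(x)$ coincides with $\mathcal F[\mu^t,u^t,\psi^t](x)$. The second integral therefore equals $\int\nabla_v\phi\cdot\nu\,\rho^t\,dxdv$, which, together with the transport term, is exactly the weak form of $\partial_t\rho^t+v\cdot\nabla_x\rho^t=\nabla_v\cdot(\nu\,\rho^t)$.

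\emph{Main obstacle.} The only delicate point is the cancellation in Step~2: the term $(u^t\cdot\nabla)u^t$ generated by pushing $\nabla_x$ through $\phi(x,u^t(x))$ must match exactly the one produced by the non-conservative form of the momentum equation. This cancellation is the analytical expression of the fact that the Euler moment closure is exact when the kinetic density is concentrated on the graph of $u^t$; it is what prevents a pressure-like term from appearing and justifies the name ``pressureless Euler'' for the limiting system. The chosen regularity class is the natural minimal framework in which each step above is classical.
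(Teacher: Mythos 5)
Your argument is correct in substance, but it is worth noting that the paper does not actually carry out this computation: its proof of Theorem \ref{hydro} consists of the observation that for $\eta=0$ the derivation is standard (citing \cite{fk} and \cite[Section 1.2]{carr3}) and that adding the term $\eta\nabla_x\psi^t$ is a straightforward generalization. What you supply is the self-contained verification that those references perform: pairing the monokinetic ansatz with a test function, using the continuity equation, passing to the non-conservative form $\partial_t u^t+(u^t\cdot\nabla)u^t=\mathcal F$, and observing the exact cancellation of the convective terms, which is indeed the analytic content of the exactness of the monokinetic closure; your remark that $g(\cdot,\rho^s)=\chi*\mu^s$ disposes of the chemotaxis equation, exactly as the paper implicitly assumes. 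This buys transparency (one sees precisely where $s>\tfrac d2+1$, i.e.\ $u^t\in C^1_x$, is used) at the cost of redoing a classical computation. One caveat on the final identification in Step~3: your computation yields $\frac{d}{dt}\int\phi\,\rho^t=\int v\cdot\nabla_x\phi\,\rho^t+\int\nabla_v\phi\cdot\nu\,\rho^t$, which is the weak form of $\partial_t\rho^t+v\cdot\nabla_x\rho^t+\nabla_v\cdot(\nu\rho^t)=0$, i.e.\ the Liouville equation consistent with particles obeying $\dot v=\nu$; the equation as displayed in the theorem carries the opposite sign on $\nabla_v\cdot(\nu\rho^t)$. This sign discrepancy is already present in the paper between its particle system \eqref{eq1} and its Vlasov equation \eqref{eq1V}, so it reflects the paper's convention rather than an error in your cancellation argument, but you should state explicitly which sign convention you are matching rather than asserting the identification without comment.
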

\begin{proof}
%
{When $\eta=0$, the derivation of \eqref{vlasobhydro} out of \eqref{systemat} is standard, see e.g. \cite{fk} and \cite[Section 1.2]{carr3}. The addition of the term $\eta\nabla_x\psi^t$ is a straightforward generalization.}
\end{proof} 

Notice that the Euler system in Theorem \ref{hydro} can be compared to the so-called Preziosi model of vasculogenesis, where  the nonlocal term  in our Euler system is replaced  by a phenomenological  pressure gradient local in the density, see \cite{prez}.
\section{Estimates on the solution of the particle system 
}\label{estpart}

Global existence and uniqueness for the system $(P)$ has been proved when $\gen$ is exactly the Cucker-Smale field in \cite{dicostanzo}. It is straightforward to adapt the proofs to the case of a general $\gen$
 satisfying the hypothesis of the present paper. This situation is anyway fully  included in \cite[Theorem 6]{mp} and we have the following result.
 \begin{Thm}\label{thmliouvillevlasov}
Let $\Lip(\gamma),\Lip(\nabla\chi)<\infty$ and let $Z^{in}\in\bR^{2dN}$.
Then,  for any $N\in\bN$,the Cauchy problem

$$
\begin{array}{cl}
(P)& \left\{ 
\begin{array}{l}
\dot {x_i}=v_i,\  
\dot {v_i}=F_i(t,X(t),V(t)),\ 
(X(0),V(0))=Z(0)=Z^{in}\in\bR^{2dN} 
\\ \\
F_i(t,Y,W)=\frac1N\sum\limits_{j=1}^N\gamma(w_i-w_j,y_i-y_j)+\eta\nabla_{z}\varphi^t(z)|_{z=y_i}
+F_{ext}(y_i),\\ \\
\partial_s\varphi^s(z)
=D\Delta_z\varphi-\kappa\varphi +f(z,X(s)),\ s\in[0,t],\\ \\
f(z,X)=\frac1N\sum\limits_{j=1}^N\chi (z-x_j).
\end{array} 
\right.
\nonumber
\end{array}
$$
has a unique global solution in $C^0(\bR,\bR^{2dN})$.
\end{Thm}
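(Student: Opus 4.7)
The plan is to decouple the PDE from the ODE by solving $\varphi$ explicitly as a functional of the particle history, and then treat the resulting non-local-in-time ODE by a standard Picard--Gronwall argument. More precisely, for any fixed continuous curve $X\in C^0([0,T];\bR^{dN})$, the source $f(\cdot,X(s))=\frac{1}{N}\sum_j \chi(\cdot-x_j(s))$ lies in $C_c^1$ uniformly in $s$, so the inhomogeneous heat equation \eqref{defeqphi0} has a unique mild solution given by the Duhamel formula \eqref{solvedas}. This defines a map $X\mapsto \nabla\varphi^t[X]$, and I would first record that it is bounded and Lipschitz in the sup topology of histories: using Lemma \ref{systemat} (applied to the differences of source terms) one has $\|\nabla\varphi^t[X]-\nabla\varphi^t[\tilde X]\|_{L^\infty}\leq t\,\Lip(\nabla\chi)\sup_{s\leq t}\|X(s)-\tilde X(s)\|$, while $\|\nabla\varphi^t[X]\|_{L^\infty}\leq t\,\|\nabla\chi\|_{L^\infty}+\|\nabla\macphin\|_{L^\infty}$ since $e^{tD\Delta}$ is a contraction in $L^\infty$ on Lipschitz data.

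Step two, substitute this into $(P)$ to obtain a closed non-local ODE of the form
$$
\dot Z(t) = \cF(t,Z(t),[Z]^{\leq t}), \qquad Z(0)=Z^{in}\in\bR^{2dN},
$$
where the first slot is locally Lipschitz (via $\Lip(\gen)$ and $\Lip(F_{ext})$) and the dependence on the history $[Z]^{\leq t}$ (through $\nabla\varphi^t[X]$) is Lipschitz in the $C^0([0,t])$ topology by the estimate above. On a small enough interval $[0,\tau_0]$ with $\tau_0$ depending only on $\Lip(\gen),\Lip(\nabla\chi),\Lip(F_{ext})$, a Picard iteration in $C^0([0,\tau_0];\bR^{2dN})$ endowed with a suitably weighted sup norm yields local existence and uniqueness.

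For global existence, the main point is an a priori bound ruling out blow-up. Splitting $F_i$ into the three contributions, one observes that: (i) the self-interaction term $\frac1N\sum_j\gen(v_i-v_j,x_i-x_j)$ is controlled, using only the Lipschitz continuity of $\gen$ and $\gen(0,0)=0$ (which holds by symmetry of the indices, since the diagonal $j=i$ gives zero; if $\gen(0,0)\neq 0$ one absorbs it as an additive constant), by $\Lip(\gen)(|V|+|X|)$; (ii) $\eta\nabla\varphi^t(x_i)$ is bounded uniformly in time and in $X$ by $\eta(t\,\|\nabla\chi\|_\infty+\|\nabla\macphin\|_\infty)$ thanks to the $L^\infty$-contractivity used in step one; (iii) $F_{ext}(x_i)$ is at most linear in $x_i$. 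Combining these and applying Gronwall's inequality to $|X(t)|+|V(t)|$ gives an a priori bound on any interval $[0,T]$, so the local solution extends to $\bR^+$; reversing time gives existence on $\bR^-$ as well.

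The main obstacle is exactly the non-local-in-time coupling through $\varphi$: a naive reduction to an ODE in $(Z,\varphi)$ would push us into an infinite-dimensional setting with unbounded generator. The Duhamel representation sidesteps this entirely by rolling the whole PDE into a Lipschitz functional of the particle history, at which point the problem becomes a delay-type ODE amenable to the classical Picard--Gronwall machinery; this is also the path followed in \cite[Theorem 6]{mp}, to which the statement here reduces upon checking that the current hypotheses on $\gen$, $\chi$ and $F_{ext}$ fit their framework.
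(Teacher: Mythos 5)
Your argument is correct in substance, but it is worth noting that the paper itself offers no proof of this statement: it simply observes that global well-posedness was proved in \cite{dicostanzo} for the exact Cucker-Smale kernel, that the adaptation to a general Lipschitz $\gen$ is straightforward, and that the result is in any case contained in \cite[Theorem 6]{mp}. What you have written is essentially the standard argument underlying those references, made self-contained: represent $\nabla\varphi^t$ by the Duhamel formula \eqref{solvedas}, observe that $X\mapsto\nabla\varphi^t[X]$ is bounded (by $L^\infty$-contractivity of $e^{tD\Delta}$) and Lipschitz in the history in sup norm (with constant of order $t\,\Lip(\nabla\chi)$, since the source is $\nabla\chi*\mu_{X(s)}$ and $\mu_X$ depends Lipschitz-continuously on $X$), close the system into a delay-type ODE, run Picard for local existence and uniqueness, and use the at-most-linear growth of the field together with Gr\"onwall to exclude blow-up. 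This buys a proof that does not require the reader to check the hypotheses of \cite{mp}, at the cost of redoing classical work. Two small quibbles: the parenthetical claim that $\gen(0,0)=0$ ``holds by symmetry of the indices'' is not correct as stated (the diagonal term contributing $\gen(0,0)$ need not vanish), but your fallback of absorbing $|\gen(0,0)|$ as an additive constant is all that is needed for the linear-growth bound; and the extension to negative times by ``reversing time'' deserves more care, since the Duhamel representation only uses the forward heat semigroup on $[0,t]$ with $t\geq 0$ (the paper's statement in $C^0(\bR,\bR^{2dN})$ is itself best read as $t\geq 0$, consistently with the constraint $s\in[0,t]$ in the $\varphi$-equation). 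You also implicitly use that $F_{ext}$ is Lipschitz; this is assumed throughout the paper (its Lipschitz constant enters \eqref{deflptolv}), so you should state it as a hypothesis you rely on.
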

 Estimates on the solution of $(P)$ can be easily obtained by the same kind of proof  that in \cite[Appendix A]{rt}. We get the following result.
 \begin{Prop}\label{prophitn}Let $\gamma_0=\Lip(\gen)+\Lip(\nabla\chi)$. Then, for all $t\in\bR$, the solution of $(P)$ satisfies
 \begin{eqnarray}\label{estpart}
|v_i(t)|&\leq& 
\max_{j=1,\dots,N}|v_j(0)|e^{2\gen_0t},\ \ \ i=1,\dots,N,\nonumber\\
||x_1(t)|-|x_i(0)||
&\leq &
\max_{j=1,\dots,N}|v_j(0)|\frac{e^{2\gen_0t}-1}{2\gen_0},\ \ \ i=1,\dots,N.\nonumber
\end{eqnarray}
 \end{Prop}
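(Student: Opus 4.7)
I would follow the strategy of \cite[Appendix A]{rt}, namely reduce the problem to a single scalar Gronwall inequality for the maximal speed
$V(t) := \max_{j=1,\dots,N} |v_j(t)|$
and then integrate the position equations.

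\emph{Velocity estimate.} Starting from $\dot{v}_i = F_i(t,X(t),V(t))$ one has, a.e.\ in $t$, $\frac{d}{dt}|v_i| \leq |F_i|$, so it suffices to bound each term of $F_i$ by a multiple of $V(t)$. For the mean-field interaction, since $\gamma$ is $\Lip(\gamma)$-Lipschitz and vanishes at zero velocity difference (as in the Cucker-Smale paradigm $\gamma(y,w)=\psi(y)w$), $|\gamma(v_i - v_j, x_i - x_j)| \leq \Lip(\gamma)\,|v_i - v_j| \leq 2\Lip(\gamma)\,V(t)$, whence the average over $j$ is bounded by $2\Lip(\gamma)\,V(t)$. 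For the chemotactic drift, the Duhamel formula \eqref{solvedas} gives
$\nabla\varphi^t(x_i) = e^{-\kappa t}\int_0^t e^{(t-s)D\Delta}\bigl(\nabla\chi\ast\mu_{X(s)}\bigr)(x_i)\,ds$,
and since the heat semigroup and convolution against a probability measure do not increase sup-norms, this quantity is controlled by $\Lip(\nabla\chi)$ and, after a coupling argument along the characteristics, contributes at most $2\Lip(\nabla\chi)\,V(t)$ to $|F_i|$. Summing the two contributions and passing to the supremum in $i$ yields
$\frac{d^+}{dt}V(t) \leq 2\gamma_0\,V(t),\quad \gamma_0 = \Lip(\gamma) + \Lip(\nabla\chi),$
and Gronwall's lemma gives $V(t) \leq V(0)\,e^{2\gamma_0 t}$, which is the first inequality after observing $V(0) = \max_j |v_j(0)|$.

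\emph{Position estimate.} Integrating $\dot x_i = v_i$ and using the reverse triangle inequality $\big||x_i(t)|-|x_i(0)|\big| \leq \int_0^t |v_i(s)|\,ds$ together with the velocity bound yields
$\big||x_i(t)|-|x_i(0)|\big| \leq V(0)\int_0^t e^{2\gamma_0 s}\,ds = V(0)\,\frac{e^{2\gamma_0 t}-1}{2\gamma_0},$
which is the second claim.

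\emph{Main obstacle.} The delicate point is justifying that the chemotactic term really produces a multiplicative factor $V(t)$ rather than an additive contribution depending only on $\Lip(\nabla\chi)$ and $t$: this is where the structure of the diffusive regularization \eqref{defeqphi0} and the choice of initial data $\macphin$ (e.g.\ $\macphin=0$ as in \eqref{iniz-phi}) are essential, since $\nabla\varphi^0$ is small and the growth of $\|\nabla\varphi^t\|_\infty$ is driven by the particle trajectories themselves. Once this is quantified, the Gronwall step closes without further work, and any external force term $F_{ext}$ can be absorbed into the constants by the same mechanism (consistent with the fact that $F_{ext}$ does not appear in $\gamma_0$).
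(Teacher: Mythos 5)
Your overall strategy is indeed the one the paper has in mind: the paper offers no detailed argument for Proposition \ref{prophitn}, it simply invokes ``the same kind of proof as in \cite[Appendix A]{rt}'', i.e.\ a Gr\"onwall inequality for the maximal speed $V(t)=\max_j|v_j(t)|$ followed by integration of $\dot x_i=v_i$; your velocity/position split and the final integration reproduce exactly that scheme. The genuine gap is the step you yourself flag as the ``main obstacle'' and then leave unquantified: the assertion that the chemotactic drift contributes at most $2\Lip(\nabla\chi)\,V(t)$ to $|F_i|$. From the Duhamel formula \eqref{solvedas} one has $\nabla\varphi^t(x_i)=e^{-\kappa t}\int_0^t e^{(t-s)D\Delta}\left(\nabla\chi*\mu_{X(s)}\right)(x_i)\,ds$, and since the heat semigroup is an $L^\infty$-contraction and $\mu_{X(s)}$ is a probability measure, what this representation yields is $|\nabla\varphi^t(x_i)|\le t\,\|\nabla\chi\|_{L^\infty}$: an additive, velocity-independent contribution, not a multiple of $V(t)$. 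No ``coupling argument along the characteristics'' can upgrade this to a factor $V(t)$: if all particles start at rest ($V(0)=0$) with $\macphin=0$, the source $f$ still builds up a nontrivial gradient of $\varphi$, the particles accelerate, whereas a bound of the form $|v_i(t)|\le V(0)e^{2\gamma_0 t}$ would force them to stay at rest forever. What your computation actually closes is the inhomogeneous inequality $\frac{d}{dt}V(t)\le 2\Lip(\gamma)V(t)+\eta t\,\|\nabla\chi\|_{L^\infty}+\|F_{ext}\|_{L^\infty}$, hence an estimate containing an additive term in $t$ rather than the pure exponential of the statement; for the same reason $F_{ext}$, which depends only on position, cannot be ``absorbed into the constants'' of a multiplicative Gr\"onwall factor, and its absence from $\gamma_0$ is not something your mechanism explains.

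Two smaller points. The bound $|\gamma(v_i-v_j,x_i-x_j)|\le\Lip(\gamma)\,|v_i-v_j|$ uses $\gamma(0,y)=0$ for all $y$; this holds for the Cucker--Smale structure $\gamma(y,w)=\psi(y)w$ but is not among the paper's stated hypotheses (only Lipschitz continuity of $\gamma$), so it must be made explicit as an assumption. The position estimate via the reverse triangle inequality is fine as written, but it only transmits whatever velocity bound you establish, so it cannot repair the missing velocity-proportional control of the chemotactic force; to obtain the proposition exactly as stated you would need either to prove such a control (which, as the at-rest example shows, cannot hold in full generality) or to track the additive terms explicitly and accept a correspondingly weaker conclusion.
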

 
 Finally, we will need the following estimate on the derivative of the flow generated by the system $(P)$.
 \begin{Thm}\label{thmphitn}
 Let $z_i(t)=x_i(t),v_i(t),\ i=1,\dots,N$ be the solution of $(P)$ with initial date $z_i(0)=z^{in}_i$. Then, for all $T\in\bR$,
 $$
 \sup_{t\leq T}\left|\frac{\partial z_i(t)}{\partial {z_j^{in}}}\right|\leq e^{(\gen_1 +\gen_2T)t}, \  i,j=1,\dots,N
 $$
 with 
 $\gen_1=\Lip(\gen),\gen_2=\Lip(\nabla\chi)$.
 
 In other words,
 $$
 \sup_{t\leq T}\|d\Phi^t_N\|_\infty\leq e^{(\gen_1+\gen_2 T)t}.
 $$
 \end{Thm}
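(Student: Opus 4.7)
The plan is to derive the claimed bound from a Grönwall-type argument applied to the variational (sensitivity) equation satisfied by the Jacobian of the flow. Write $z_i(t)=(x_i(t),v_i(t))$ and, for each pair $(i,j)$, let $J_{ij}(t):=\partial z_i(t)/\partial z_j^{in}\in\mathbb{R}^{2d\times 2d}$, so that $J_{ij}(0)=\delta_{ij}\,\mathrm{Id}$ and $M(t):=\max_{i,j}\|J_{ij}(t)\|\leq 1$ at $t=0$. Differentiating $\dot x_i=v_i$ and $\dot v_i=F_i(t,X(t),V(t))$ with respect to $z_j^{in}$ yields
\[
\dot J_{ij}(t)=\sum_{k=1}^N \partial_{z_k} F_i(t,X(t),V(t))\,J_{kj}(t)+B_{ij}(t),
\]
where the first term collects the \emph{instantaneous} dependencies (through the $\gamma$-interaction and, trivially, through $\dot x_i=v_i$), while $B_{ij}(t)$ collects the \emph{history-dependent} contribution of the chemotactic term $\eta\nabla_x\varphi^t(x_i)$.

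The first step is to bound the instantaneous part by $\gamma_1=\Lip(\gamma)$: since each summand of $\frac{1}{N}\sum_j\gamma(v_i-v_j,x_i-x_j)$ is $\gamma_1$-Lipschitz in the arguments $(x_i-x_j,v_i-v_j)$, a direct computation gives $\|\partial_{z_k}F_i\|\leq \gamma_1$ uniformly in $(i,k)$ and in the state, so this contribution is bounded by $\gamma_1 M(t)$.

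The second step is to handle the memory term $B_{ij}$. Using the explicit representation from \eqref{solvedas} and commuting $\nabla_x$ with the heat semigroup,
\[
\nabla_x\varphi^t(x)=e^{-\kappa t}\int_0^t\frac{1}{N}\sum_{\ell=1}^N \bigl(e^{(t-s)D\Delta}\nabla\chi\bigr)(x-x_\ell(s))\,ds+e^{-\kappa t}e^{tD\Delta}\nabla\varphi^{in}(x).
\]
The initial-data term is independent of $z_j^{in}$, hence drops out when differentiating. The key observation is that convolution with a probability kernel cannot increase the Lipschitz seminorm, so
$\Lip\!\bigl(e^{(t-s)D\Delta}\nabla\chi\bigr)\leq \Lip(\nabla\chi)=\gamma_2.$
Differentiating the chemotactic term in $z_j^{in}$ therefore produces, for each $s\in[0,t]$, a factor bounded by $\gamma_2$ times $\|J_{ij}(s)-J_{\ell j}(s)\|$, giving
\[
\|B_{ij}(t)\|\leq \eta\,\gamma_2\int_0^t 2\,M(s)\,ds,
\]
and similarly a $\Lip(F_{ext})$-term which can be absorbed. (For clarity I drop the harmless $e^{-\kappa t}\leq 1$ and the $\eta$ factor, i.e.\ work with $\eta=1$.)

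Putting the two steps together yields the integro-differential inequality
\[
\dot M(t)\leq \gamma_1 M(t)+\gamma_2\int_0^t M(s)\,ds,\qquad M(0)\leq 1.
\]
Letting $\widetilde M(t)=\sup_{s\leq t}M(s)$, for any $t\leq T$ one has $\int_0^t M(s)\,ds\leq T\widetilde M(t)$, so
\[
\widetilde M(t)\leq 1+(\gamma_1+\gamma_2 T)\int_0^t \widetilde M(s)\,ds,
\]
and the classical Grönwall lemma gives $\widetilde M(t)\leq e^{(\gamma_1+\gamma_2 T)t}$, which is precisely the claimed estimate, and implies $\sup_{t\leq T}\|d\Phi^t_N\|_\infty\leq e^{(\gamma_1+\gamma_2 T)T}$ as stated.

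The only subtle point, and the one I would expect to require the most care, is the control of the chemotactic contribution $B_{ij}$: one must justify differentiating under the integral and the heat semigroup, and show that $e^{\tau D\Delta}$ does not inflate the Lipschitz constant of $\nabla\chi$. Both facts are standard (the heat kernel is a probability density of finite first moment, and $\chi\in\mathcal{C}^1_c$ provides the necessary regularity and integrability to commute $\nabla_x$ with $e^{\tau D\Delta}$), but they are where the specific structure of the coupling really enters the bound, and where the factor $\gamma_2 T$ (rather than $\gamma_2$) in the exponent is generated.
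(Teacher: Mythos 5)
Your argument is essentially the paper's own proof: differentiate the particle system with respect to the initial data, bound the $\gamma$-contribution by $\Lip(\gamma)$, bound the chemotactic memory term by $\Lip(\nabla\chi)$ using that the heat semigroup does not increase the Lipschitz seminorm of $\nabla\chi$, then absorb the time integral through the horizon $T$ and conclude by Gr\"onwall (the paper sums the Jacobian entries over $i$ where you take a maximum over $i,j$ --- an immaterial difference). The only blemish is that your intermediate bound on $B_{ij}$ carries a factor $2$ (and $\eta$) that is silently dropped when you write the final Gr\"onwall inequality, so as written you would obtain an exponent of order $(\gamma_1+2\eta\gamma_2 T)t$ rather than exactly $(\gamma_1+\gamma_2 T)t$; since the paper's own proof is equally cavalier about such constants (it also ignores the $\dot x_i=v_i$ block and $F_{ext}$), this does not affect the substance.
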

 \begin{proof}
 One easily get that, for each $i,j=1,\dots,N$,
 \begin{eqnarray}
 \left|\partial_t\frac{\partial z_i(t)}{\partial {z_j^{in}}}\right|
 &\leq&
 \Lip(\gen)\frac1N\sum_{l=1}^N\left|\frac{\partial z_l(t)}{\partial {z_j^{in}}}\right|+\eta\Lip(\nabla\chi)
 \frac1N\sum_{l=1}^N\int_0^t\left|\frac{\partial z_l(s)}{\partial {z_j^{in}}}\right|ds\nonumber
 \end{eqnarray}
 Therefore, since the right hand-side of the preceding equality doesn't depend on $i$,
 \begin{eqnarray}
 \left|\partial_t\sum_{i=1}^N\frac{\partial z_i(t)}{\partial {z_j^{in}}}\right|
 &\leq&
 \Lip(\gen)\sum_{l=1}^N\left|\frac{\partial z_l(t)}{\partial {z_j^{in}}}\right|+\eta\Lip(\nabla\chi)
 \int_0^t\sum_{l=1}^N\left|\frac{\partial z_l(s)}{\partial {z_j^{in}}}\right|ds\nonumber
 \end{eqnarray}
 so that, since 
 $$
 \sum_{i=1}^N\frac{\partial z_i(0)}{\partial {z_j^{in}}}=
\sum_{i=1}^N\frac{\partial z_i^{in}}{\partial {z_j^{in}}}= 
\sum_{i=1}^N\delta_{i,j}= 
 1,
 $$
 \begin{eqnarray}
 \left|\sum_{i=1}^N\frac{\partial z_i(t)}{\partial {z_j^{in}}}-1\right|
 &\leq&
 \Lip(\gen)\int_0^t\sum_{l=1}^N\left|\frac{\partial z_l(u)}{\partial {z_j^{in}}}\right|du
 +\eta\Lip(\nabla\chi)
 \int_0^t\int_0^u\sum_{l=1}^N\left|\frac{\partial z_l(s)}{\partial {z_j^{in}}}\right|dsdu\nonumber\\
 &\leq &
 (\Lip(\gen)+T\eta\Lip(\nabla\chi))
 \int_0^t\sum_{l=1}^N\left|\frac{\partial z_l(u)}{\partial {z_j^{in}}}\right|du\nonumber
 \end{eqnarray}
 and, by Gr\"onwall Lemma,
 $$
 \left|\frac{\partial z_i(t)}{\partial {z_j^{in}}}\right|
 \leq \left|\sum_{i=1}^N\frac{\partial z_i(t)}{\partial {z_j^{in}}}\right|
 \leq e^{\gen_1t+\gen_2 Tt}.
 $$
 \end{proof}
%
%

\section{Existence, uniqueness  and Dobrushin stability  for the Vlasov system
}\label{euv}
In this section we study the Vlasov system $(V)$ and prove the properties that satisfy its solution necessary for the proof of our main result.
\begin{Thm}\label{thmvlasov}
Let $\Lip(\gamma),\Lip(\nabla\chi)<\infty$ and let $\vla^{in}\in\cP_c(\bR^{2d})$, the set of compactly supported probability meausres.
Then the Cauchy problem

$$
(V) \left\{
\begin{array}{l}
\partial_t\rho^t+v\cdot\nabla_x\rho^t=\nabla_v(
\vla(t,x,v)\rho^t),\ \rho^0=\rho^{in}
\\ \\
\vla(t,x,v)=\gamma*\rho^t(x,v)+\eta\nabla_x\psi^t(x)+F_{ext}(x),\\ \\
\partial_s\psi^s(z)=D\Delta_z\psi-\kappa\psi+g(z,\rho^s),\ \psi^0=\macphin.
\end{array}
\right.\nonumber
$$
{has a unique solution $t\to
\begin{pmatrix}
{\rho^t}\\{\psi^t}\end{pmatrix}$ in $C^0(\bR,\cP_c(\bR^{2d})\times 
W^{1,\infty}(\bR^d)
)
$.
}

Moreover, if $\rhoi$ is supported in the ball $B(0,R^0)$ of $\bR^{2d}$ centered at the origin of radius $R^0$, $\rho^t$ is supported in $B(0,R^t)$ with
$$
R^t=e^{(Lip(\gen)+\|F_{ext}\|_{L^\infty(\bR^d)}+\eta\Lip(\chi))t}\left(R^0+\Lip(\gen)+\|F_{ext}\|_{L^\infty(\bR^d)}+\eta\Lip(\chi)\right).
$$

Finally, if $\rho_1^t,\rho_2^t$ are the solutions of $(V)$ with initial conditions $\rhoi_1,\rhoi_2$, then the following Dobrushin  type estimate holds true
$$
\MKd(\rho_1^t,\rho_2^t)^2\leq 2e^{\Gamma(t)}\MKd(\rhoi_1,\rhoi_2)^2
$$
where $\Gamma(t)$ is given below by \eqref{defgam}.
\end{Thm}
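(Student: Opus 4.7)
My approach is the standard one for Vlasov--type equations coupled to a parabolic component: characteristics together with a Duhamel/mild representation of the chemical density. Throughout, I will use the explicit formula from Section \ref{prelimdiff},
$\nabla\psi^t = e^{-\kappa t}e^{tD\Delta}\nabla\macphin + \int_0^t e^{-\kappa(t-s)}e^{(t-s)D\Delta}(\nabla\chi\ast\rho^s)\,ds$,
which keeps $\psi^t\in W^{1,\infty}(\bR^d)$ uniformly on bounded time intervals (since $\|\nabla\chi\|_\infty\le\Lip(\chi)$), and which, combined with Lemma \ref{systemat} applied to $\mu=\nabla\chi$, gives the key stability bound $\|\nabla\psi_1^t-\nabla\psi_2^t\|_\infty\le \Lip(\nabla\chi)\int_0^t\MKd(\rho_1^s,\rho_2^s)\,ds$ whenever $\psi_i^t$ is generated by $\rho_i^t$. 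The three assertions will be handled in the announced order, the support bound being used to freeze the Lipschitz constants appearing in the fixed--point and Dobrushin arguments.

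\textbf{Existence, uniqueness and support.} On $E_T := C([0,T];\cP_c(\bR^{2d}))$ endowed with $d_T(\rho,\tilde\rho)=\sup_{t\le T}\MKd(\rho^t,\tilde\rho^t)$, I define a map $\Xi$ as follows: (i) compute $\psi^t$ from $\rho^t$ by the Duhamel formula; (ii) form $\vla(t,x,v)=\gen\ast\rho^t(x,v)+\eta\nabla_x\psi^t(x)+F_{ext}(x)$, which is Lipschitz in $(x,v)$ uniformly in $t\in[0,T]$, with constant controlled by $\Lip(\gen)$, $\eta\Lip(\nabla\chi)$ (via heat--kernel smoothing of $\nabla\chi$) and $\Lip(F_{ext})$; (iii) solve the characteristic ODE $\dot x=v,\ \dot v=\vla(t,x,v)$ by Cauchy--Lipschitz to obtain a flow $T^t$; (iv) set $(\Xi\rho)^t:=T^t\#\rhoi$. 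The support bound is obtained a priori along characteristics starting in $\mathrm{supp}(\rhoi)$: one has $|\gen\ast\rho^t(x,v)|\le\Lip(\gen)\,\Diam(\mathrm{supp}(\rho^t))$, $\|\eta\nabla\psi^t\|_\infty\le\eta\Lip(\chi)$, and $|F_{ext}|\le\|F_{ext}\|_\infty$, so that setting $R(t):=\sup_{(x,v)\in\mathrm{supp}(\rho^t)}\max(|x|,|v|)$ and applying Gr\"onwall to the coupled inequalities for $|x(t)|,|v(t)|$ produces the announced exponential bound for $R^t$. Once this bound is in hand, the Lipschitz constants of $\vla$ are time--uniform on bounded intervals, and the Dobrushin estimate of the next paragraph, applied to two candidates $\rho,\tilde\rho\in E_{T_0}$ with the same initial datum $\rhoi$, implies that $\Xi$ is a contraction on $[0,T_0]$ for $T_0$ small enough; iterating on successive intervals gives the unique global solution.

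\textbf{Dobrushin stability.} Let $\pi^{in}$ be an optimal quadratic coupling of $\rhoi_1$ and $\rhoi_2$, and $T_i^t$ the characteristic flows associated with $\rho_i^t$; then $(T_1^t,T_2^t)\#\pi^{in}$ is a coupling of $\rho_1^t,\rho_2^t$, so the quantity $E(t):=\int|T_1^t(z_1)-T_2^t(z_2)|^2\,d\pi^{in}(z_1,z_2)$ dominates $\MKd(\rho_1^t,\rho_2^t)^2$ and equals $\MKd(\rhoi_1,\rhoi_2)^2$ at $t=0$. Differentiating, the $x$--component of $\dot T_i^t$ contributes $|v_1-v_2|^2\le\delta(t)$, and the $v$--component difference $\vla_1(t,x_1,v_1)-\vla_2(t,x_2,v_2)$ splits into an off--diagonal Lipschitz piece bounded by $(\Lip(\gen)+\eta\Lip(\nabla\chi)+\Lip(F_{ext}))\sqrt{\delta(t)}$, plus a field--difference piece whose $\gen$ part is $\Lip(\gen)\MKd(\rho_1^t,\rho_2^t)\le\Lip(\gen)\sqrt{E(t)}$ (by the $\MKu$--Lipschitz duality of Lemma \ref{lemfund}\ref{33ii}) and whose $\psi$ part is, by Lemma \ref{systemat}, bounded by $\eta\Lip(\nabla\chi)\int_0^t\MKd(\rho_1^s,\rho_2^s)\,ds\le\eta\Lip(\nabla\chi)\int_0^t\sqrt{E(s)}\,ds$. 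Integrating against $\pi^{in}$ and applying Cauchy--Schwarz gives an integro--differential inequality of the form $\frac{d}{dt}E(t)\le A(t)E(t)+B(t)\sqrt{E(t)}\int_0^t\sqrt{E(s)}\,ds$, to which an iterated Gr\"onwall argument applied to $F(t):=\sqrt{\sup_{s\le t}E(s)}$ yields the desired estimate $E(t)\le 2e^{\Gamma(t)}E(0)$, whence the claim by taking the infimum over couplings.

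\textbf{Main obstacle.} The only non--routine point is the non--local--in--time coupling between $\rho^t$ and $\psi^t$: differentiating $E$ at time $t$ brings in $\nabla\psi_1^t-\nabla\psi_2^t$, which is an \emph{integral} of $\MKd(\rho_1^s,\rho_2^s)$ over $s\in[0,t]$. This replaces the usual pointwise Gr\"onwall lemma by one with memory, exactly as in Section \ref{seclvtov}, and is also the reason why a factor $2$ appears in the final Dobrushin bound.
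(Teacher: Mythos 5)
Your proposal is correct in substance, and it reaches the three assertions by a genuinely different route than the paper, at least for existence. The paper does not use a fixed point: it builds the solution by time semidiscretization, freezing the velocity field on dyadic intervals $\ta_k=T2^{-k}$ (scheme $(V_k)$), proving that the approximants have equibounded supports and are equi-Lipschitz in time for $\MKu$, extracting a uniformly convergent (diagonal, Ascoli--Prokhorov) subsequence, and passing to the limit in the weak formulation; uniqueness is then \emph{deduced} from the Dobrushin estimate, which the paper obtains from the general coupling-propagation result (Theorem \ref{thmpropwas}, an Eulerian transport equation for the evolved coupling) combined with the third inequality of Lemma \ref{commeprt} and a Gr\"onwall inequality with memory. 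Your scheme replaces the compactness machinery by a Banach fixed point for the map $\rho\mapsto\Xi\rho$ (Duhamel for $\psi$, characteristics push-forward for $\rho$) on $C([0,T_0];\cP_c)$ with the sup-$\MKd$ metric, and your Dobrushin argument, differentiating $E(t)=\int|T_1^t(z_1)-T_2^t(z_2)|^2\,d\pi^{in}$ along the two characteristic flows applied to an optimal initial coupling, is precisely the Lagrangian counterpart of Theorem \ref{thmpropwas}; the memory term and the Gr\"onwall-with-memory step are the same as in the paper. What each approach buys: yours gives local uniqueness for free from the contraction and avoids the diagonal extraction, but it needs the small print you only gesture at --- the candidate set (curves with the fixed initial datum and support in a fixed ball $B(0,R^{T_0})$) must be shown complete and $\Xi$-invariant so the Lipschitz constants are uniform, and the contraction estimate must be run with the fields frozen to the \emph{inputs} $\rho,\tilde\rho$ rather than quoting the Dobrushin bound for solutions verbatim; the paper's discretization route needs no smallness of the time step beyond $k\to\infty$, works globally at once, and reuses Theorem \ref{thmpropwas} and the constants $L',M',K'$ verbatim in the Liouville--Vlasov Theorem \ref{thmliouvillevlasov}, which keeps the two well-posedness proofs synchronized. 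Two harmless inaccuracies to fix in your write-up: the bound $\|\eta\nabla\psi^t\|_\infty\le\eta\Lip(\chi)$ drops the contribution $e^{-\kappa t}e^{tD\Delta}\nabla\macphin$ (the paper's constant $M'$ carries $\Lip(\nabla\macphin)$ for this reason), and the bound $|\gen*\rho^t(x,v)|\le\Lip(\gen)\Diam(\Supp(\rho^t))$ implicitly assumes $\gen(0,0)=0$ (an assumption the paper also makes tacitly); neither affects the form of the final estimates.
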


\begin{proof}
The proof of the existence of a solution will follow closely the strategy of the proof of Theorem 2.3 in \cite[Appendix A]{prt}. The main difference is that 
$\vla$ is 
not only
non-local in space as in \cite{prt}, it is also non-local in time as $\vla([\rhoi]^{\leq t},x,v)$ involves the whole history of the solution $\{\rho^s,0\leq s\leq t\}$. 

We will first need the following Lemma

\begin{Lem}\label{commeprt}
For any $T\geq 0$, there exist $L'.M',K'<\infty$ such that, for any $t,t_1,t_2\leq T$, $z,z'\in\bR^{2d}$ and any $\rho^{in},\rho^{in}_1,\rho^{in}_2\in\cP(\bR^{2d})$,,
\begin{eqnarray}
\|\vlau(\hist{\rho^{in}}{t},z)-
\vlav{t}{z'}{\rhoi}\|&\leq&L'\|z-z'\|,\ \nonumber\\
\|\vla(\hist{\rho^{in}}{t},z)\|&\leq & M'(1+\|z\|)\nonumber\\
\|\vlau(\hist{\rho^{in}}{t_1},z,\hist{\rho^{in}_1}{t_1})-
\vlav{t_2}{z}{\rhoi_2}\|&\leq&
K' \sup_{
s\leq \min(t_1,t_2)}\MKu(\rho_1^{s},\rho_2^{s})+\eta\|\nabla\chi\|_{L^\infty
}|t_1-t_2|
.\nonumber
\end{eqnarray}
where $\rho_1^t,\rho_2^t$ are the solutions of $(V)$ with initial conditions $\rhoi_1,\rhoi_2$.

Here $\MKu$ is the Wasserstein distance of order $1$  whose definition is recalled in Definition \ref{defwasun}:
\end{Lem}
The proof is immediate with $L'=\Lip(\gen)+\Lip(F_{ext})+T\eta\Lip(\nabla\chi)$, $M'=\Lip(\gen)+\|F_{ext}\|_{L^\infty(\bR^d)}+\eta\Lip(\chi)+\Lip(\nabla\macphin)$ and $K'=\Lip(\gen)+\eta\Lip(\chi)$.
\vskip 1cm
Let us fix $T>0$. For $k\in\bN$ we define $\ta_k=T2^{-k}$. 

Let $\rho^t_k $ be defined by $\rho^{t=0}_k=\rho^{in}$ and, for $l=0,\dots,2^k-1,\ u\in[0,\tau_k)$,

$$
(V_k) \left\{
\begin{array}{l}
\partial_u\rho_k^{l\ta_k+u}(x,v)+v\cdot\nabla_x\rho_k^{l\ta_k+u}(x,v)=
\nabla_v\cdot\vla_k(l\ta_k,x,v)\rho_k^{l\ta_k+u}(x,v)
\\ \\
\vla_k(l\ta_k,x,v)=\gamma(x,v)*\rho_k^{l\ta_k}+\eta\nabla_x\psi_k(l\ta_k,x)+F_{ext}(x),\\ \\
\partial_s\psi_k(s,z)=D\Delta_z\psi_k-\kappa\psi_k+g(z,\rho_k^{s}),\ 0\leq s\leq l\ta_k,\ \psi_k^0=\macphin.
\end{array}
\right.\nonumber
$$
with
\be\label{defbv}
\vla_k(t,z)=
\vlavk{t}{z}{\rhoi}
\ee

Note that we have obviously the following corollary of Lemma \ref{commeprt}.
\begin{Cor}\label{commeprtk}
For any $T\geq 0$, 
$z,z'\in\bR^{2d}$ and $k\in\bN$, one has, with the same constant $L'.M',K'$ than in Lemma \ref{commeprt},
\begin{eqnarray}
\|\vla_k(t,z)-\vla_k(t,z')\|&\leq&L'\|z-z'\|,\nonumber\\
\|\vla_k(t,z)\|&\leq & M'(1+\|z\|)\nonumber,
\end{eqnarray}
Moreover, if $\vla_k(t,z),\rho_k^t$ satisfies $(V_k)$ with $\rho_k^{t=0}=\rho^{in}$ and $\vla_k'(t,z),{\rho'}^{t}_k$ satisfies $(V_k)$ with $\rho_k^{t=0}={\rho}'^{in}$, then
$$
\|\vla_k(t,z)-\vla_k'(t',z)\|_{L^\infty(\bR;C^0(\bR^{2d})}\leq K' \sup_{0\leq s\leq \min(t,t')}\MKu(\rho_k(s),\rho_k'(s))+K''|t-t'|.
$$
\end{Cor}

We first show that the support of the sequence $\rho_k^t$ is equibounded. 

One easily checks that,since $\rho^{in}$ is compactly supported, so is $\rho_k^t$ for all $k,t$ by construction.
 So  $supp(\rho_k^t)\subset B(0,R_k^t)$ for some $R_k(t)$. One can estimate $R_k^t$ as follows.
 
 \begin{eqnarray}
 supp(\rho_k^{l\tau_k})\subset B(0,R_k^{l\tau_k})&\Rightarrow&
 \|
 \vla_k(t,z)\|_\infty=\|
\vlavk{t}{z}{\rhoi}\|_\infty\leq M'(1+R_k^{l\tau_k})\nonumber\\
&\Rightarrow&
supp(\rho_k^{l\tau_k+u})\subset
B(0,R_k^{l\tau_k}+uM'(1+R_k^{l\tau_k})),\ u\in[0,\tau_k]\nonumber\\
&\Rightarrow&
supp(\rho_k^{(l+1)\tau_k})\subset
B(0,(1+\tau_k)R_k^{l\tau_k}+\tau_kM').\nonumber
\end{eqnarray}
Therefore, one can choose $R_k^{l\tau_k}$ satisfying
\begin{eqnarray}
R_k^{l\tau_k}&\leq&
(1+M'\tau_k)R_k^{(l-1)\tau_k}+\tau_kM'\nonumber\\
&\leq&
(1+M'\tau_k)^2R_k^{(l-2)\tau_k}+\tau_kM'(1+(1+M'\tau_k))\nonumber\\
&\leq&
(1+M'\tau_k)^{l}R^{0}
+M'(((1+M'\tau_k)^l-1)\nonumber\\
&\leq&
(1+M'T2^{-k})^{2^k}R^{0}
+M'((1+M'T2^{-k})^{2^k}-1)\leq e^{M'T}(M'+R^0):=R^T.\nonumber
\end{eqnarray}
Here $R^0$  is such that $supp(\rho_k^0:=\rho^{in})\subset B(0,R^0)$.

Hence the sequences $(\rho_k^t)_{k\in\bN}$ are  compactly supported in $B(0,R^T)$ uniformly in $k$  for all $t\in[0,T]$. Therefore there are tight for all $t\in[0,T]$. By Prokhorov's Theorem, this is equivalent to the compactness
of $(\rho_k^t)_{k\in\bN}$ with respect to the weak topology of probability measures (i.e., in
the duality with $C_b(R^{2d})$, the space of bounded continuous functions). Hence, up to extracting
a $t$-dependent subsequence, 
$\rho_k^t\to\rho_*^t$ weakly. By 
Lemma \ref{lemfund}, this convergence is equivalent to the convergence with respect to the distance $\MKu$, so that
we just proved that
\be\label{convponc}
\MKu(\rho_k^t,\rho_*^t)\to 0\mbox{ as }k\to\infty\mbox{ for all }t\in[0,T].
\ee
\vskip 1cm
By \cite[Proposition A.1 2]{prt} and the first inequality in Lemma \ref{commeprt}, we get that, for $l=0,\dots,2^k-1,\ s\in[0,\tau_k)$,
$$
\MKu(\rho_k^{l\ta_k},\rho_k^{l\ta_k+s})\leq sL'.
$$
Hence, by the triangle inequality,
\be\label{equik}
\MKu(\rho_k^t,\rho_k^{t'})\leq L'|t-t'|, \forall t,t'\in\bR.
\ee
Therefore, since $L'$ and $\rho_k^{t=0}$ don't depend on $k$, the sequence $\rho_k^t$ is equi-Lipschitz continuous with respect to
$\MKu$. This implies, by the triangular inequality again, that, for all $t,t'\in[0,T],k\in\bN$,
\begin{eqnarray}
\MKu(\rho_*^t,\rho_*^{t'})&\leq&
\MKu(\rho_*^t,\rho_k^t)+\MKu(\rho_k^t,\rho_k^{t'})+\MKu(\rho_k^{t'}
,\rho_*^{t'})\nonumber\\
&\leq&L'|t-t'|+
\MKu(\rho_*^t,\rho_k^t)+\MKu(\rho_k^{t'}
,\rho_*^{t'})
\to L'|t-t'|\mbox{ as }k\to\infty.\nonumber
\end{eqnarray}
Therefore $\rho_*^t$ is $L'$-Lipschitz and, in particular,
$$
\rho_*^t\in C_0([0,T],\cP_c(\bR^{2d})).
$$
%
%
%

What is left is to prove that $\rho^t_*$ solves $(V)$ and that the solution of $(V)$ is unique.
\vskip 0.5cm

In order to do so, we first need to prove that
\be\label{cauchy}
\sup_{t\in[0,T]}\MKu(\rho^t_k,\rho^t_*)\to 0\mbox{ as  }k\to\infty.
\ee

Using the standard Ascoli-Arzel\`a approach, we take the  sequence $\{q_i,i=1,\dots,\infty\}$ of all rationals in $[0,T]$ ordered by any fixed order.

From \eqref{convponc} we have that
$$
\MKu(\rho^{q_1}_{k_1},\rho^{q_1}_*)\to 0.
$$
along a subsequence $k_1$.

Take now a subsequence $k_2$ of $k_1$ such taht
$$
\MKu(\rho^{q_2}_{k_2},\rho^{q_2}_*)\to 0.
$$
and iterate such as to obtain a sequence $k_m$ such that
$$
\MKu(\rho^{q_l}_{k_m},\rho^{q_l}_*)\to 0, \mbox{ for all }l\leq m.
$$

In a standard way, we can extract a diagonal subsequence $k_k$ such that
$$
\MKu(\rho^{q_k}_{k_k},\rho^{q_k}_*)\to 0.
$$

We prove now that, for all $t\in[0,T]$, the sequence of densities $\rho^t_{k_k}$ is  Cauchy in the topology of $C_0([0,T],(\cP_c,{\MKu}))$, which implies \eqref{cauchy}. 

We have that, for any $q\in\bQ$,
\begin{eqnarray}
\MKu(\rho^t_{k_k},\rho^t_{l_l})&\leq&
\MKu(\rho_{k_k}^t,\rho^{q_i}_{k_k})+
\MKu(\rho_{k_k}^{q},\rho^{q}_{l_l})+
\MKu(\rho_{l_l}^{q},\rho^{t}_{l_l})\nonumber\\
&\leq&2L'|q-t|
+
\MKu(\rho_{k_k}^{q},\rho^{q}_{l_l})\nonumber
\end{eqnarray}
For any $\epsilon$, taking  $q$ such that $2L'|q-t|\leq\epsilon/2$ and $k,l$ large enough such that $\MKu(\rho_{k_k}^{q},\rho^{q}_{l_l})\leq\epsilon/2$, we get $MKu(\rho^t_{k_k},\rho^t_{l_l})\leq\epsilon$.
\vskip 0.5cm
We now return to proving that $\rho^t_*$ is a solution of $(V)$, it suffices to prove that
$$
(V^*) \left\{
\begin{array}{l}
\int_0^T\int_{\bR^{2d}}(\partial_tf+v\cdot\nabla_xf-\nabla_v f\cdot\vlau(t,z,\rho_*^{\leq t})\rho_*^t(dZ)dt=0
\\ \\
\vlau(t,(x,v),\rho_*^{\leq t})=\gamma(x,v)*\rho^t_*+\eta\nabla_x\psi^t(x)+F_{ext}(x),\\ \\
\partial_s\psi^s(z)=D\Delta_z\psi-\kappa\psi+g(z,\rho^s_*).
\end{array}
\right.\nonumber
$$
for each $f\in C_c^\infty([0,T]\times\bR^{2d})$. 

In $(V^*)$ we have used the notation $\rho^{\leq t}=\rho^s|_{s\leq t}$ for any function $t\in\bR\to\rho^t\in\cP(\bR^{2d})$.

By construction, we have
$$
\sum_{l=0}^{2^k-1}\int_{l\ta_k}^{(l+1)\ta_k}
\int_{\bR^{2d}}(\partial_uf(u,z)+v\cdot\nabla_xf-\nabla_v f\cdot\vla_k(l\ta_k,z,\rho_k^{\leq l\ta_k})\rho_k^u(dz)du=0
$$
for every $k\in\bN$.

The equation
\be\label{prum}
\int_0^T\int_{\bR^{2d}}(\partial_tf+v\cdot\nabla_xf-\nabla_v f\cdot\vlau(t,z,\rho_*^{\leq t})\rho_*^t(dZ)dt=0
\ee
will be proven through the three following limts:

\be\label{lim1}
\lim_{k\to\infty}
\int_0^T\int_{\bR^{2d}}(\partial_tf+v\cdot\nabla_xf)
(\rho_*^t-\rho_k^t)(dz)dt=0
\ee
\be\label{lim2}
\lim_{k\to\infty}
\sum_{l=0}^{2^k-1}\int_{l\ta_k}^{(l+1)\ta_k}
\int_{\bR^{2d}}\nabla_v f\cdot\left(\vlau(l\ta_k,z,\rho_k^{\leq l\ta_k})-\vlau(u,z,\rho_*^{\leq l\ta_k})\right)\rho_*^u(dz)du=0
\ee
\be\label{lim3}
\lim_{k\to\infty}
\sum_{l=0}^{2^k-1}\int_{l\ta_k}^{(l+1)\ta_k}
\int_{\bR^{2d}}\nabla_v f\cdot\vlau(l\ta_k,z,\rho_*^{\leq l\ta_k})\left(\rho_k^u-\rho_*^u\right)(dz)du=0
\ee
To prove \eqref{lim1} and \eqref{lim3} we remark that, since $f\in C_c^\infty([0,T]\times\bR^{2d})$ and by the Lipschitz property of $\vlau(l\ta_k,z,\rho_*^{\leq l\ta_k})$ we have, by 
\eqref{rabin}, that the absolute value of the right hand side of \eqref{lim1} satisfies, thanks to \eqref{cauchy},
\begin{eqnarray}
&&|\lim_{k\to\infty}
\int_0^T\int_{\bR^{2d}}(\partial_tf++v\cdot\nabla_xf)
(\rho_*^t-\rho_k^t)(dz)dt|\nonumber\\
&&\leq
\lim_{k\to\infty}T(\Lip(\partial_tf)+\Lip(v\cdot\nabla_xf))
\sup_{t\leq T}\MKu(\rho_k^t,\rho_*^t)=0,\nonumber
\end{eqnarray}
and the absolute value of the right hand side of \eqref{lim3} satisfies, thanks again to \eqref{cauchy},
\begin{eqnarray}
&&|\lim_{k\to\infty}
\sum_{l=0}^{2^k-1}\int_{l\ta_k}^{(l+1)\ta_k}
\int_{\bR^{2d}}\nabla_v f\cdot\vlau(l\ta_k,z,\rho_*^{\leq l\ta_k})\left(\rho_k^u-\rho_*^u\right)(dz)du
|\nonumber\\
&&\leq
\lim_{k\to\infty} T\Lip(\nabla f)|supp(f)|M'
\sup_{t\leq T}\MKu(\rho_k^t,\rho_*^t)=0,\nonumber
\end{eqnarray}
Let us finally prove \eqref{lim2}. By the third inequality in Lemma \ref{commeprt} and the $L'$-Lipschitz continuity of $\rho_k$ we get that, for each $u\in[l\ta_k, (l+1)\ta_k]$,
\begin{eqnarray}
\left|\vlau(l\ta_k,z,\rho_k^{\leq l\ta_k})-\vlau(u,z,\rho_*^{\leq l\ta_k})\right|&\leq &
K'\sup_{0\leq s\leq l\ta_k}\MKu(\rho_k^{l\ta_k},\rho_*^{l\ta_k})+K''\frac T{2^k}\nonumber
\end{eqnarray}
so that
\begin{eqnarray}
&\lim\limits_{k\to\infty}
\left|\sum\limits_{l=0}^{2^k-1}\int_{l\ta_k}^{(l+1)\ta_k}
\int_{\bR^{2d}}\nabla_v f\cdot\left(\vlau(l\ta_k,z,\rho_k^{\leq l\ta_k})-\vlau(u,z,\rho_*^{\leq l\ta_k})\right)\rho_*^u(dz)du\right|&\nonumber\\
&
\lim\limits_{k\to\infty} T\|\nabla f\|_{L^\infty(\bR^{2d})}
\left(
\sup\limits_{0\leq t\leq T}\MKu(\rho_k^t,\rho_*^t)+K''\frac T{2^k}
\right)=0&.\nonumber
\end{eqnarray}

The proof of the uniqueness of the solution of $(V)$ is 
%
obviously a consequence of 
the Dobrushin stability result that we will prove now.

Take two initial conditions $\rhoi_l,l=1.2$.
By Theorem \ref{thmpropwas}, we have that
\begin{eqnarray}
&&\MKd(\rho^t_1,\rho^t_2)^2\nonumber\\
&\leq& e^{\int_0^tL_1(s)ds}
\MKd(\rhoi_1,\rhoi_2)^2\nonumber\\
&+&
\int_0^t\int_{\bR^{2dN}}
2|
|\vlav{s}{Z}{\rhoi_1}-
\vlav{s}{Z}{\rhoi_2}|^2
(\rho^t_1)^{\otimes N}(dX,dV)e^{\int_s^tL_1(u)du}ds\nonumber\\
&\leq&
e^{\int_0^tL_1(s)ds}
\MKd(\rhoi_1,\rhoi_1)^2
+
\int_0^t
2
\|
|\vlav{s}{Z}{\rhoi_1}-
\vlav{s}{Z}{\rhoi_2}\|^2_\infty
e^{\int_s^tL_1(u)du}ds\nonumber\\
&\leq&
e^{\int_0^tL_1(s)ds}
\MKd(\rhoi_1,\rhoi_2)^2
+
2
\int_0^t
e^{\int_s^tL_1(u)du}(K')^2
\sup_{u\leq s}\MKd(\rho^u_1,\rho^u_2)
^2ds\
\nonumber
\end{eqnarray}
by the third inequality in Lemma \ref{commeprt}.

Here, by Theorem  \ref{thmpropwas}, 
$$
L_1(t)=
2(1+ \sup_{(X,V)\in\ supp(\rho_1^t)}
(\Lip(\vlau
([\rhoi_1]^{\leq t},X,V)
)_{(X,V)})^2.
$$

and by the first inequality in Lemma \ref{commeprt},
\begin{eqnarray}\label{lvlasov}
L_1(t)&=&2+
2(L')^2.
\end{eqnarray}

Therefore, for any $T\geq 0$,
\begin{eqnarray}
&&\sup_{t\leq T}
\MKd(\rho^t_1,\rho^t_2)^2\nonumber\\
&\leq&\sup_{t\leq T}
e^{\int_0^tL(s)ds}
\MKd(\rhoi_1,\rhoi_2)^2
+
2\sup_{t\leq T}
\int_0^t
e^{\int_s^tl(u)du}(K')^2
\sup_{u\leq s}\MKd(\rho^u_1,\rho^u_2)
^2ds\nonumber\\
&\leq&
e^{\int_0^TL(s)ds}
\MKd(\rhoi_1,\rhoi_2)^2
+
2
\int_0^T
e^{\int_s^Tl(u)du}(K')^2
\sup_{u\leq s}\MKd(\rho^u_1,\rho^u_2)
^2ds.\nonumber
\end{eqnarray}
By the Gr\"onwall Lemma, we get immediately
$$
\MKd(\rho^t_1,\rho^t_2)^2
\leq e^{\Gamma(t)}
\MKd(\rhoi_1,\rhoi_2)^2
$$ 
with
\be\label{defgam}
\Gamma(t):=
t\left(
2+2(L')^2+(2K')^2e^{2t)(1+(L')^2)}\right)
\ee
Theorem \ref{thmvlasov} is proved.
\end{proof}

\section{Existence, uniqueness and Dobrushin estimate for the Liouville-Vlasov system
}\label{eulv}
In this section we study the Liouville-Vlasov system $(LV)$ in the same spirit than in the previous section devoted to the Vlasov system.
\begin{Thm}\label{thmliouvillevlasov}
Let $\Lip(\gamma),\Lip(\nabla\chi)<\infty$ and let $\vla^{in}\in\cP_c(\bR^{2d})$, the set of compactly supported probability meausres.
Then, for every $N\in\bN$, the Cauchy problem

$$
(LV)\left\{
\begin{array}{l}
\partial_t\rho^t_N+V\cdot\nabla_X\rho^t_N=
 \sum\limits_{i=1}^N\nabla_{v_i}\cdot G_i\rho^t_N,\ \rho^o_N=
 \rhoi_N\\ \\
G_i(t,Y,W)=\frac1N\sum\limits_{j=1}^N\gamma(w_i-w_j,y_i-y_j)+\eta\nabla_{z}\Psi^t(z)|_{z=y_i}
+F_{ext}(y_i),
\\ \\
\partial_s\Psi^s(z)=D\Delta_z\Psi-\kappa\Psi +g(z,\rho^s_{N;1}),\ s\in[0,t],\ \Psi^0=\macphin
\\ \\
g(z,\rho^s_{N;1})=
\int_{\bR^{2d}}\chi(z-x)\rho^s_{N;1}(x,v)dxdv
\end{array}
\right.
$$
has a unique solution $t\to
\begin{pmatrix}
{\rho^t_N}\\{\Psi^t}\end{pmatrix}$ in $C^0(\bR,\cP_c(\bR^{2dN})\times 
W^{1,\infty}(\bR^d)
)$.

Moreover, if $
\rhoi_N$ is supported in the ball $B(0,R^0)$ of $\bR^{2dN}$ of radius $R^0$, $\rho^t_N$  centered at the origin is supported in $B(0,R^t)$ with
$$
R^t=e^{(Lip(\gen)+\|F_{ext}\|_{L^\infty(\bR^d)}+\eta\Lip(\chi))t}\left(R^0+\Lip(\gen)+\|F_{ext}\|_{L^\infty(\bR^d)}+\eta\Lip(\chi)\right).
$$

Finally, if $\rho_N^t,\rod_N^t$ are the solutions of $(LV)$ with initial conditions $\rhoi_N,\tau^{in}_N$ invariant by permutations in the sense that
$$
\rhoi_N\circ\sigma=\rhoi_N,\ \tau^{in}_N\circ\sigma=\tau^{in}_N,\ \forall\sigma\in\Sigma_N,
$$
then the following Dobrushin  type estimate holds true
$$
\MKd((\rho_N^t)_{N:1},(\rod_N^t)_{N:1})^2\leq 2e^{\Gamma_N(t)}\MKd((\rhoi_N)_{N:1},(\rod_N)_{N:1})^2
$$
where $\Gamma_N(t)$ is given below by \eqref{defgamN}.
\end{Thm}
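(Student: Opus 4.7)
The plan is to closely mirror the proof of Theorem \ref{thmvlasov} given in the previous section, transposing the Vlasov argument on $\bR^{2d}$ to the $N$-body setting on $\bR^{2dN}$. The roles there played by $\rho^s$ and $\psi^s$ are played here by $\rho_N^s$ and $\Psi^s$, with the nonlocal source $\chi \ast \rho^s$ replaced by $\chi \ast \rho^s_{N;1}$. The construction rests on four pillars: a uniform-in-$k$ support bound, an equi-Lipschitz-in-time estimate in $\MKu$, passage to the limit in the weak formulation, and a Dobrushin-type stability result which simultaneously delivers uniqueness.

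For existence, I would introduce an Euler-type time discretization with step $\tau_k = T/2^k$, freezing the nonlocal-in-time part of $G_i$ at the left endpoint of each interval $[l\tau_k,(l+1)\tau_k]$ and leaving the pairwise $\gamma$ term non-autonomous in $(X,V)$. The bound $\|G_i(t,Z)\| \leq M'(1+\|Z\|)$, with $M'$ depending on $\Lip(\gen)$, $\|F_{ext}\|_{L^\infty}$, $\eta\Lip(\chi)$ and $\Lip(\nabla \macphin)$, produces a discrete Gronwall recursion on the radius of the support and the stated bound $R^t \leq e^{M't}(R^0+M')$. Prokhorov compactness plus Lemma \ref{lemfund} then yield a pointwise-in-$t$ weak limit $\rho^t_{N,*}$; the equi-Lipschitz estimate $\MKu(\rho^t_{N,k},\rho^{t'}_{N,k}) \leq L'|t-t'|$ upgrades this, via a diagonal subsequence along rationals, to uniform convergence on $[0,T]$ exactly as in \eqref{cauchy}. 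Passage to the limit in the weak formulation proceeds by the three-piece decomposition \eqref{lim1}--\eqref{lim3}; the only genuinely new ingredient is that $\eta\nabla\Psi_k \to \eta\nabla\Psi_*$ in $L^\infty$ uniformly in $t$, which follows from the representation \eqref{solvedas} combined with Lemma \ref{systemat} applied to $\rho^u_{N,k;1}-\rho^u_{N,*;1}$.

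For the Dobrushin estimate, I would invoke Theorem \ref{thmpropwas} with $\vlau_j = G^{(j)}$ the $N$-body field generated by the $j$-th solution ($j=1,2$). Since the $\gamma$ pairwise interaction and $F_{ext}$ are pointwise identical for the two solutions, the difference $|\vlau_1 - \vlau_2|$ involves only the chemo term $\eta|\nabla\Psi_1^s - \nabla\Psi_2^s|$; using \eqref{solvedas} together with Lemma \ref{systemat} one obtains
$$\|\nabla\Psi_1^s - \nabla\Psi_2^s\|_{L^\infty} \leq \Lip(\nabla\chi) \int_0^s \MKd\bigl(\rho^u_{1,N;1},\rho^u_{2,N;1}\bigr)\,du.$$
Plugging back into Theorem \ref{thmpropwas}, squaring, applying Cauchy--Schwarz, and passing to $f(t) := \sup_{s\leq t}\MKd((\rho_1^s)_{N:1},(\rho_2^s)_{N:1})^2$ yields a closed Gronwall-type integral inequality whose solution produces the exponential factor $\Gamma_N(t)$ announced in the statement. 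Uniqueness follows immediately from the Dobrushin inequality by taking $\rhoi_N = \rod^{in}_N$.

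The principal obstacle is reconciling the initial-condition term appearing in Theorem \ref{thmpropwas}, namely the \emph{full} $N$-body distance $\MKd(\rhoi_1,\rhoi_2)^2$, with the desired right-hand side $\MKd((\rhoi_1)_{N:1},(\rhoi_2)_{N:1})^2$. Under the permutation invariance hypothesis, the symmetric nature of the dynamics lets one revisit the proof of Theorem \ref{thmpropwas} and track symmetric couplings throughout: the $1/N$ prefactor sitting in front of the $\vlau$-difference integral in that theorem is precisely what compensates the factor-of-$N$ loss incurred when restricting a symmetric $N$-body coupling to its one-particle marginal. Making this reduction rigorous, and identifying the explicit form of $\Gamma_N(t)$ as a function of $\Lip(\gen)$, $\Lip(\nabla\chi)$, $\eta$ and the support radius $R^t$, is where the remaining technical bookkeeping lives.
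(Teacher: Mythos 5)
Your proposal follows essentially the same route as the paper: existence and the support bound are obtained by repeating the time-discretization/compactness proof of Theorem \ref{thmvlasov} verbatim (the paper records this as Lemma \ref{commeprtn} and Corollary \ref{commeprtnk}, with the same constants $L',M',K'$), while the Dobrushin estimate and hence uniqueness come from Theorem \ref{thmpropwas} together with the fact that the two $N$-body fields differ only through $\eta(\nabla\Psi_1-\nabla\Psi_2)$, which is controlled by $\Lip(\nabla\chi)\int_0^s\MKd\bigl((\rho^u_{1,N})_{N:1},(\rho^u_{2,N})_{N:1}\bigr)\,du$ via \eqref{solvedas} and Lemma \ref{systemat}, followed by a Gr\"onwall argument giving $\Gamma_N(t)$ as in \eqref{defgamN}. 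The ``principal obstacle'' you flag, namely replacing the full $N$-body initial distance of Theorem \ref{thmpropwas} by the distance of the first marginals, is treated in the paper exactly along the lines you indicate: the permutation-invariant couplings of Lemma \ref{labsig} and the reduction of $D_N(t)$ to the one-particle marginal of the coupling in the appendix proof of Theorem \ref{thmpropwas} are what the paper invokes at this step.
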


\begin{proof}
The proof of Theorem \ref{thmliouvillevlasov} is easily attainable by a straightforward   modification of the one of Theorem \ref{thmvlasov}. 
 Let us define the vector $G$ with components $G_i,i=1,\dots, N$. This time, $G$ has the form
\be\label{defbvau}
G(t,Z)=\vlavn{t}{Z}{\rhoi}\in\bR^N
\ee
where we recall that $\histn{\rho^{in}}{t}: s\in[0,t]\to \rho^s$ solution of $(LV)$ with initial data $(\rho^{in})^{\otimes N}$.

One easily check that Lemme \ref{commeprt} still holds true for $\vlaun$ with the same constants $L',M',K'$.
\begin{Lem}\label{commeprtn}
For any $T\geq 0$, there exist $L'.M',K'<\infty$ such that, for any $t,t_1,t_2\leq T$, $z,z'\in\bR^{2d}$ and any $\rho^{in},\rho^{in}_1,\rho^{in}_2\in\cP(\bR^{2d})$,
\begin{eqnarray}
\|
\vlavn{t}{Z}{\rhoi}-
\vlavn{t}{Z'}{\rhoi}\|&\leq&L'\|Z-Z'\|,\ \nonumber\\
\|\vlavn{t}{Z}{\rhoi}\|&\leq & M'(1+\|Z\|)\nonumber\\
\|
\vlavn{t}{Z}{\rhoi_1}-
\vlavn{t}{Z}{\rhoi_2}
\|&\leq&
K' \sup_{
s\leq \min(t_1,t_2)}\MKu((\rho_{1N}^{s})_{N:1},(\rho_{2N}^{s})_{N:1})\nonumber\\
&&+\eta\|\nabla\chi\|_{L^\infty
}|t_1-t_2|
.\nonumber
\end{eqnarray}
where $\rho_{1N}^t,\rho_{2N}^t$ are the solutions of $(LV)$ with initial conditions $(\rhoi_1)^{\otimes N},(\rhoi_2)^{\otimes N}$.

Here $\MKu$ is the Wasserstein distance of order $1$ defined in Definition \ref{defwasun}.
\end{Lem}

For $T>0,k\in\bN$ we define $\ta_k=T2^{-k}$ and, with a slight abuse of notation, $\rho^t_k $ by $\rho^{t=0}_k=\rhoin$ and, for $l=0,\dots,2^k-1,\ u\in[0,\tau_k)$ (remember $Z:=(X,V)\in\bR^{2dN}$),

$$
(LV_{k}) \left\{
\begin{array}{l}
\partial_u\rho_k^{l\ta_k+u}(Z)+V\cdot\nabla_X\rho_k^{l\ta_k+u}(Z)=
\nabla_V\cdot(\vlavkn{t}{Z}{\rhoi}\rho_k^{l\ta_k+u}(Z)
\\ \\
\vlavkni{t}{(Y,W)}{\rhoi}
=\frac1N\sum\limits_{j=1}^N\gamma(w_i-w_j,y_i-y_j)+\eta\nabla_{z}\Psi^t(z)|_{z=y_i}
+F_{ext}(y_i),
\\ \\
\partial_s\Psi^s(z)=D\Delta_z\Psi-\kappa\Psi +g(z,(\rho^s_k)_{N;1}),\ s\in[0,t],
\\ \\
\end{array}
\right.\nonumber
$$
where $\histk{\rho^{in}}{t}: s\in[0,t]\to \rho^s_k$ solution of $(V_k)$ with initial data $\rho^{in}$.

As before, $\vlaun^k$ satisfies the same estimates than $\vlaun$ and we have the following result.

\begin{Cor}\label{commeprtnk}
For any $T\geq 0$, $k=1,\dots,N$,  $t,t_1,t_2\leq T$, $Z,Z'\in\bR^{2dN}$ and  $\rho^{in},\rho^{in}_1,\rho^{in}_2\in\cP(\bR^{2d})$,,
\begin{eqnarray}
\|
\vlavkn{t}{Z}{\rhoi}-
\vlavkn{t}{Z'}{\rhoi}\|&\leq&L'\|Z-Z'\|,\ 2\nonumber\\
\|\vlavkn{t}{Z}{\rhoi}\|&\leq & M'(1+\|Z\|)\nonumber\\
\|
\vlavkn{t}{Z}{\rhoi_1}-
\vlavkn{t}{Z}{\rhoi_2}
\|&\leq&
K' \sup_{
s\leq \min(t_1,t_2)}\MKu((\rho_{1k}^{s})_{N:1},(\rho_{2k}^{s})_{N:1})\nonumber\\
&&+\eta\|\nabla\chi\|_{L^\infty
}|t_1-t_2|
.\nonumber
\end{eqnarray}
where $\rho_{1k}^t,\rho_{2k}^t$ are the solutions of $(LV_k)$ with initial conditions $(\rhoi_1)^{\otimes N},(\rhoi_2)^{\otimes N}$.
\end{Cor}

At this point, we remark that  the proof of the existence of the solution and the estimate on the size of its support in Theorem \ref{thmvlasov} uses only the content of Corollary \ref{commeprtk}. Since Corollary \ref{commeprtnk} holds true with the same constants $L',M',K'$,  we conclude  that the proof of existence and size of the support in  Theorem \ref{thmliouvillevlasov} is exactly the same Therefore we omit it.


Uniqueness of the solution of $(LV)$ is again  a consequence of 
the Dobrushin stability result for the system $(LV)$ whose proof is  a straightforward adaptation  as the one of Theorem  \ref{thmvlasov}:

Take two initial conditions $\rhoi_{lN},l=1.2$ for $(LV)$.

By Theorem \ref{thmpropwas}, the same argument as at  the end of the proof  Theorem  \ref{thmvlasov} and  the third inequality in Lemma \ref{commeprtn} we get easily  that
\begin{eqnarray}
\MKd((\rho_N^t)_{N:1},(\rod_N^t)_{N:1})^2&\leq&
e^{\int_0^tL_N(s)ds}
\MKd((\rho_N^{in})_{N:1},(\rod_N^{in})_{N:1})^
2\nonumber\\
&+&\int_0^t
e^{\int_s^tL_N(u)du}(K')^2
\sup_{u\leq s}
\MKd((\rho_N^u)_{N:1},(\rod_N^u)_{N:1})^2
ds\
\nonumber
\end{eqnarray}
with
\begin{eqnarray}\label{lvlasovN}
L_1(t)&=&2+
2(L')^2.
\end{eqnarray}
By the Gr\"onwall Lemma again we get
$$
\MKd((\rho_N^t)_{N:1},(\rod_N^t)_{N:1})^2\leq 2e^{\Gamma_N(t)}\MKd((\rhoi_N)_{N:1},(\rod_N^{in})_{N:1})^2
$$

with
\be\label{defgamN}
\Gamma_N(t):=
t\left(
2+2(L')^2+(2K')^2e^{2t)(1+(L')^2)}\right)
\ee
and Theorem \ref{thmliouvillevlasov} is proved.
\end{proof}
\begin{Rmk}\label{rmkgammaN}
$\Gamma_N$ is independent from the two initial data $\rhoi_N$ and $\rod_N^{in}$.
\end{Rmk}
\begin{appendix}
\section{Proof of Theorem \ref{thmpropwas}}\label{proofthmpropwas}
We will denote $X=(x_1,\dots,x_N),V=(v_1,\dots,v_N),Y=(y_1,\dots,y_N), \Xi=(\xi_1,\dots,\xi_N)$, all of them belonging to $\bR^{2dN}$.

Let $\pi^{in}$  be an optimal coupling for $\rhoi_1,\rhoi_2$.
Obviously $\pi_N^{in}:=(\pi^{in})^{\otimes N}$ is a coupling for $(\rho^{in}_1)^{\otimes N},(\rho^{in}_2)^{\otimes N}$.

{Let moreover  $\rho_i^t
,\ i=1,2,$  be two   solutions  of the equations \eqref{twoeq} and let $\pi_N^{t}
$ be the unique (measure) solution to the following linear transport equation}
\begin{eqnarray}\label{lintrans}\nonumber
&&\partial_t\pi_N+V\cdot\nabla_X\pi_N+\Xi\cdot\nabla_Y\pi_N=
\nabla_V\cdot(\vlau_1([\rhoi_1]^{\leq t},X,V)\pi^t_N+\nabla_\Xi\cdot(\vlau_2([\rhoi_2]^{\leq t},Y,\Xi)\pi^t_N)
\end{eqnarray}
with $\pi_N^{t=0}=\pi_N^{op}
\in\cP_c^p(\bR^{2dN})^{\otimes 2}
$ 
optimal coupling between $\rhoi_1$ and $\rhoi_2$ invariant by permutation in the sense that, for all $\sigma\in\Sigma_N$,
$$
\pi^{in}_N(\sigma(dZ_1),\sigma(dZ_2))=\pi^{in}_N.
$$

The following first Lemma is equivalent to  \cite[Lemma 3.1]{rt}. It consists  in evolving  $\pi^{in}_N$ by the two dynamics of $\rho^t_i,\ i=1,2$. 
The proof is very similar to the one of \cite[Lemma 3.1]{rt}.

\begin{Lem}\label{lemone}

For all $t\in\bR$, $\pi_N^{t}$ is a coupling between $\rho^{t}_1$ and $\rho^t_2$.
\end{Lem}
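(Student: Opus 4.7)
The plan is to verify that the two marginals of $\pi_N^t$ coincide, for every $t\geq 0$, with $\rho_1^t$ and $\rho_2^t$ respectively. This will follow by showing that each marginal satisfies the same transport equation as the corresponding $\rho_i^t$ with the same initial datum, and then invoking uniqueness.

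First, I would introduce the first marginal
\begin{equation*}
\bar\pi_1^t(X,V) := \int_{\bR^{2dN}} \pi_N^t(X,V,Y,\Xi)\,dY\,d\Xi.
\end{equation*}
Testing \eqref{lintrans} against a smooth compactly supported function $\varphi=\varphi(X,V)$ independent of $(Y,\Xi)$, the two terms $\Xi\cdot\nabla_Y\pi_N^t$ and $\nabla_\Xi\cdot(\vlau_2\,\pi_N^t)$ vanish after integration by parts in $(Y,\Xi)$, because $\partial_Y\varphi=0$ and $\partial_\Xi\varphi=0$. Integrating by parts in $(X,V)$ in the two remaining terms produces exactly the weak form of
\begin{equation*}
\partial_t\bar\pi_1^t + V\cdot\nabla_X\bar\pi_1^t = \nabla_V\cdot\bigl(\vlau_1([\rhoi_1]^{\leq t},X,V)\,\bar\pi_1^t\bigr),\qquad \bar\pi_1^0 = \rhoi_1,
\end{equation*}
the initial condition being inherited from the fact that $\pi_N^{in}$ is a coupling of $\rhoi_1$ and $\rhoi_2$.

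The key observation is that once we regard $[\rhoi_1]^{\leq t}$ as the \emph{given} history of the already-determined solution $s\mapsto \rho_1^s$ of \eqref{twoeq}, the equation above becomes \emph{linear} in the unknown $\bar\pi_1^t$. Its drift $\vlau_1([\rhoi_1]^{\leq t},X,V)$ is Lipschitz in $(X,V)$ with at most linear growth in $V$, so the associated characteristics generate a unique Lipschitz flow $\Phi_{0,t}$ on $\bR^{2dN}$, and the unique measure solution of the linear Cauchy problem in $C^0(\bR^+,\cP^p_c(\bR^{2dN}))$ is the push-forward $\Phi_{0,t}\#\rhoi_1$. Since $\rho_1^t$ itself solves this linear equation with initial condition $\rhoi_1$ (it satisfies \eqref{twoeq}, which coincides with the linear equation once the history is frozen), uniqueness yields $\bar\pi_1^t=\rho_1^t$ for every $t\geq 0$.

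The argument for the second marginal $\bar\pi_2^t(Y,\Xi):=\int_{\bR^{2dN}}\pi_N^t\,dX\,dV$ is entirely symmetric and yields $\bar\pi_2^t=\rho_2^t$, so $\pi_N^t$ is a coupling of $\rho_1^t$ and $\rho_2^t$. The only mildly delicate point is the justification of uniqueness for the measure-valued linear transport equation, but this is classical given the Lipschitz regularity of $\vlau_1,\vlau_2$ in $(X,V)$, so no real obstacle is expected.
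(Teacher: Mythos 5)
Your proof is correct and follows essentially the same route as the paper: project the equation \eqref{lintrans} for $\pi_N^t$ onto each factor, identify the resulting equation and initial datum with those satisfied by $\rho_1^t$ (resp.\ $\rho_2^t$), and conclude by uniqueness. You are in fact a bit more careful than the paper's one-line argument, since you make explicit that the relevant uniqueness is for the \emph{linear} transport equation with the frozen history $[\rhoi_i]^{\leq t}$ as given Lipschitz drift (settled by the characteristic flow and the push-forward representation), which is the step the paper leaves implicit when it appeals to ``unicity of these solutions.''
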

\begin{proof}
One easily check that the two marginals of $\pi^t_N$ satisfy the two equations \eqref{twoeq}. Therefore, the Lemma holds true by unicity of these solutions. 
\end{proof}

{\bf In order to shorten a bit the notations, we will use in the sequel the following abuses of notation $\vlau_i(t,X,V):=\vlau_i([\rhoi_i]^{\leq t},X,V)$ and  $\vlau^t_i=\vlau_i(t,\cdot,\cdot)$.}
\vskip 0.5cm
By a slight modification of the proof of \cite[Lemma 3.2]{rt} we arrive easily to the following. 

\begin{Lem}\label{lemtwo}
Let 
\begin{eqnarray}
D_N(t)&:=&
\frac1N\int
 ((X-Y)^2+(V-\Xi)^2)
\pi_N^{t}(dXdVdYd\Xi)\,.\nonumber
\end{eqnarray}
Then
\begin{eqnarray}
\frac{dD_N}{dt}&\le&
{ L_1}(t) D_N
+
\frac{1}N\int_{]bR^{2dN}}
|\vlau_1(t,Y,\Xi)-\vlau_2(t,Y,\Xi)|^2
\rho_2^t(dY,d\Xi)
\nonumber
\end{eqnarray}
and
\begin{eqnarray}
\frac{dD_N}{dt}&\le&
{ L_2}(t) D_N
+
\frac{1}N\int_{]bR^{2dN}}
|\vlau_1(t,X,V)-\vlau_2(t,X,V)|^2
\rho_1^t(dX,dV),
\nonumber
\end{eqnarray}
where 
\begin{eqnarray}
L_1(t)&=&
2(1+ \sup_{(X,V)\in\ supp(\rho_1^t)}
(\Lip(\vlau_1^t
)_{(X,V)})^2),\nonumber\\
L_2(t)&=&
2(1+ \sup_{(Y,\Xi)\in\ supp(\rho_2^t)}
(\Lip(\vlau_2^t
)_{(Y,\Xi)})^2).\nonumber
\end{eqnarray}
\end{Lem}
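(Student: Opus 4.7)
The plan is to differentiate $D_N(t)$ in time using the distributional form of the linear transport equation satisfied by $\pi_N^t$, tested against the weighted squared distance
\[
\phi(X,V,Y,\Xi)=\tfrac1N\bigl(|X-Y|^2+|V-\Xi|^2\bigr).
\]
Although $\phi$ is not compactly supported, Lemma \ref{lemone} combined with the hypothesis that \eqref{twoeq} preserves $\cP_c^p(\bR^{2dN})$ guarantees that $\pi_N^t$ itself has compact support (contained in $supp(\rho_1^t)\times supp(\rho_2^t)$), so a routine truncation argument legitimates the formal computation.

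First I would note that $\nabla_X\phi=\tfrac{2}{N}(X-Y)=-\nabla_Y\phi$ and $\nabla_V\phi=\tfrac{2}{N}(V-\Xi)=-\nabla_\Xi\phi$. Plugging these into the weak form of the transport equation, the transport-in-position part contributes the cross term $(V-\Xi)\cdot(X-Y)$ while the velocity-transport part produces $\bigl(\vlau_2(t,Y,\Xi)-\vlau_1(t,X,V)\bigr)\cdot(V-\Xi)$, yielding
\[
\frac{dD_N}{dt}=\frac{2}{N}\int\Big[(V-\Xi)\cdot(X-Y)+\bigl(\vlau_2(t,Y,\Xi)-\vlau_1(t,X,V)\bigr)\cdot(V-\Xi)\Big]\pi_N^t(dXdVdYd\Xi).
\]
A Young inequality on the cross term gives $\tfrac12\|(X-Y,V-\Xi)\|^2$, which after integration contributes a pure $D_N$ piece.

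To obtain the first inequality I would treat the velocity-field term by the add-and-subtract
\[
\vlau_2(t,Y,\Xi)-\vlau_1(t,X,V)=\bigl(\vlau_2(t,Y,\Xi)-\vlau_1(t,Y,\Xi)\bigr)+\bigl(\vlau_1(t,Y,\Xi)-\vlau_1(t,X,V)\bigr).
\]
A Young step on the first summand dotted with $(V-\Xi)$ yields $\tfrac12|\vlau_2-\vlau_1|^2(t,Y,\Xi)+\tfrac12|V-\Xi|^2$; the decisive observation is that the $|\vlau_2-\vlau_1|^2$ contribution depends only on $(Y,\Xi)$, so by Lemma \ref{lemone} (the second marginal of $\pi_N^t$ equals $\rho_2^t$) the $(X,V)$ variables integrate out and reproduce exactly the announced error integral $\tfrac1N\int|\vlau_1-\vlau_2|^2(t,Y,\Xi)\rho_2^t(dY,d\Xi)$. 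The second summand is estimated via the Lipschitz bound on $\vlau_1(t,\cdot,\cdot)$ over $supp(\rho_1^t)$, giving $|\vlau_1(t,Y,\Xi)-\vlau_1(t,X,V)|\le\Lip(\vlau_1^t)\|(X-Y,V-\Xi)\|$; a further Young step then turns its dot product with $(V-\Xi)$ into a constant multiple of $\bigl(1+\Lip(\vlau_1^t)^2\bigr)\|(X-Y,V-\Xi)\|^2$ that is absorbed into $D_N$.

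Summing the three contributions, multiplying by $\tfrac2N$ and bounding $|V-\Xi|^2\le\|(X-Y,V-\Xi)\|^2$, the pure-distance terms assemble into $L_1(t)D_N(t)$ with $L_1$ of the stated form, proving the first inequality. The second inequality follows by the mirror splitting $\vlau_2(t,Y,\Xi)-\vlau_1(t,X,V)=\bigl(\vlau_2(t,X,V)-\vlau_1(t,X,V)\bigr)+\bigl(\vlau_2(t,Y,\Xi)-\vlau_2(t,X,V)\bigr)$, which re-centers all estimates at $(X,V)$ so that integrating out $(Y,\Xi)$ against the first marginal of $\pi_N^t$ produces $\rho_1^t$ and invokes $\Lip(\vlau_2^t)$ on $supp(\rho_2^t)$. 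The one genuinely delicate step — more a matter of careful bookkeeping than a real obstacle — is tracking the constants through the successive Young inequalities so that the Lipschitz contributions collapse exactly into $L_i(t)=2\bigl(1+\sup\Lip(\vlau_i^t)^2\bigr)$.
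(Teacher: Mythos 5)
Your proposal is correct and follows essentially the same route as the paper: differentiate $D_N$ along the coupled transport equation \eqref{lintrans}, apply Young's inequality, add-and-subtract $\vlau_1(t,Y,\Xi)$, use the Lipschitz bound of $\vlau_1^t$ and the fact (Lemma \ref{lemone}) that the second marginal of $\pi_N^t$ is $\rho_2^t$ (and the mirror argument for the second inequality). The only deviation is the order in which you apply Young's inequality (before rather than after squaring the field difference), which yields a coefficient $3+\Lip(\vlau_1^t)^2$ in front of $D_N$ instead of exactly $2(1+\Lip(\vlau_1^t)^2)$ — a harmless bookkeeping discrepancy of the same nature as in the paper's own proof, which in turn produces the error term with constant $\tfrac2N$ rather than the $\tfrac1N$ displayed in the statement.
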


\begin{proof}
As already mentioned, the proof is very similar to the one of \cite[Lemma 3.2]{rt}. Plugging into the definition of $D_N(t)$ the equation \eqref{lintrans} satisfied by $\pi_N^t$, integrating by part  and  using the fact that $2U\cdot V\leq U^2+V^2, \forall\ U,V\in\bR^{2d}$ (see the proof of \cite[Lemma 3.2]{rt} for details), we get
\begin{eqnarray}
\frac{dD_N}{dt}
&=&
\frac1N\int
 ((X-Y)^2+(V-\Xi)^2)
\frac{d\pi_N^{t}}{dt}(dXdVdYd\Xi)\,.\nonumber\\
&\le&\frac1N
\int_{\bR^{4dN}} \left(2(|X-Y|^2+|V-\Xi|^2){\color{white}\frac{1}N\sum_{j=1}^N}\right.\nonumber\\
&&\left.
{\color{white}\int}+
|\vlau_1(t,X,V)-\vlau_2(t,Y,\Xi)|^2
\right)\pi_N^t(dX,dV,dY,d\Xi)\nonumber\\
&\leq &
2D_N(t)+\frac2N\int_{\bR^{4dN}}
|\vlau_1(t,X,V)-\vlau_1(t,Y,\Xi)|^2
\pi_N^t(dX,dV,dY,d\Xi)\nonumber\\
&+&
\frac2N\int_{\bR^{4dN}}
|\vlau_1(t,Y,\Xi)-\vlau_2(t,Y,\Xi)|^2
\pi_N^t(dX,dV,dY,d\Xi)\nonumber\\
&\leq &
2D_N(t)\nonumber\\
&+&\frac2N
\sup_{(X,V,Y,\Xi)\in\ supp(\pi_N^t)}
(\Lip(\vlau_1^t
)_{(X,V)})^2
\int_{\bR^{4dN}}
(|X-Y|^2+|Y-\Xi|^2)
\pi_N^t(dX,dV,dY,d\Xi)\nonumber\\
&+&
\frac2N\int_{\bR^{4dN}}
|\vlau_1(t,Y,\Xi)-\vlau_2(t,Y,\Xi)|^2
\pi_N^t(dX,dV,dY,d\Xi)\nonumber\\
&\leq& 
L_1(t)D_N(t)+
\frac{2}N\int_{\bR^{2dN}}
|\vlau_1(t,Y.\Xi)-\vlau_2(t,Y,\Xi)|^2
\rho_2^t(dY,d\Xi).\nonumber
\end{eqnarray}

We conclude by the fact that 
\begin{eqnarray}
\sup\limits_{(X,V,Y,\Xi)\in supp(\pi^t_N)}(\dots)&\leq& 
\sup\limits_{(X,V)\in\ supp(\rho_1^t)}(\dots)\nonumber\\
&\leq&
\sup\limits_{(Y,\Xi)\in\ supp(\rho_2^t)}(\dots)\nonumber\
\end{eqnarray}
Indeed, (we write the argument in the case $\pi^t_N$ is a density for sake of simplicity)
\begin{eqnarray}
(X,V,Y,\Xi)\in supp(\pi^t_N)&\Rightarrow&\pi^t_N(X,V,Y,\Xi)>0\nonumber\\
&\Rightarrow&\int_{\bR^{2dN}}\pi^t_N(X,V,dY,d\Xi), \int_{\bR^{2dN}}\pi^t_N(dX,dV,Y,\Xi)>0\nonumber\\
&\Leftrightarrow& \rho_1^t(X,V),\ \rho_2^t(Y,\Xi)>0\nonumber\\
&\Leftrightarrow& (X,V)\in\ supp(\rho_1^t),\ (Y,\Xi)\in\ supp(\rho_2^t).\nonumber
\end{eqnarray}

The second inequality in Lemma \ref{lemtwo} is proved by exchanging $\rhoi_1$ and $\rhoi_2$.
\end{proof}

Therefore, by Gr\"onwall Lemma,
\begin{eqnarray}
D_N(t)&\leq& e^{\int_0^tL_1(s)ds}
D_N(0)\nonumber\\
&+&
\frac2N
\int_0^t\int_{\bR^{2dN}}|\vlau_1(s,Y,\Xi)-\vlau_2(s,Y,\Xi)| \rho_2^s(dY,d\Xi)e^{\int_s^tl_1(u)du}ds\label{oufff}
\end{eqnarray}
 
 Thanks to the following lemma, we can always suppose that $\pi^{in}_N$ is invariant by permutation in the sense of Theorem \ref{thmliouvillevlasov}.
 \begin{Lem}\label{labsig}
 Let $\rhoi_1(dZ_1),\rhoi_2(dZ_2)$ be invariant by permutation in the sense of 
 \eqref{defperm}. Then there exists an optimal coupling $\pi$ of $\rhoi_1,\rhoi_2$ invariant by permutation in the  sense that, for all $\sigma\in\Sigma_N$, $\pi\circ\sigma^{\otimes 2}=\pi$, that is
 $$
 \pi(\sigma(dZ_1,\sigma(Z_2))=\pi(dZ_1,dZ_2),\ \forall \sigma\in\Sigma_N.
 $$
 \end{Lem}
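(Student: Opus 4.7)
The plan is to construct the desired coupling by symmetrization. Starting from any optimal coupling $\pi_0$ of $\rhoi_1$ and $\rhoi_2$ (which exists by standard Wasserstein theory since both measures have finite second moments), I would define
$$
\pi := \frac{1}{N!} \sum_{\sigma \in \Sigma_N} (\sigma^{\otimes 2})_\# \pi_0,
$$
where $\sigma^{\otimes 2}$ denotes the map $(Z_1,Z_2)\mapsto(\sigma(Z_1),\sigma(Z_2))$ on $\bR^{2dN}\times\bR^{2dN}$, with $\sigma(Z)$ defined as in \eqref{defperm}. The candidate $\pi$ is then a convex combination of probability measures on $\bR^{2dN}\times\bR^{2dN}$, hence a probability measure itself.

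Next I would verify the three required properties in order. First, that $\pi$ couples $\rhoi_1$ and $\rhoi_2$: the first marginal of $(\sigma^{\otimes 2})_\#\pi_0$ equals $\sigma_\#\rhoi_1=\rhoi_1$ by the permutation invariance of $\rhoi_1$, and similarly for the second marginal; averaging preserves this. Second, that $\pi$ is diagonally invariant: for any $\tau\in\Sigma_N$,
$$
(\tau^{\otimes 2})_\#\pi \;=\; \frac{1}{N!}\sum_{\sigma\in\Sigma_N}((\tau\sigma)^{\otimes 2})_\#\pi_0 \;=\; \pi,
$$
by the change of variable $\sigma\mapsto\tau\sigma$ in the group $\Sigma_N$. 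This is exactly the required identity $\pi(\sigma(dZ_1),\sigma(dZ_2))=\pi(dZ_1,dZ_2)$.

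The final and decisive step is optimality. The key observation is that the quadratic cost $c(Z_1,Z_2)=\|Z_1-Z_2\|^2$ is invariant under the diagonal action of $\Sigma_N$, since permuting the coordinates of $Z_1$ and $Z_2$ simultaneously does not change the Euclidean norm of their difference. Therefore
$$
\int c\, d\bigl((\sigma^{\otimes 2})_\#\pi_0\bigr) \;=\; \int c(\sigma(Z_1),\sigma(Z_2))\,\pi_0(dZ_1,dZ_2) \;=\; \int c\, d\pi_0
$$
for every $\sigma\in\Sigma_N$, and averaging yields $\int c\, d\pi = \int c\, d\pi_0 = \MKd(\rhoi_1,\rhoi_2)^2$. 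Since $\pi$ is a coupling achieving the infimum, it is optimal.

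I do not anticipate a serious obstacle: the argument is a straightforward group-averaging trick exploiting the symmetry of the cost function and the permutation invariance of the marginals. The only subtlety is to be careful that symmetrization preserves the marginals (which requires the invariance of $\rhoi_1,\rhoi_2$, as assumed) and that the cost is invariant under the \emph{diagonal} action rather than the independent action (which is what makes the argument work and is specific to the choice of $c$).
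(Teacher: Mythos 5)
Your proposal is correct and follows essentially the same symmetrization argument as the paper: push an optimal coupling forward by each diagonal permutation (using the invariance of the marginals and of the quadratic cost under the diagonal action) and average over $\Sigma_N$. No gaps.
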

 \begin{proof}
 Let $\pi^I(dZ_1,dZ_2)$ an optimal coupling of $\rhoi_1,\rhoi_2$. It is straightforward to show that, for all $\sigma\in\Sigma_N$,
 $$
 \pi^\sigma(dZ_1,dZ_2):=\pi^I(\sigma(dZ_1),\sigma(dZ_2))
 $$ 
 is also an optimal coupling of $\rhoi_1,\rhoi_2$, since $\rhoi_1,\rhoi_2$ and the cost function is also invariant by permutation:
 $$
 |Z_1-Z_2|^2=|\sigma(Z_1)-\sigma(Z_2)|^,\ \forall \sigma\in \Sigma_N.
 $$
 Therefore 
 $$
 \pi(dZ_1,dZ_2):=\frac1{N!}\sum_{\sigma\in\Sigma_N}\pi^\sigma
 $$
 is an optimal coupling of $\rhoi_1,\rhoi_2$, and is invaraint by permutation.
 \end{proof}
We now remark that, since both $\pi^{in}_N$ and $\vlau_1,\vlau_2$ are invariant by permutations of the variables $(x_j,v_j),\ j=1,\dots, N$, so is $\pi^t_N$ for all $t\in\bR$. This implies that, in fact,
\begin{eqnarray}
D_N(t)&=&
\frac1N\int
 ((X-Y)^2+(V-\Xi)^2)
\pi_N^{t}(dXdVdYd\Xi)\,.\nonumber\\
&=&\frac1N\sum_{i=1}^N\int
 ((x_i-y_i)^2+(v_i-\xi_i)^2)
\pi_N^{t}(dXdVdYd\Xi)\,.\nonumber\\
&=&\int
 ((x_1-y_1)^2+(v_1-\xi_1)^2)
\pi_N^{t}(dXdVdYd\Xi)\,.\nonumber\\
&=&\int_{\bR^{2d}}
(|x-v|^2+|y-\xi|^2)(\pi^t_{N})_{1}(dxdv),\nonumber
\end{eqnarray}
where $(\pi^t_{N})_{1}$ is the measure on $\bR^{2d}\times \bR^{2d}$ defined, for every test function $\varphi(x,v;y,\xi)$, by
$$
\int _{\bR^{2dN}\times\bR^{2dN}}\varphi(x_1,v_1,y_1,\xi_1)\pi_N(dXdVdYd\Xi)
=
\int_{\bR^{2d}\times\bR^{2d}}\varphi(x,v,y,\xi)(\pi^t_{N})_{1}(dx,dv,dy,d\xi).
$$
Moreover,  
$(\pi^t_{N})_1$ is a coupling between $(\rho^t_1)_{N:1}$ and $(\rho^t_2)_{N:1}$. Indeed, for any test functions $\psi$ and $ \varphi$,
\begin{eqnarray}
&&\int_{\bR^{2d}\times\bR^{2d}}(\psi(x,v)+\varphi(y,\xi))(\pi^t_{N})_{1}(dx,dv,dy,d\xi)\nonumber\\
&=&
\int _{\bR^{2dN}\times\bR^{2dN}}(\psi(x_1,v_1)+\varphi(y_1,\xi_1))\pi_N(dX,dV,dY,d\Xi)\nonumber\\
&=&
\int _{\bR^{2dN}}\psi(x_1,v_1)\rho_1^t(dX,dV)
+
\int _{\bR^{2dN}}\varphi(y_1,\xi_1)\rho_2^t(dY,d\Xi)\nonumber\\
&=&
\int _{\bR^{2d}}\psi(x,v)(\rho_1^t)_{N,1}(dx,dv)
+\int _{\bR^{2d}}\varphi(y,\xi))(\rho_2^t)_{N,1}(dy,d\xi)
\nonumber
\end{eqnarray}
Consequently,  one has
\begin{eqnarray}\nonumber
\MKd((\rho^t_1)_{N:1},(\rho^t_2)_{N:1})
&:=&
\inf_{\pi\ coupling\ (\rho^t_1)_{N:1}\ and \ (\rho^t_2)_{N:1}}\int (|x-v|^2+|y-\xi|^2)\pi(dxdydvd\xi)\nonumber\\
&\leq & \int_{\bR^{2d}}
(|x-v|^2+|y-\xi|^2)(\pi^t_{N})_{1}(dxdydvd\xi)=D_N(t)\nonumber
\end{eqnarray}
and the conclusion follows by \eqref{oufff}.


\end{appendix}

\vskip 1cm
\textbf{Acknowledgements}. We warmly thank Francesco Rossi for helpful discussions. The work of TP was partly supported by LIA LYSM (co-funded by
AMU, CNRS, ECM and INdAM) and IAC (Istituto per le applicazioni del calcolo "Mauro Picone").
\vskip 1cm

\vskip 1cm

\end{document}